\documentclass[11pt,english]{article}%

\usepackage[letterpaper,top=2cm,bottom=2.5cm,left=3cm,right=3cm,marginparwidth=1.75cm]{geometry}

\usepackage{authblk}
\usepackage{orcidlink}

\usepackage[utf8]{inputenc}
\usepackage[english]{babel}

\usepackage[backend=biber]{biblatex}
\usepackage{graphicx}%
\usepackage{multirow}%
\usepackage{amsmath,amssymb,amsfonts}%
\usepackage{amsthm}%
\usepackage[title]{appendix}%
\usepackage{xcolor}%
\usepackage{textcomp}%
\usepackage{manyfoot}%
\usepackage{booktabs}%
\usepackage{algorithm}%
\usepackage{algorithmicx}%
\usepackage{algpseudocode}%
\usepackage{listings}%
\usepackage{mathrsfs}
\newtheorem{theorem}{Theorem}

\newtheorem{proposition}[theorem]{Proposition}%

\newtheorem{lemma}[theorem]{Lemma}
\newtheorem{remark}{Remark}%
\newtheorem{definition}{Definition}%

\providecommand{\keywords}[1]
{
  \small{\textit{Keywords:}} #1
}
\providecommand{\MSC}[1]
{
  \small{\textit{MSC Classification:}} #1
}
\usepackage{comment}
\usepackage[normalem]{ulem}
\newtheorem{thmy}{Theorem}

\begin{document}

\title{Puzzle Pieces, Bi-accessibility, and Connectivity of the Julia Set for Generalized Blaschke Products}

\author{{Melida Carranza Trejo}\orcidlink{0000-0001-5223-1300}}
\author{{M\'onica Moreno Rocha}\orcidlink{0000-0003-3816-4425}}

\affil{Centro de Investigaci\'on en Matem\'aticas, Guanajuato, 36023, Guanajuato,  M\'exico\\ (melida.carranza@cimat.mx, mmoreno@cimat.mx)}

\maketitle

\abstract{We study the dynamics of a parametric family of rational functions of odd degree, where each function is a generalized Blaschke product that maps the unit circle onto itself. 
The action of the Blaschke product restricted to the unit  circle defines a circle map, and the parameter space of the family exhibits Arnold tongues. As the parameter varies over an Arnold tongue, the action of the circle map changes from a diffeomorphism to a non-injective endomorphisms. Using a combinatorial study of puzzle pieces, we show that for adjacent parameters inside the Arnold tongues, there exist \emph{bi-accessible} repelling cycles. This topological feature enables us to exclude the presence of multiply connected Fatou components whenever Herman rings are absent. As a result, we obtain a complete characterization of the connectivity of the Julia set for each member of the parametric family.}

\vspace{0.2cm}

\keywords{Blaschke product, Julia set, Arnold tongues, connectivity}

\MSC{37F10, 37F20, 37F46}

\section{Introduction}\label{sec:intro}
The study of the connectivity of Julia sets is a central theme in holomorphic dynamics and, under certain conditions, it may be closely related to the behavior of critical orbits. For example, consider the iteration of a complex polynomial of degree at least 2: if the orbits of every finite critical point remains bounded, then its Julia set is connected, \cite{Milnor2011}. For the case of a rational function $f:\widehat{\mathbb{C}}\to \widehat{\mathbb{C}}$ of degree $d\geq 2$, its Julia set, $\mathcal{J}(f)$, is a compact set of the Riemann sphere, thus, $\mathcal{J}(f)$ is connected if and only if all its Fatou components are simply connected. Establishing the connectivity of every Fatou component is far more intricate in the rational case, since either Herman rings may occur, \cite{Herman1979}, or an immediate basin (either attracting or parabolic) may contain more than one critical point, which could result in a component of infinite connectivity (see for example \cite[Propositon 3.1]{Canela2020}).

In this paper, we analyze the connectivity of the Julia set for a specific family of rational maps known as generalized Blaschke products. While classical finite Blaschke products are endomorphisms of the unit disk $\mathbb{D}$ that map the unit circle $\mathbb{S}^1$ to itself, we consider a broader family $B_{c,t}$ defined by:
\begin{equation}
    B_{c,t}(z) = e^{2\pi i t} z^{d+1} \left( \frac{z-c}{1-\overline{c}z} \right)^d,
    \label{eq:intro_blaschke}
\end{equation}
where $d \in \mathbb{N}$, $c \in \mathbb{C}$ and $t\in\mathbb{R}$. These rational maps have degree $2d+1$ and hence, they have $4d$ critical points (counting multiplicity) with two of them at $0$ and $\infty$, which are also fixed under $B_a$ for all $a\in \mathbb{C}$. And once more, each $B_a$ sends $\mathbb{S}^1$ onto itself, although its is no longer backward invariant. Through a conformal conjugacy (see Lemma~\ref{lemma:conjugacy-t-to-0}) we can reduce the study of the bi-parametric family $B_{c,t}$ to a single complex parameter family, simply denoted as $B_a$.

The dynamics of $B_a$ is highly influenced by the position of its critical points relative to $\mathbb{S}^1$. We identify three distinct dynamical regions based on the modulus $|a|$: a region with simple dynamics, where the Julia set is the circle itself; an endomorphism region where the restriction $B_a|_{\mathbb{S}^1}$ acts as a degree-one circle map with critical points; and a homeomorphism or diffeomorphism region where the map on the circle is invertible. This behavior allows us to employ techniques from the theory of circle maps, specifically the concept of Arnold tongues, to organize the parameter space.

A brief overview of the theory of circle maps and holomorphic dynamics can be found in Section~\ref{sec:preliminaries}. In Section \ref{sec:TheBlaschke}, we provide a study  of the critical orbits and the parameter space of the family $B_a$. We show in Theorem~\ref{thm:rotation-domains} that the family $B_a$ does not realize  Siegel disks and that Herman rings cannot occur for parameters satisfying $|a| \le 2d+1$. Furthermore, we establish in Theorem~\ref{thm:super-attracting-basins} the simple connectivity of the immediate basins of the super-attracting fixed points at $0$ and $\infty$.

The technical part of the paper is presented in Section \ref{sec:biaccessibility}, where we investigate the topology of periodic Fatou components associated with parameters in the Arnold tongues. Adapting the puzzle piece techniques of Yoccoz and Branner-Hubbard, we construct a sequence of nested compact sets formed by preimages of a fundamental domain (referred to as \emph{drops}, after \cite{Petersen2004}). Our first main result, Theorem~\ref{thm:biaccessibility}, establishes that, for adjacent parameters within the tongues, the periodic cycles in $\mathbb{S}^1\cap \mathcal{J}(B_a)$ are \textit{bi-accessible}, that is, accessible from the immediate basins of $0$ and $\infty$ (for a precise definition, consult Section~\ref{sec:biaccessibility}).

\begin{thmy}\label{thm:biaccessibility}
Let $(r, \alpha)$ be an adjacent parameter belonging to a tongue $T_{p/q}$, with $0<p/q<1$ in lowest terms. Then the map $B_{r,\alpha}$ has a bi-accessible cycle in its Julia set with rotation number $\rho=p/q$.
\end{thmy}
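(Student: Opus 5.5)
The plan is to argue, in the spirit of Petersen's puzzle construction for Blaschke models of circle maps, by means of drops attached to a repelling cycle on $\mathbb{S}^1$. Let $(r,\alpha)$ be an adjacent parameter in $T_{p/q}$. By the circle-map theory recalled in Section~\ref{sec:preliminaries}, the restriction $B_{r,\alpha}|_{\mathbb{S}^1}$ has rotation number $p/q$ and admits periodic orbits of period $q$. Since $0$ and $\infty$ are super-attracting and $\mathbb{S}^1$ separates their basins, the circle carries no attracting cycle whose basin could absorb the cycle we choose. We therefore select a repelling cycle $\{x_0,\dots,x_{q-1}\}\subset\mathbb{S}^1\cap\mathcal{J}(B_{r,\alpha})$. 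The natural candidate is the ``outer'' periodic orbit at the boundary of a lifted interval of periodic points, or the unique cycle that persists as repelling at adjacent parameters; I expect that the definition of adjacency is designed precisely to guarantee that such a repelling cycle exists and that the critical points on the circle do not land on it. The goal is then to build a curve from $x_0$ into $A(0)$ and its reflection $z\mapsto 1/\bar z$ into $A(\infty)$. The reflection symmetry $B_a(1/\bar z)=1/\overline{B_a(z)}$ makes the second access automatic once the first is constructed.

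To construct the access, I would linearize at the repelling point $x_0$ for $B^q$ and take a small arc $\gamma_0$ inside $A(0)$ that ends near $x_0$. This arc can be chosen from an internal ray (Böttcher coordinate) segment in $A(0)$, which is simply connected by Theorem~\ref{thm:super-attracting-basins}. I would then pull back by the inverse branch of $B^q$ fixing $x_0$, chosen so that the pieces concatenate: $\gamma_{n+1}=g(\gamma_n)$, where $g$ is the local inverse near $x_0$. The standard argument is the following. Take a periodic internal ray of $A(0)$ of period $q$ (it exists because $B|_{\partial A(0)}$ has combinatorial rotation $p/q$, inherited from the degree-one structure). Its landing point is then a fixed point of $B^q$ in $\partial A(0)$, and this landing point must be one of the cycle points on $\mathbb{S}^1$. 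The puzzle/drop machinery is needed precisely to prove this landing and the identification.

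The main obstacle is showing that the periodic ray actually lands at $x_0$ rather than accumulating on a continuum. I would handle this with the drops. Define the fundamental domain bounded by the ray segments and $\mathbb{S}^1$ arcs between consecutive cycle points, then take its iterated preimages to get nested drops $D_n(x_0)$ containing the tail of the ray. Next, show the diameters shrink to zero: the relevant inverse branches of $B^q$ avoid the critical values, because in the adjacent case the critical orbits on $\mathbb{S}^1$ are separated combinatorially from $x_0$ and the off-circle critical points lie in $A(0)\cup A(\infty)$ or away from the drops. With that, a Poincaré-metric contraction (hyperbolic metric of the complement of the postcritical closure) gives shrinking. I would try this contraction first, falling back on a Yoccoz-type modulus argument on annuli between successive drops if the postcritical set comes too close. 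Once shrinking holds, the ray lands at a fixed point of $B^q$ in the nested intersection, which is $x_0$ by construction, and repelling-ness rules out any parabolic alternative. Applying the reflection then gives access from $A(\infty)$. Finally, applying $B$ transports both accesses to every point of the cycle, and the cycle has rotation number $p/q$ because it is a periodic orbit of the degree-one circle map in $T_{p/q}$.
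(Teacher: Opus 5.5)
\textbf{There is a genuine gap: the proposal assumes the key identification instead of proving it.}

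Your reduction via the reflection is sound. Since $\mathcal{I}(z)=1/\bar z$ fixes $\mathbb{S}^1$ pointwise and carries internal rays of $\mathcal{A}^*(0)$ to rays of $\mathcal{A}^*(\infty)$, any point of $\mathbb{S}^1\cap\partial\mathcal{A}^*(0)$ is automatically bi-accessible. But then the whole theorem is the claim that some $p/q$-cycle on $\mathbb{S}^1$ lies on $\partial\mathcal{A}^*(0)$. That is exactly what you assume when you write that the landing point of a period-$q$ internal ray ``must be one of the cycle points on $\mathbb{S}^1$.'' Nothing forces this:
\begin{itemize}
\item The map $B|_{\partial\mathcal{A}^*(0)}$ is conjugate to $m_{d+1}$, not to a degree-one circle map, so it has no ``combinatorial rotation $p/q$'' inherited from the tongue.
\item By Theorem~\ref{thm:Goldberg}, $m_{d+1}$ has rotation cycles with rotation number $p/q$ at every parameter, not only in $T_{p/q}$. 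For $d\ge 2$ there are several of them, one per admissible deployment vector, and your argument never says which one should land on the circle.
\item A priori $\partial\mathcal{A}^*(0)$ need not meet $\mathbb{S}^1$ at all. For example, a loop in a multiply connected basin of the attracting cycle could surround $\mathcal{A}^*(0)$ inside $\mathbb{D}$. This is precisely the situation that Theorem~\ref{thm:main-connectedness} uses bi-accessibility to exclude.
\end{itemize}
Landing itself is not the real difficulty: at these hyperbolic parameters the B\"ottcher coordinate extends to the boundary, so every internal ray lands. The difficulty is \emph{where} it lands. Note also that, contrary to your opening remark, at adjacent parameters the circle does carry an attracting cycle, and its immediate basin contains both free critical points.

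Your drop construction cannot supply the identification, because it is purely local at the pre-selected point $x_0$. A domain ``bounded by ray segments and $\mathbb{S}^1$ arcs between consecutive cycle points'' does not close up unless the rays already reach the circle. Likewise, pulling back along the branch of $B^{-q}$ fixing $x_0$ only controls the tail of the ray if you already know the ray enters that domain. So the argument is circular.

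The missing idea is a global separating structure together with the right choice of cycle. The paper proceeds as follows.
\begin{itemize}
\item Drops are preimages of the component $U$ of $B^{-1}(\mathbb{D})$ outside $\overline{\mathbb{D}}$, whose closure touches $\mathbb{S}^1$. The limbs $L_{\overline{0}}$ and $L_{\overline{d-1}}$ of nested drops shrink to the fixed landing points of $R^\infty_0$ and $R^\infty_{(d-1)/d}$.
\item These extended rays, an equipotential, and their mirror images cut out a symmetric sector $W$ meeting $\mathbb{S}^1$. Preimages of the extended rays then cut $W$ into puzzle pieces.
\item One takes the $p/q$ rotation cycle of $m_{d+1}$ inside $[(d-1)/d,1)$ and the interval of Proposition~\ref{prop:gen_interval}. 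This gives a piece $P_0$, with a neighbourhood of the attracting cycle removed when needed, whose pullback $P_q$ under $B^q$ satisfies $\overline{P_q}\subset P_0$.
\item The inverse branch $P_0\to P_q$ is a strict Poincar\'e contraction with a unique fixed point $z^*$. The landing points of the chosen ray of $\mathcal{A}^*(\infty)$ and of the corresponding ray of $\mathcal{A}^*(0)$ both lie in the piece and are fixed by $B^q$, so both equal $z^*$.
\end{itemize}
The bi-accessible cycle is thus produced as the orbit of $z^*$, not chosen in advance as a repelling cycle of the circle map. This avoids your unproved step of matching an arbitrary $x_0$ with some periodic ray.
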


Finally, in Section \ref{sec:connectedness}, we connect these topological findings to the global geometry of the Julia set. The existence of bi-accessible points serves as an obstruction to the existence of multiply  connected basins. Combining our classification of Fatou components with the bi-accessibility result in Theorem~\ref{thm:biaccessibility}, we derive our second main result.

\begin{thmy}\label{thm:main-connectedness}
Let $d \in \mathbb{N}$ and consider the Blaschke family $B_{r,\alpha}$. For any parameters $r > 0$ and $\alpha \in [0,1)$, the Julia set $\mathcal{J}\left(B_{r,\alpha }\right)$ is connected if, and only if, the Fatou set $\mathcal{F}\left(B_{r,\alpha }\right)$ does not contain Herman rings.
\end{thmy}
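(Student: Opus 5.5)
The easy direction comes first. If $\mathcal{F}(B_{r,\alpha})$ contains a Herman ring $H$, then $H$ is a doubly connected Fatou component. Both complementary components of $H$ contain points of $\mathcal{J}(B_{r,\alpha})$: the boundary curves of a Herman ring lie in the Julia set, and each complementary component contains a boundary component. So $\mathcal{J}$ is disconnected. For the converse, I use the standard fact recalled in Section~\ref{sec:preliminaries}: $\mathcal{J}$ is connected if and only if every Fatou component is simply connected. By Sullivan's theorem every Fatou component is eventually periodic. By Riemann--Hurwitz, a preimage of a simply connected component is simply connected unless the preimage contains a critical point that separates the complement. The cleanest route is therefore to show that every \emph{periodic} Fatou component is simply connected, and then handle preperiodic components by pulling back.

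I split the parameter space according to $|a|$, using the three regions of Section~\ref{sec:TheBlaschke}.

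\emph{Simple region.} Here $\mathcal{J}=\mathbb{S}^1$, which is connected, so there is nothing to prove.

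\emph{Periodic components in the other two regions.} Theorem~\ref{thm:rotation-domains} excludes Siegel disks, and Herman rings are absent by hypothesis. The immediate basins of $0$ and $\infty$ are simply connected by Theorem~\ref{thm:super-attracting-basins}. By the symmetry $z\mapsto 1/\bar z$, any other periodic component is an attracting or parabolic basin. Such a basin must intersect $\mathbb{S}^1$, because a cycle off $\mathbb{S}^1$ would have a symmetric partner and the critical count forces both to live on the circle. This happens only for parameters in an Arnold tongue, where the circle map has an attracting or parabolic cycle of rotation number $p/q$. For a component $U$ of such a basin, suppose $U$ were multiply connected. Then some Jordan curve $\gamma\subset U$ separates Julia points. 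Since $U$ is symmetric and meets $\mathbb{S}^1$ in an arc, a non-trivial $\gamma$ can be chosen symmetric, crossing $\mathbb{S}^1$ twice within $U$. It then separates $0$ from $\infty$ or encloses a Julia continuum. The obstruction comes from bi-accessibility. By Theorem~\ref{thm:biaccessibility}, for adjacent parameters there is a repelling cycle on $\mathbb{S}^1$ with rotation number $p/q$ that is accessible from both $A(0)$ and $A(\infty)$. Joining the two access curves gives a closed curve in $A(0)\cup A(\infty)\cup\{\text{cycle}\}$ through each point of the cycle. These curves cut the sphere into sectors, each containing one arc of the attracting or parabolic cycle's basin. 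A separating loop in $U$ would have to cross these curves, which is impossible because $U$ is disjoint from $A(0)\cup A(\infty)\cup\mathcal{J}$. Thus $U$ lies in a simply connected sector region between two such curves, and a curve in $U$ can only enclose Julia points that are also enclosed in that sector. With a bit more care about what the loop bounds, this shows $U$ is simply connected.

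\emph{Non-adjacent tongue parameters and the diffeomorphism region.} Here I would treat non-adjacent parameters inside tongues by showing that every critical point not in $A(0)\cup A(\infty)$ sits in a distinct component, so no component carries enough critical points to create connectivity. Alternatively, I would reduce to the adjacent case by continuity of the bi-accessible curves (a holomorphic motion) across the tongue; whichever the paper's definition of ``adjacent'' permits. In the diffeomorphism region with irrational rotation and no Herman ring, the circle lies in $\mathcal{J}$, and every Fatou component other than the basins of $0$ and $\infty$ is a preimage of those basins.

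\emph{Preperiodic components.} Given a simply connected image $V$ and a preimage component $U$, Riemann--Hurwitz gives $\chi(U)=k\chi(V)-\#\text{crit}$. To keep $U$ simply connected, I would use that the critical points off $\mathbb{S}^1$ lie in $A(0)\cup A(\infty)$ or have symmetric orbits accounted for. The main obstacle is the bi-accessibility separation argument above: making rigorous that the access curves partition the sphere so that no loop in a basin can surround Julia points. The non-adjacent tongue parameters are the second hardest point.
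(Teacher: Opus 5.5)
Your outline has the same architecture as the paper's proof of Theorem~B. That means the easy direction via the doubly connected Herman ring, the split by $r$, Riemann--Hurwitz where components carry few critical points, and Theorem~\ref{thm:biaccessibility} as the obstruction when one basin carries both free critical points. However, the step that does the real work is not established. Curves from $0$ to $\infty$ through the points of the bi-accessible cycle only prevent a loop $\gamma\subset U$ from surrounding $0$, $\infty$, or a point of that cycle. They give no control over Julia points lying strictly inside one sector, and ``$U$ lies in a simply connected sector'' does not make $U$ simply connected. Your auxiliary claims that $U\cap\mathbb{S}^1$ is a single arc and that $\gamma$ may be taken symmetric also presuppose the simple connectivity you want to prove.

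The missing idea, used in Lemma~\ref{lemma:path-connected-K} and Theorems~\ref{thm:A-atractora-simplemente-conexa}--\ref{thm:A-parabolica-simplemente-conexa}, is to propagate bi-accessibility along the backward orbit of $\omega_0$. If $\gamma$ encloses a Julia point, then by density of iterated preimages it encloses a preimage of $\omega_0$. Such preimages lie on boundaries of preimages of $\mathcal{A}^*(0)$ and $\mathcal{A}^*(\infty)$, and they are joined to $\infty$ inside the set $K$ built from these closed basins and their preimages. That path avoids the attracting or parabolic basin, so it cannot cross $\gamma$, which is a contradiction. Without a statement of this kind, tying preimages of $\omega_0$ to $\infty$ through closures of the basins of $0$ and $\infty$, nothing in your sector picture excludes an infinitely connected basin. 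That is precisely the scenario bi-accessibility is meant to rule out.

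For non-adjacent tongue parameters, neither of your two alternatives is needed, because the paper's dichotomy is per component rather than per parameter. Suppose no component of the periodic cycle contains both free critical points. Then each map $U_i\to U_{i+1}$ along the cycle has at most one critical point, and Riemann--Hurwitz gives simple connectivity. A more careful version pulls back a linearizing disk, which at each step is branched over at most one value. If some component contains both free critical points, you are in the adjacent situation and the bi-accessibility argument applies, so no holomorphic motion is involved. Likewise, for $r\ge 2d+1$ the paper does not use bi-accessibility at all. Symmetry of periodic components together with Riemann--Hurwitz (Theorem~\ref{thm:general-components}) already handles every periodic component other than a Herman ring.
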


Theorem~\ref{thm:main-connectedness} characterizes precisely when the Julia set of the generalized Blaschke product $B_{a}$ is connected, producing an explicit correspondence between bi-accessible points and the existence of rotation domains. A similar characterization was established by Bonifant, Buff, and Milnor \cite{Bonifant2018} for the family of antipode preserving cubic maps. In their work, the existence of bi-visible points also serves as the fundamental obstruction to the connectivity of the Julia set.

\section{Preliminaries}
\label{sec:preliminaries}

The construction of puzzle pieces relies on the contraction properties of inverse branches in hyperbolic regions. The following results from Steinmetz \cite{Steinmetz2011} and Beardon \cite{Beardon1991} are essential.

\begin{theorem}
	\label{thm:SteinmetzInverse}
	Let $D$ be a simply connected subdomain of $\widehat{\mathbb{C}}$ containing no critical values of $f$. Then there exist distinct analytic inverse branches of $f$ on $D$, defined by the preimages of a reference point.
\end{theorem}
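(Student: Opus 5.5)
This is the classical monodromy argument; I would prove it by analytic continuation along curves in $D$ and then invoke the monodromy theorem. Let $d=\deg f$. Fix a reference point $z_0\in D$. The critical values of $f$ are finite in number, so I would first shrink nothing but instead observe the following. Since $D$ contains no critical value, $z_0$ is not a critical value. Hence $f^{-1}(z_0)$ consists of exactly $d$ distinct points $w_1,\dots,w_d$, each with $f'(w_j)\neq 0$ (interpreted in local coordinates if $w_j$ or $z_0$ is $\infty$). By the holomorphic inverse function theorem, for each $j$ there is a neighbourhood $U_j$ of $z_0$ and a holomorphic germ $g_j$ on $U_j$ with $f\circ g_j=\mathrm{id}$ and $g_j(z_0)=w_j$. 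These $d$ germs are the candidate inverse branches; they are distinct because their values at $z_0$ differ.

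The second step is to show that each germ $g_j$ continues analytically along every path $\gamma:[0,1]\to D$ starting at $z_0$. At each point $\zeta=\gamma(s)$, again $\zeta$ is not a critical value. The local inverse function theorem therefore gives, around each preimage of $\zeta$, a local inverse defined on a disk about $\zeta$. The key point is that a continuation cannot break down: I would use the standard path-lifting argument. Namely, $f$ restricted to $\widehat{\mathbb{C}}\setminus f^{-1}(\text{critical values})$ is a covering map onto $\widehat{\mathbb{C}}\setminus\{\text{critical values}\}$, being a proper local homeomorphism. Hence $\gamma$ lifts uniquely to a path $\tilde\gamma$ with $\tilde\gamma(0)=w_j$, and the local inverses along $\tilde\gamma$ glue to a continuation of $g_j$. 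Compactness of $[0,1]$, via a Lebesgue-number argument, makes this rigorous. I expect this step, justifying that $f$ is a covering over the complement of the critical values, to be the only real obstacle. Properness follows from the compactness of $\widehat{\mathbb{C}}$, and a proper local homeomorphism between surfaces is a covering map.

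Finally, since $D$ is simply connected, any two paths in $D$ from $z_0$ to a point $z$ are homotopic in $D$ with endpoints fixed. The monodromy theorem then shows that the continuation of $g_j$ to $z$ is independent of the path. This yields single-valued holomorphic maps $g_j:D\to\widehat{\mathbb{C}}$ with $f\circ g_j=\mathrm{id}_D$. Distinctness persists throughout $D$. If $g_j(z)=g_k(z)$ at some $z$, then uniqueness of path lifting, run from $z$ back to $z_0$, forces $w_j=w_k$. Hence the images $g_j(D)$ are pairwise disjoint, and $f^{-1}(D)=\bigsqcup_j g_j(D)$, with each branch determined by the preimage $g_j(z_0)$ of the reference point, as claimed.
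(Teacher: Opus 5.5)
Your argument is correct. The three ingredients you use settle the statement completely, and produce exactly $\deg f$ branches with pairwise disjoint images, each pinned down by its value at the reference point:
\begin{itemize}
\item $f$ restricted to $\widehat{\mathbb{C}}\setminus f^{-1}(\text{critical values})$ is a proper local homeomorphism, hence a finite covering;
\item paths in $D$ lift uniquely;
\item the monodromy theorem applies on the simply connected $D$.
\end{itemize}
The paper gives no proof of its own and only cites Steinmetz; your monodromy proof is the standard argument for this classical fact.
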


\begin{lemma}
\label{lemma:diam0}
	Let $f$ be a rational map of degree $d\geq 2$. Suppose that $D$ is a simply connected domain intersecting the Julia set but disjoint from the forward orbit of the critical points. Let $\phi$ be an analytic inverse branch of $f$ defined in $D$ such that $\phi(D) \subset D$. Then, for any compact subset $K \subset D$,
	\[
		\operatorname{diam}\left(\phi ^{n}(K)\right) \to 0 \mbox{ as } n\to\infty.
	\]
\end{lemma}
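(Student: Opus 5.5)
The plan is the standard argument that a univalent self-map of a hyperbolic domain, which does not fix a hyperbolic isometry, strictly contracts the hyperbolic metric. First I will check that $D$ is hyperbolic. The Julia set of a rational map of degree $d \geq 2$ is perfect and uncountable. The closure of the postcritical set $P(f)$ is forward invariant, and if it had at most two points the map would be conjugate to $z^{\pm d}$, whose Julia set is a circle containing no postcritical point. In that case $\widehat{\mathbb{C}}\setminus P(f)$ is itself hyperbolic, and in general $\widehat{\mathbb{C}}\setminus\overline{P(f)}$ omits at least three points. I will work in $W=\widehat{\mathbb{C}}\setminus \overline{P(f)}$, reading "disjoint from the forward orbit of the critical points" as disjoint from its closure, or, if necessary, shrinking $D$ to a simply connected neighbourhood of $K$ with compact closure in $D$. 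Then $D\subset W$ carries its hyperbolic metric $\rho_D$.

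By Theorem~\ref{thm:SteinmetzInverse}, $\phi$ is a univalent holomorphic map $D\to D$. Since $f\circ\phi=\mathrm{id}$ and $f$ has degree at least $2$, $\phi$ is not surjective onto $D$. If it were, then $f|_D=\phi^{-1}$ would be a bijection $D\to D$ and $D$ would be a completely invariant domain for some iterate. That cannot happen, because $D$ meets $\mathcal{J}(f)$ and $f^{-1}(D)$ contains other components given by the other branches. So $\phi$ is not a conformal automorphism of $D$. By the Schwarz--Pick lemma, $\phi$ then strictly decreases $\rho_D$: for every compact $L\subset D$ there is $\lambda_L<1$ with $\|\phi'(z)\|_{\rho_D}\le\lambda_L$ on $L$.

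The main obstacle is that the iterates $\phi^n(K)$ could drift toward $\partial D$, where a uniform contraction constant is unavailable. I will handle this with the Denjoy--Wolff dichotomy on the simply connected hyperbolic domain $D\cong\mathbb{D}$. The non-automorphism $\phi$ either has an attracting fixed point $z_0\in D$, or its iterates converge locally uniformly to a boundary point.

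In the first case, $\phi^n\to z_0$ uniformly on $K$, so $\operatorname{diam}\phi^n(K)\to0$ directly.

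In the second case, $\phi^n(K)$ has bounded $\rho_D$-diameter $\operatorname{diam}_{\rho_D}(K)$ and tends to a point $\zeta\in\partial D$. The hyperbolic density of $D$ blows up near $\partial D$, comparable to $1/\operatorname{dist}(z,\partial D)$ for simply connected $D$ by the Koebe quarter theorem. A set of bounded hyperbolic diameter approaching the boundary therefore has spherical diameter tending to $0$.

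Finally, I note that the hypothesis that $D$ meets $\mathcal{J}(f)$ excludes only degenerate situations, for example $\phi$ being the identity on some rotation domain. I use it solely to guarantee that $\phi$ is not an automorphism, as explained above. This gives $\operatorname{diam}(\phi^n(K))\to0$.
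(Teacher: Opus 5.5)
The paper gives no proof of this lemma; it is quoted from Steinmetz and Beardon. The standard argument behind such statements is of Montel type. Every branch of $f^{-n}$ on $D$ omits the postcritical set, so these branches form a normal family (the case $f\sim z^{\pm d}$ is checked by hand). A nonconstant limit is then excluded because it would force $f^{n_k}$ to map a fixed neighbourhood of a point of $\mathcal{J}(f)$ into a small disc, contradicting the blow-up property of the Julia set.

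Your route is genuinely different and sound in outline. You use the hypothesis $\phi(D)\subset D$ to reduce to a single holomorphic self-map of a simply connected hyperbolic domain. Schwarz--Pick, Denjoy--Wolff and the Koebe lower bound for $\rho_D$ then do the work, and dynamics enters only to show $\phi\notin\operatorname{Aut}(D)$. This buys an elementary function-theoretic proof plus extra information: either $\phi$ has an attracting fixed point in $D$ (a repelling fixed point of $f$), or $\phi^n$ escapes to $\partial D$. The Montel argument, in exchange, needs no self-map hypothesis and gives $\operatorname{diam} g(K)\to 0$ uniformly over all branches $g$ of $f^{-n}$.

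Three local points must be corrected.

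First, your reason for $\phi\notin\operatorname{Aut}(D)$ is wrong as written. The equality $\phi(D)=D$ gives $f(D)=D$, not complete invariance, and the extra components of $f^{-1}(D)$ contradict nothing. The correct argument: then $f^n(D)=D$ for all $n$, while $\widehat{\mathbb{C}}\setminus D$ is a nondegenerate continuum. Montel therefore puts $D$ in $\mathcal{F}(f)$, contradicting $D\cap\mathcal{J}(f)\neq\emptyset$.

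Second, drop the detour through $W=\widehat{\mathbb{C}}\setminus\overline{P(f)}$. The set $\widehat{\mathbb{C}}\setminus\{0,\infty\}$ is not hyperbolic, and shrinking $D$ is not allowed because it destroys $\phi(D)\subset D$. You only need that $D$ is simply connected and misses at least two points; a rational map of degree $\ge 2$ has at least two distinct critical values. Hence $D$ is conformally a disc.

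Third, in the escaping case there is no reason for $\phi^n(K)$ to converge to a single point of $\partial D$, since the Riemann map need not extend to $\overline{\mathbb{D}}$. You do not need this, though.
\begin{itemize}
\item Denjoy--Wolff gives $\sup_{z\in K}\operatorname{dist}(\phi^n(z),\partial D)\to 0$.
\item Integrate the Koebe bound $\rho_D\ge c/\operatorname{dist}(\cdot,\partial D)$ along a path, using that $\operatorname{dist}(\cdot,\partial D)$ is $1$-Lipschitz. This gives $|z-w|\le C(M)\operatorname{dist}(z,\partial D)$ whenever the hyperbolic distance between $z$ and $w$ is at most $M$.
\item To obtain a spherical statement, pass to a subsequence with $\phi^{n_k}(z_0)\to p\in\partial D$. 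Then work in a M\"obius chart sending a point of $\widehat{\mathbb{C}}\setminus D$ other than $p$ to $\infty$.
\end{itemize}
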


The next result is a combination of Lemmas 3.7.9 and 3.7.10 found in \cite{DeFaria2008}.

\begin{lemma}
	\label{thm:Contraction}
	Let $U$ be a periodic component (or a suitable puzzle piece) where an inverse branch $g = f^{-1}$ is defined. If $g: U \to U$ is a strict contraction with respect to the Poincaré metric, then $g$ has a unique fixed point in $U$, which is the limit of all forward orbits under $g$ in $U$.
\end{lemma}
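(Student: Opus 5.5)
The plan is to read the statement as a Banach fixed point argument in the hyperbolic metric of $U$. Since $U$ is a Fatou component or puzzle piece whose complement contains at least three points, it is a hyperbolic domain. Its Poincaré distance $d_U$ makes $U$ a \emph{complete} metric space, and $d_U$ induces the Euclidean topology on $U$. So it suffices to check that $g$ contracts $d_U$ by a uniform factor $k<1$.

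First I fix what ``strict contraction'' means. If it means $d_U(g(z),g(w))\le k\, d_U(z,w)$ for some $k<1$ and all $z,w\in U$, the argument is direct:
\begin{itemize}
\item For any $z_0\in U$, the orbit $z_n=g^n(z_0)$ satisfies $d_U(z_{n+1},z_n)\le k^n d_U(z_1,z_0)$, so it is Cauchy.
\item By completeness it converges to some $p\in U$, and continuity of $g$ gives $g(p)=p$.
\item If $q$ were a second fixed point, then $d_U(p,q)\le k\,d_U(p,q)$, which forces $p=q$.
\item Finally $d_U(g^n(z),p)\le k^n d_U(z,p)\to 0$ for every $z\in U$, so $p$ is the limit of all forward orbits under $g$.
\end{itemize}

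The main obstacle is the weaker reading, in which $g$ is only pointwise strictly contracting: $\|Dg(z)\|_U<1$ for all $z$. This holds automatically by Schwarz--Pick whenever $g$ is not a hyperbolic isometry. A pointwise contraction need not have a fixed point, since orbits can escape to $\partial U$. To handle this I would use the setting in which the lemma is applied: for puzzle pieces, $g(U)$ is compactly contained in $U$, because the inverse branch maps a drop strictly inside the previous one. Under $g(U)\Subset U$, choose a slightly larger hyperbolic domain $V$ with $g(U)\subset V\Subset U$.

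With $V$ in hand, I factor $g$ as $U\xrightarrow{g}V\hookrightarrow U$. By the Schwarz--Pick lemma, $g\colon U\to V$ is non-expanding from $d_U$ to $d_V$. The inclusion $V\hookrightarrow U$ contracts by a factor at most $k=\sup_{w\in \overline{g(U)}}\rho_U(w)/\rho_V(w)<1$. This supremum is less than $1$ because $\rho_U<\rho_V$ pointwise on $V$ (a strict comparison, as $V\neq U$) and $\overline{g(U)}$ is a compact subset of $V$. Integrating along geodesics shows $d_U(g(z),g(w))\le k\,d_U(z,w)$, which reduces everything to the uniform case above. Alternatively, Lemma~\ref{lemma:diam0} already gives $\operatorname{diam}(g^n(K))\to0$ for compact $K$, and hence uniqueness and attraction of the limit point. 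I would keep that as a fallback for puzzle pieces that avoid the postcritical set.
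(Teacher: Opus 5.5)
Your argument is correct and is the standard proof behind this lemma. The paper gives no proof of its own but quotes it from de Faria--de Melo (Lemmas 3.7.9--3.7.10): a holomorphic $g$ with $\overline{g(U)}\subset U$ contracts the complete Poincar\'e metric by a uniform factor $k<1$ (Schwarz--Pick plus the strict comparison $\rho_U<\rho_V$ on a compact set), and Banach's fixed point theorem then gives the unique attracting fixed point. Your point that the pointwise reading needs $\overline{g(U)}\subset U$ matches the hypothesis $\overline{\varphi(P_0)}=\overline{P}_2\subset P_0$ the paper uses when it applies the lemma; the only loose end is your fallback via Lemma~\ref{lemma:diam0}, which gives uniqueness but yields existence only once you intersect the nested compacta $g^n\bigl(\overline{g(U)}\bigr)$.
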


\subsection{Dynamics of Degree-One Circle Maps}
\label{sec:circle_maps}

We review a few concepts for the study of degree-one circle
maps. A comprehensive treatment of the theory can be found in \cite{DeFaria2008} and \cite{Alseda2000}.

Identify $\mathbb{S}^1$ with $\mathbb{R}/\mathbb{Z}$ and let $f\colon \mathbb{S}^1 \to \mathbb{S}^1$ be a continuous degree-one map. The space of lifts $F\colon \mathbb{R} \to \mathbb{R}$ (where $F(x+1)=F(x)+1$) is denoted by $\mathcal{L}_1$.

\begin{definition}
    \label{def:rot-number}
    The \emph{rotation number} of a lift $F \in \mathcal{L}_1$ at a point $x \in \mathbb{R}$ is
    \[ \rho(x, F) := \limsup_{n\to\infty} \frac{F^n(x)-x}{n}. \]
    The rotation set of $f$ is $\rho(f) = \{ \rho(x, F) \pmod 1 : x \in \mathbb{R} \}$.
\end{definition}

If $f$ is a homeomorphism, the rotation set is a singleton. For general degree-one maps, the rotation set is a closed interval, see \cite{Ito1981}. To avoid confusion with Definition~\ref{def:rot-number}, from now on we refer to $\rho(f)$ as the \emph{rotation interval} of $f$ in the case when $\rho(f)$ is not a singleton.

For circle diffeomorphisms, we provide the following constraint on the local stability of periodic orbits. 

\begin{lemma}
\label{lemma:non-repelling-cycle-existence}
    Let $f\colon \mathbb{S}^1 \to \mathbb{S}^1$ be an orientation-preserving diffeomorphism. If $f$ has a periodic orbit, then at least one periodic orbit is non-repelling.
\end{lemma}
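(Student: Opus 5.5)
The plan is to reduce the statement to a property of the rotation number, which for a homeomorphism is a single number. Let $F$ be a lift of $f$. If $f$ has a periodic orbit, then $\rho(f)=p/q$ is rational. Every periodic orbit of $f$ then has period $q$, and $F^q(x)=x+p$ at each periodic point $x$. I will work with the function $g(x)=F^q(x)-x-p$ on $\mathbb{R}$. It is $1$-periodic and continuous. It has a zero at every lift of a periodic point and at no other point.

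The key observation is that $g$ cannot be strictly positive everywhere nor strictly negative everywhere, since periodic points exist. Moreover, the zero set of $g$ is a closed proper subset of $\mathbb{R}$ unless $f^q=\mathrm{id}$. In that degenerate case every periodic point has multiplier $(f^q)'=1$, so each one is non-repelling and we are done. Otherwise the complement of the zero set is a union of open intervals. Take a complementary interval $(x_0,x_1)$ whose endpoints $x_0,x_1$ are periodic points with $g(x_0)=g(x_1)=0$. On this interval $g$ has constant sign; say $g>0$. Then $F^q(x)>x+p$ just to the right of $x_0$, so
\[
(f^q)'(x_0)=1+g'(x_0)\quad\text{and}\quad g'(x_0)\ge 0 \text{ from the right},
\]
while just to the left of $x_1$ we have $g>0$ and $g(x_1)=0$, which forces $g'(x_1)\le 0$. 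Hence $(f^q)'(x_1)=1+g'(x_1)\le 1$.

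Next I use that $f$ is an orientation-preserving diffeomorphism. This gives $(f^q)'(x_1)>0$, so the multiplier $\lambda=(f^q)'(x_1)$ lies in $(0,1]$. Therefore the orbit of $x_1$ is attracting or neutral, that is, non-repelling. If instead $g<0$ on the interval, the same argument applied at the endpoint $x_0$ gives $g'(x_0)\le 0$, and the conclusion is the same. The multiplier is the same at every point of the orbit by the chain rule, so the whole cycle is non-repelling.

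The only step needing care is justifying that such a complementary interval exists with both endpoints in the zero set. This follows because the zero set is closed, nonempty, $1$-periodic and not all of $\mathbb{R}$. Any component of its complement is therefore a bounded open interval, and its endpoints lie in the zero set. The one-sided derivative inequalities then follow directly from the sign of $g$ near the endpoints, together with differentiability of $g$. I expect no serious obstacle; the main point to check is that the case $f^q=\mathrm{id}$ is handled separately, as done above.
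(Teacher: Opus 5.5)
Your proof is correct and takes essentially the same route as the paper: both study the displacement function $F^q(x)-x-p$ (the paper writes it loosely as $f^q(z)-z$ on the circle) and find a zero where its derivative is $\le 0$, which gives a multiplier in $(0,1]$. Your direct version is slightly more careful than the paper's contradiction argument. It picks an endpoint of one complementary interval of the zero set, treats $f^q=\mathrm{id}$ separately, and uses positivity of the multiplier explicitly, but the underlying idea is the same.
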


\begin{proof}
If the rotation number of $f$ is irrational, then $f$ has no periodic points, and the lemma holds vacuously. We can therefore assume the rotation number is rational, $\rho(f)=p/q$, which guarantees the existence of periodic orbits with a period $q$. The analysis of $q$-periodic points of $f$ can be reduced to analyzing the fixed points of the iterated map $g := f^q$. A periodic orbit of $f$ is non-repelling if and only if its points are non-repelling fixed points for $g$. We proceed by contradiction, assuming that all periodic orbits of $f$ are repelling. This is equivalent to assuming that for every fixed point $z$ of $g$, the multiplier satisfies $g'(z) > 1$.

Consider the function $\psi\colon \mathbb{S}^1 \to \mathbb{R}$ defined by $\psi(z) = g(z) - z$. This function is well-defined on the circle, and its zeros correspond precisely to the fixed points of $g$. At any such fixed point $z_0$, our assumption implies that the derivative is positive:
$$\psi'(z_0) = g'(z_0) - 1 > 0.$$
This means that at every zero, the function $\psi$ is strictly increasing. However, a continuous function on a circle cannot have only zeros where it is strictly increasing. If we view the circle as the interval $[0,1]$ with identified endpoints and the function crosses the axis from negative to positive, it must eventually cross back from positive to negative to ensure $\psi(0) = \psi(1)$. At such zero, $z_c$, the derivative would have to be non-positive, $\psi'(z_c) \le 0$, which contradicts our assumption. Therefore, the initial assumption must be false. There must exist at least one fixed point $z_0$ of $g$ where $\psi'(z_0) \le 0$, which implies $g'(z_0) \le 1$. This point belongs to a non-repelling periodic orbit of $f$.
\end{proof}

\subsection{Rotation Sets for the Multiplication Map}

One of the main steps in the proof of biaccessible cycles will rely on the theory of rotation sets associated to the multiplication map $m_n(t) = nt \pmod{\mathbb{Z}}$ for $n \ge 2$. This theory has been extensively developed in \cite{Zakeri2018} from where we borrow the following exposition. 

\begin{definition}[Rotation Set]
    A non-empty compact set $X \subset \mathbb{R}/\mathbb{Z}$ is a \emph{rotation set for $m_n$} if it is invariant ($m_n(X)=X$) and $m_n$ preserves the circular order on $X$. The \emph{rotation number} $\rho(X)$ is defined via the map's degree-one monotone extension to the circle.
\end{definition}

If $\rho(X) = p/q$ is rational (in lowest terms), the dynamics are rigid: every forward orbit in $X$ is eventually periodic with period $q$. A rotation set is \emph{minimal} if it contains no proper subset that is also a rotation set. Every minimal rational rotation set is strictly a cycle of period $q$.

To classify these cycles, we examine their spatial distribution relative to the fixed points of $m_n$, given by $u_i = \frac{i}{n-1} \pmod{\mathbb{Z}}$ for $i=0, \dots, n-1$.

\begin{definition}[Deployment Vector]
    Let $X$ be a minimal rotation set with $\rho(X)=p/q$. The \emph{deployment vector} $\delta(X) = (\delta_1, \dots, \delta_{n-1})$ lies in the $(n-2)$-simplex $\Delta^{n-2}$, where each component $\delta_i$ represents the fraction of points in the cycle $X$ falling between consecutive fixed points:
    \[
        \delta_i = \frac{1}{q} \# \left\{ t \in X \colon t \in [u_{i-1}, u_i) \right\}.
    \]
\end{definition}

\begin{remark}
    For instance, under $m_3$, the cycle $X = \{1/8, 3/8\}$ has rotation number $1/2$. The fixed points are $0$ and $1/2$. Since both points of $X$ lie in $[0, 1/2)$, the deployment vector is $\delta(X) = (1)$.
\end{remark}

The following result establishes that the rotation number and deployment vector completely characterize the cycle.

\begin{theorem}[Goldberg \cite{Zakeri2018}]
\label{thm:Goldberg}
    For every rational $p/q \in (0,1)$ and every vector $\delta \in \Delta^{n-2}$ satisfying the integrality condition $q\delta_i \in \mathbb{Z}$, there exists a unique cycle $X$ for $m_n$ with rotation number $\rho(X) = p/q$ and deployment vector $\delta(X) = \delta$.
\end{theorem}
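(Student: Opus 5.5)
The plan is to encode a rational rotation cycle by the lengths of its complementary gaps, following the classical approach of Goldberg as presented in \cite{Zakeri2018}. Let $X=\{t_0<t_1<\dots<t_{q-1}\}$ be a cycle for $m_n$, indices taken mod $q$ in circular order. The requirements that $m_n$ preserve circular order on $X$ and that $\rho(X)=p/q$ force $m_n(t_j)=t_{j+p}$ for all $j$. Let $\ell_j$ be the length of the gap $I_j=(t_j,t_{j+1})$. The image $m_n(I_j)$ is an arc of length $n\ell_j$ running from $t_{j+p}$ to $t_{j+p+1}$, so it covers $I_{j+p}$ after some number $k_j\ge 0$ of full turns:
\begin{equation*}
n\ell_j=\ell_{j+p}+k_j,\qquad k_j\in\mathbb{Z}_{\ge 0}.
\end{equation*}
Summing over $j$ and using $\sum_j\ell_j=1$ gives $\sum_j k_j=n-1$. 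A gap with $k_j\ge 1$ is a \emph{major gap}, and I expect it to contain exactly $k_j$ fixed points $u_i$ of $m_n$. The reason is that the lift of $m_n$ on $I_j$ has slope $n$ and its net displacement is $k_j$ plus the boundary shift, so $m_n(x)-x$ crosses an integer exactly $k_j$ times on $I_j$. Since consecutive fixed points are separated by $k$-values, the deployment vector $\delta$ is read off from the cumulative counts of points of $X$ between consecutive fixed points. Thus $q\delta_i$ is the number of $t_j$'s lying between the $i$-th and $(i+1)$-st fixed points, counted along the sequence $(k_j)$.

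For \textbf{existence}, I would run this correspondence in reverse. Given $\delta$ with $q\delta_i\in\mathbb{Z}$, place $n-1$ markers among the $q$ gap positions so that the numbers of cycle positions between successive markers are $q\delta_1,\dots,q\delta_{n-1}$. Let $k_j$ be the number of markers in gap $j$. Then solve the linear system $\ell_j=(\ell_{j+p}+k_j)/n$. Because $j\mapsto j+p$ is a cyclic permutation of $\mathbb{Z}/q$ and division by $n$ is contracting, the system has the unique solution
\begin{equation*}
\ell_j=\frac{1}{n^q-1}\sum_{i=0}^{q-1}n^{q-1-i}\,k_{j+ip},
\end{equation*}
which is positive since $(k_j)$ is not identically zero along the cycle, and has total sum $1$. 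This fixes $X$ up to a rotation $t_0$. The relations $n(t_{j+1}-t_j)=\ell_{j+p}+k_j$ are automatically consistent with $m_n(t_j)=t_{j+p}$ once the single condition $nt_0\equiv t_p \pmod 1$ holds. That condition reads $(n-1)t_0\equiv \ell_0+\dots+\ell_{p-1}\pmod 1$ and has exactly $n-1$ solutions, which differ by the rotations $t\mapsto t+1/(n-1)$ permuting the fixed points. Among these I choose the solution for which the markers sit at the actual fixed points $u_i$ in the prescribed order. This realizes the deployment $\delta$.

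For \textbf{uniqueness}, any cycle with data $(p/q,\delta)$ determines $(k_j)$ through the marker and fixed-point correspondence above. Hence it determines the $\ell_j$ through the uniquely solvable system, and it determines $t_0$ through the normalization that the gaps containing the fixed points are the prescribed ones. Two such cycles therefore coincide.

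The step I expect to be the main obstacle is making the bijection between deployment vectors and the marker data $(k_j)$ rigorous. This requires two things. First, showing that a major gap with $k_j$ turns contains exactly $k_j$ fixed points. Second, showing that choosing among the $n-1$ solutions for $t_0$ moves the markers onto the fixed points consistently, so that distinct admissible solutions cannot realize the same $\delta$. I would first attempt both by a direct lift computation on each gap $I_j$, using that $m_n(x)-x$ is affine with slope $n-1$ there.
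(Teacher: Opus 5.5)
The paper does not prove this statement; it quotes it from Zakeri's exposition of Goldberg's work, so there is no internal proof to compare against. Your argument is the standard gap-length proof of Goldberg's theorem, and it is correct. The two steps you flag as obstacles close as follows.

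\textbf{Counting fixed points in a gap.} Work on a lift with $t_{j+1}=t_j+\ell_j$ and set $D(x)=(n-1)x$.

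Since $nt_j\equiv t_{j+p}$, the fractional part of $D(t_j)$ is the arc length from $t_j$ to $t_{j+p}$, namely $\sigma_j=\ell_j+\dots+\ell_{j+p-1}$. This lies in $(0,1)$ because $0<p<q$.

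From $n\ell_j=\ell_{j+p}+k_j$ and $\sigma_{j+1}=\sigma_j-\ell_j+\ell_{j+p}$ you get $(n-1)\ell_j=k_j+\sigma_{j+1}-\sigma_j$. Hence $\lfloor D(t_{j+1})\rfloor-\lfloor D(t_j)\rfloor=k_j$.

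Neither endpoint is fixed, so $I_j$ contains exactly $k_j$ fixed points.

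\textbf{Choosing $t_0$.} The rotation $R(t)=t+\frac{1}{n-1}$ commutes with $m_n$ and sends $u_i$ to $u_{i+1}$. Your $n-1$ solutions are exactly $R^s(X)$ for $s=0,\dots,n-2$, all with the same labeled gap lengths.

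Label the markers $M_0,\dots,M_{n-2}$ counterclockwise so that $q\delta_i$ cycle positions lie between $M_{i-1}$ and $M_i$. The fixed point occupying a given marker slot in $R^s(X)$ is $u_{c+s}$, for a constant $c$ depending on the slot. So exactly one $s$ puts $u_0$ at $M_0$, and circular order then puts each $u_i$ at $M_i$.

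\textbf{Wording fix in the uniqueness paragraph.} All $n-1$ rotated solutions have the same numbers $k_j$ of fixed points in each gap. So the normalization that pins down $t_0$ must be ``$u_0=0$ sits at $M_0$'', with $t_0$ taken as the first point of $X$ after $0$. It is not enough to say ``the gaps containing the fixed points are the prescribed ones.'' With that normalization, $\delta$ determines $(k_j)$, hence $(\ell_j)$, hence $t_0$, and uniqueness follows exactly as you describe.
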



\section{The Blaschke Family}
\label{sec:TheBlaschke}

This section investigates the dynamics of the Blaschke family $B_{c,t}$. We begin by defining the family and establishing a conformal conjugacy that reduces the study to a single complex parameter $a$. Then, we provide an analysis of the critical orbits in the dynamical plane and define the cicle maps associated to the Blaschke product restricted to $\mathbb{S}^1$. We conclude the section with a description of the topology and connectivity of Fatou components, including the proof for the absence of Siegel disks in this family.

While classical finite Blaschke products restrict their parameters to the unit disk $\mathbb{D}$ to map the unit circle $\mathbb{S}^1$ to itself, we study a broader family of rational maps. For $d \in \mathbb{N}$, $t \in [0,1)$, and any parameter $c \in \mathbb{C}$, we define the \emph{generalized Blaschke products} as:
\begin{equation}
    \label{eq:Blaschke}
    B_{c,t}(z) = e^{2\pi i t} z^{d+1} \left(\dfrac{z - c}{1 - \bar{c}z}\right)^d.
\end{equation}
Even when $a \in \mathbb{C}$, these maps preserve the essential Blaschke property of leaving $\mathbb{S}^1$ invariant. As a result, the dynamics on the Riemann sphere are symmetric with respect to the unit circle, in the sense that $B_{c,t}(z) = \mathcal{I} \circ B_{c,t} \circ \mathcal{I}(z)$, where $\mathcal{I}(z) = 1/\bar{z}$.

To reduce the dimension of the parameter space, we utilize the following conjugacy result.

\begin{lemma}
    \label{lemma:conjugacy-t-to-0}
    Let $\alpha \in \mathbb{R}$ and define the rotation $\eta(z) = e^{-2\pi i \alpha} z$. Then $\eta$ conjugates $B_{c,t}$ to $B_{b,s}$, where $s = t + 2d\alpha \pmod 1$ and $b = \eta(c)$. Specifically, by choosing $\alpha = -t/(2d)$, the map $B_{c,t}$ is conformally conjugate to $B_{a, 0}$ with $a = c e^{i \pi t/d}$.
\end{lemma}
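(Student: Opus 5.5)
This is a direct computation with the rotation $\eta$, so the plan is to compute $\eta\circ B_{c,t}\circ\eta^{-1}$ explicitly and read off its parameters. Write $\lambda = e^{-2\pi i\alpha}$, so $\eta(z)=\lambda z$, $\eta^{-1}(w)=\bar\lambda w$, and $|\lambda|=1$. The key identity I will use repeatedly is $\bar\lambda = \lambda^{-1}$, which lets the rotation pass through the Möbius factor.

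First I evaluate $B_{c,t}(\bar\lambda w)$:
\[
B_{c,t}(\bar\lambda w)= e^{2\pi i t}\,\bar\lambda^{\,d+1} w^{d+1}\left(\frac{\bar\lambda w - c}{1-\bar c\bar\lambda w}\right)^{d}.
\]
In the Möbius factor I pull out $\bar\lambda$ from the numerator: $\bar\lambda w - c = \bar\lambda(w-\lambda c)$. The denominator is $1-\overline{\lambda c}\,w$, since $\bar c\bar\lambda=\overline{\lambda c}$. Setting $b=\lambda c=\eta(c)$, the factor becomes $\bar\lambda\,\frac{w-b}{1-\bar b w}$. Altogether this gives
\[
B_{c,t}(\bar\lambda w)= e^{2\pi i t}\,\bar\lambda^{\,2d+1}\, w^{d+1}\left(\frac{w-b}{1-\bar b w}\right)^{d}.
\]

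Next I apply $\eta$, which multiplies by $\lambda$. The prefactor becomes $e^{2\pi i t}\bar\lambda^{\,2d}=e^{2\pi i t}e^{4\pi i d\alpha}=e^{2\pi i (t+2d\alpha)}$. Hence $\eta\circ B_{c,t}\circ\eta^{-1}=B_{b,s}$ with $s=t+2d\alpha \pmod 1$, where the reduction mod $1$ is harmless because only $e^{2\pi i s}$ enters.

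For the special case I choose $\alpha=-t/(2d)$, which gives $s=0$. Then $b=e^{-2\pi i\alpha}c=e^{\pi i t/d}c$, matching the claimed $a=ce^{i\pi t/d}$.

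There is no real obstacle. The only points needing care are the sign and conjugation bookkeeping in the Möbius factor (using $|\lambda|=1$), and tracking the exponent $2d+1$ so that exactly $2d$ factors of $\bar\lambda$ survive after composing with $\eta$. I will also note that $\eta$ is a Möbius map and hence a conformal automorphism of $\widehat{\mathbb{C}}$, so the conjugacy is conformal.
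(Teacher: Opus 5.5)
Your computation is correct. Pulling $\bar\lambda$ out of the M\"obius factor using $|\lambda|=1$ gives $\eta\circ B_{c,t}\circ\eta^{-1}(w)=e^{2\pi i(t+2d\alpha)}w^{d+1}\bigl(\frac{w-b}{1-\bar b w}\bigr)^d$ with $b=\lambda c$, and the choice $\alpha=-t/(2d)$ yields $s=0$ and $b=ce^{i\pi t/d}$. The paper states this lemma without proof, treating it as this routine verification, so your argument is the intended one.
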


This lemma ensures that the dynamical features of the full family are completely captured by the case $t=0$ (up to a rotation of the parameter $a$). Therefore, without loss of generality, we restrict our analysis to the one-parameter family $B_a := B_{a,0}$, defined by:
\begin{equation}
    \label{eq:Blaschke-sin-t}
    B_a(z) = z^{d+1} \left( \frac{z - a}{1 - \bar{a} z} \right)^d.
\end{equation}

The reduction of the parameter space is illustrated in Figure \ref{fig:Conjugation}.

\begin{figure}[!ht]
    \centering
    \includegraphics[width=0.8\linewidth]{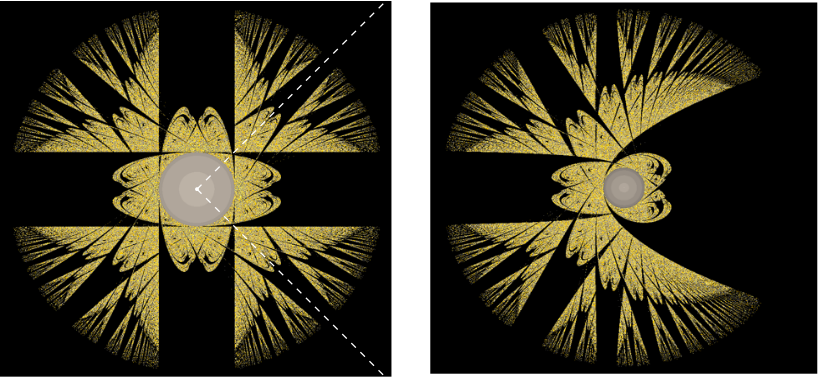}
    \caption{To the left, partial representation of the parameter space of $B_{a,0}$ for $d=2$. To the right, partial representation of the reduced parameter space of $B_{a,0}$ for $d=2$.}
    \label{fig:Conjugation}
\end{figure}

\subsection{Critical Orbits}
\label{sec:critical-orbits}

The map $B_a$ has degree $2d+1$, possessing $4d$ critical points counted with multiplicity. Differentiating \eqref{eq:Blaschke-sin-t}, we obtain:
\begin{equation}
    \label{eq:Blaschke-Derivative}
    B'_{a}(z)= - \dfrac{z^d\left(z-a\right)^{d-1}}{\left(1-\bar{a}z\right)^{d+1}}\cdot h(z),
\end{equation}
where $h(z)=\bar{a}(d+1)z^2 - (2d+1+|a|^2)z + a(d+1)$. The critical set consists of:
\begin{itemize}
    \item The super-attracting fixed points $0$ and $\infty$ (multiplicity $d$);
    \item The points $a$ and $1/\bar{a}$ (multiplicity $d-1$ if $d \ge 2$);
    \item Two \emph{free critical points} $c_\pm$ (roots of $h(z)$), given by:
\end{itemize}
\begin{equation}
    \label{eq:both-critical-points}
    c_\pm = a \cdot \dfrac{2d+1+|a|^2 \pm \sqrt{\Delta_a}}{2(d+1)|a|^2}, \quad \text{where } \Delta_a = \left(|a|^2-(2d+1)^2\right)\left(|a|^2-1\right).
\end{equation}

The positions of $c_\pm$ relative to the unit circle $\mathbb{S}^1$ and the parameters determine the dynamical regime. We summarize their properties below (see also Figure \ref{fig:position-critical-points}).

\begin{figure}
\centering
     \includegraphics[width=0.8\linewidth]{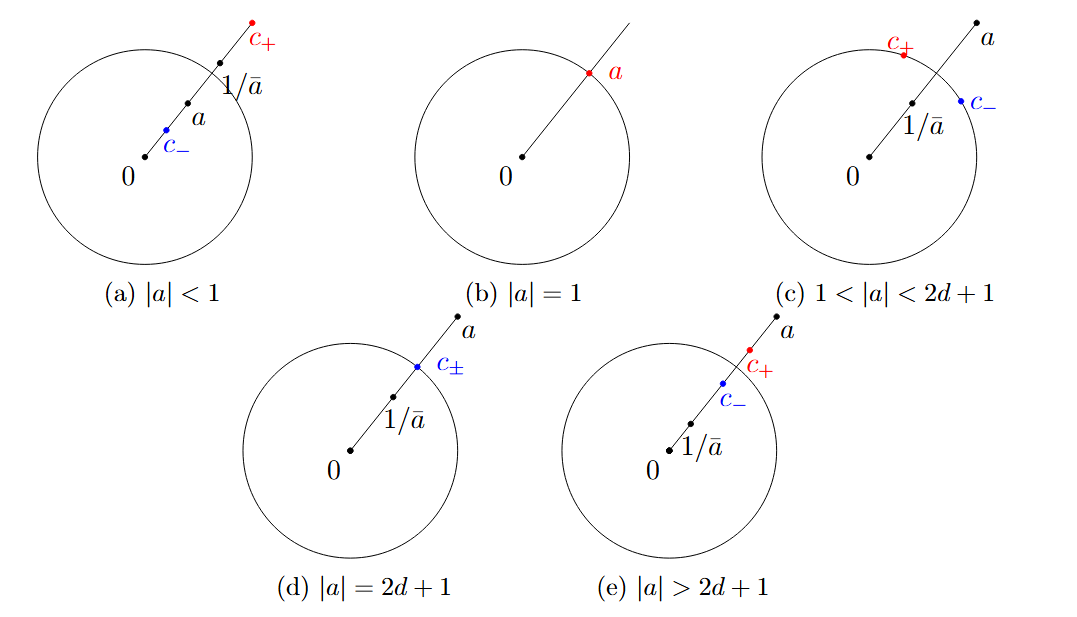}
    \caption{Location of the free critical points $c_\pm$ relative to the unit circle and the parameters $a, 1/\bar{a}$.}
    \label{fig:position-critical-points}
\end{figure}

\begin{proposition}[Location of Free Critical Points]
\label{prop:critical-points-location}
    The location of $c_\pm$ depends on $|a|$ as follows:
    \begin{enumerate}
        \item \textbf{$|a| > 2d+1$:} The points $c_\pm$ are symmetric with respect to $\mathbb{S}^1$ ($c_+ = 1/\bar{c}_-$). They lie on the ray passing through $a$ such that $1 < |c_+| < |a|$ and $|1/\bar{a}| < |c_-| < 1$.
        \item \textbf{$|a| < 1$:} Symmetry holds ($c_+ = 1/\bar{c}_-$). They lie on the ray passing through $a$ such that $|1/\bar{a}| < |c_+|$ and $|c_-| < |a| < 1$.
        \item \textbf{$1 \leq|a| \leq2d+1$:} The points $c_\pm$ lie on the unit circle $\mathbb{S}^1$. They are distinct if $1 < |a| < 2d+1$ and coincide if $|a|=1$ or $|a|=2d+1$.
    \end{enumerate}
\end{proposition}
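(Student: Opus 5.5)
The plan is to work directly with the quadratic factor $h(z)=\bar a(d+1)z^2-(2d+1+|a|^2)z+a(d+1)$ from \eqref{eq:Blaschke-Derivative}, whose roots are exactly $c_\pm$. First I will extract the structural facts from Vieta's formulas. The product of the roots is $c_+c_-=a(d+1)/(\bar a(d+1))=a/\bar a$, which has modulus one. The sum is $c_++c_-=(2d+1+|a|^2)/\bar a$, which is a positive real multiple of $a/|a|^2$. I will also exploit the symmetry of $h$ under $\mathcal{I}$: a direct computation gives $\overline{h(1/\bar z)}\,z^2=h(z)$, because the leading and constant coefficients are swapped under conjugation and the middle coefficient is real. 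Consequently the root set $\{c_+,c_-\}$ is invariant under $z\mapsto 1/\bar z$. So either each root is fixed by $\mathcal{I}$ (both lie on $\mathbb{S}^1$), or the two roots are swapped ($c_+=1/\bar c_-$).

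Next I will factor out the argument. Write $c_\pm=a\,\lambda_\pm$ with $\lambda_\pm=(2d+1+|a|^2\pm\sqrt{\Delta_a})/(2(d+1)|a|^2)$, as in \eqref{eq:both-critical-points}. Substituting into $h$ shows that $\lambda$ solves the real quadratic $(d+1)|a|^2\lambda^2-(2d+1+|a|^2)\lambda+(d+1)=0$. Its discriminant factors as $\Delta_a=(|a|^2-(2d+1)^2)(|a|^2-1)$, which I will verify by expanding both sides.

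The case split then follows from the sign of $\Delta_a$.

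When $|a|<1$ or $|a|>2d+1$, we have $\Delta_a>0$. The $\lambda_\pm$ are real, distinct and positive, since their sum and product are positive. Hence $c_\pm$ lie on the ray through $a$. Because $|c_+c_-|=1$ and the roots are distinct, they cannot both lie on $\mathbb{S}^1$: two points of the same ray on the circle coincide. So the symmetry $c_+=1/\bar c_-$ holds.

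For the strict inequalities, I will evaluate the real quadratic $q(\lambda)$ at the relevant points.
\begin{itemize}
\item At $\lambda=1/|a|^2$ (corresponding to $1/\bar a$), I get $q(1/|a|^2)=d(1-1/|a|^2)$.
\item At $\lambda=1$ (corresponding to $a$), I get $q(1)=d(|a|^2-1)$.
\end{itemize}
The vertex lies at $\lambda_0=(2d+1+|a|^2)/(2(d+1)|a|^2)$. For $|a|>2d+1$, both values of $q$ are positive, and $1/|a|^2<\lambda_0<1$. Since the roots straddle the vertex, this gives $1/|a|^2<\lambda_-<\lambda_+<1$, and the circle is crossed at $\lambda=1/|a|$. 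This yields $|1/\bar a|<|c_-|<1<|c_+|<|a|$. The case $|a|<1$ is handled the same way, using the sign change of $q$: here $q(1)<0$ gives $\lambda_-<1<\lambda_+$, and $q(1/|a|^2)<0$ gives $\lambda_+>1/|a|^2$.

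When $1\le|a|\le 2d+1$, we have $\Delta_a\le0$. The roots are then complex conjugate, $\bar\lambda_+=\lambda_-$, with $|\lambda_\pm|^2=\lambda_+\lambda_-=1/|a|^2$. Hence $|c_\pm|=1$. They coincide exactly when $\Delta_a=0$, that is, when $|a|=1$ or $|a|=2d+1$.

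The only delicate point is the bookkeeping of the strict orderings relative to $|a|$ and $1/|a|$ in the real-root case. I expect this to be routine once the evaluations of $q$ at $1$ and at $1/|a|^2$ and the location of the vertex are recorded.
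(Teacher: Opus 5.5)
Your proposal is correct and follows essentially the paper's argument: the sign of $\Delta_a$ separates the cases, the conjugate-reciprocal symmetry of $h$ gives $c_+=1/\bar c_-$, and $|c_\pm|=1$ comes from the product of the roots when $\Delta_a\le 0$. The only difference is how you get the strict inequalities: you evaluate the real quadratic at $\lambda=1$ and $\lambda=1/|a|^2$ rather than squaring $\sqrt{\Delta_a}<(2d+1)(|a|^2-1)$, which also makes explicit the $|a|<1$ case and why $c_+$ is the outer root, both of which the paper leaves implicit.
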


\begin{proof}
    \textit{Cases 1 \& 2 ($|a| > 2d+1$ or $|a| < 1$):} In these regions, the discriminant $\Delta_a$ in \eqref{eq:both-critical-points} is positive, implying $c_\pm$ lie on the same ray as $a$. The symmetry $c_+ = 1/\bar{c}_-$ follows from the conjugate-reciprocal coefficients of $h(z)$.
    For $|a| > 2d+1$, the condition $|c_+| < |a|$ is equivalent to $\sqrt{\Delta_a} < (2d+1)(|a|^2-1)$. Squaring both sides (valid as terms are positive) yields the true inequality $|a|^2(1-(2d+1)^2) < 0$. Since $c_+ \in \mathbb{D}^c$, we have $1 < |c_+| < |a|$. The position of $c_-$ follows by inversion.
    For $|a| < 1$, similar algebraic manipulation shows $|c_-| < |a|$.

    \textit{Case 3 ($1 \leq|a| \leq2d+1$):} Here $\Delta_a \leq0$, making the square root purely imaginary. Thus $|c_\pm|^2 = c_+ \bar{c}_+ = 1$, placing them on $\mathbb{S}^1$.
\end{proof}

Based on Proposition~\ref{prop:critical-points-location}, we classify the parameter space into the following three regions.

\paragraph{Region I: Trivial Dynamics ($|a| \leq 1$)}
If $|a| < 1$, the pole $1/\bar{a}$ lies outside $\overline{\mathbb{D}}$. Symmetry implies $\mathcal{A}^*(0) = \mathbb{D}$ and $\mathcal{A}^*(\infty) = \widehat{\mathbb{C}} \setminus \overline{\mathbb{D}}$.
If $|a| = 1$, the map simplifies to the monomial $B_a(z) = (-a)^d z^{d+1}$ for $z \neq a$.
In both cases, we have:
\begin{lemma}
\label{lemma:Julia-is-S1-a-less-1}
    If $|a| \leq 1$, the Julia set is the unit circle, $\mathcal{J}(B_a) = \mathbb{S}^1$.
\end{lemma}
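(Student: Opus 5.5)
We treat the two cases $|a|<1$ and $|a|=1$ separately; both rest on the fact that the unit disk and its exterior are invariant and are attracted to the super-attracting fixed points $0$ and $\infty$.

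\emph{Case $|a|<1$.} Here $B_a$ is a genuine finite Blaschke product. The factor $(z-a)/(1-\bar a z)$ is a disk automorphism, so $B_a$ maps $\mathbb{D}$ onto itself, $\mathbb{S}^1$ onto itself, and, by the symmetry $B_a=\mathcal{I}\circ B_a\circ\mathcal{I}$, the exterior $\widehat{\mathbb{C}}\setminus\overline{\mathbb{D}}$ onto itself. We then show that every point of $\mathbb{D}$ converges to $0$. Write $B_a(z)=z\,g(z)$ with $g(z)=z^d\bigl((z-a)/(1-\bar a z)\bigr)^d$, a holomorphic self-map of $\mathbb{D}$ with $|g|<1$. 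Schwarz's lemma then gives $|B_a(z)|<|z|$ on $\mathbb{D}\setminus\{0\}$. On each compact disk $|z|\le r<1$ we have $|B_a(z)|\le k_r|z|$ with $k_r=\max_{|z|\le r}|g|<1$, so $B_a^n\to 0$ locally uniformly on $\mathbb{D}$. Hence $\mathbb{D}\subset\mathcal{F}(B_a)$, and by symmetry the exterior lies in the basin of $\infty$, so it is also Fatou. Thus $\mathcal{J}(B_a)\subset\mathbb{S}^1$. To get equality, take any $z_0\in\mathbb{S}^1$: every neighbourhood of $z_0$ contains points whose iterates tend to $0$ and points whose iterates tend to $\infty$, so the iterates cannot form a normal family there. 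Hence $\mathcal{J}(B_a)=\mathbb{S}^1$.

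\emph{Case $|a|=1$.} Since $\bar a=1/a$, we have $1-\bar a z=-\bar a(z-a)$, so $(z-a)/(1-\bar a z)=-a$ away from $z=a$. After removing the singularity, $B_a$ is the rational map $(-a)^d z^{d+1}$ of degree $d+1\ge 2$. This is conjugate to $z^{d+1}$ by a rotation $z\mapsto\lambda z$ with $\lambda^{d}=(-a)^d$; for example $\lambda=-a$ works, giving $\lambda^{-1}(-a)^d(\lambda z)^{d+1}=(-a)^{d}\lambda^{d}z^{d+1}$. Rather than fuss with the constant, we argue directly: $|B_a(z)|=|z|^{d+1}$, so $|z|<1$ gives convergence to $0$ and $|z|>1$ gives convergence to $\infty$. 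The same non-normality argument at points of $\mathbb{S}^1$ then shows $\mathcal{J}=\mathbb{S}^1$.

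The only subtle step is the strict contraction on $\mathbb{D}$ when $|a|<1$. We need convergence to $0$ for every point of $\mathbb{D}$, not merely that $\mathbb{D}$ is invariant, because $B_a$ has no other attracting cycle to rule out a priori. The factorization $B_a(z)=z\,g(z)$ combined with Schwarz's lemma handles this uniformly on compact sets. The degenerate identification at $|a|=1$, where the map drops degree, is handled by the removable-singularity remark above.
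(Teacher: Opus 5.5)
Your proof is correct and takes the same route as the paper, which only observes that for $|a|<1$ the disk and its exterior are the immediate basins of $0$ and $\infty$ (Blaschke property plus the symmetry $\mathcal{I}$), and that for $|a|=1$ the map reduces to the monomial $(-a)^d z^{d+1}$; you supply the omitted details, namely the contraction $|B_a(z)|<|z|$ on $\mathbb{D}$ and the non-normality at points of $\mathbb{S}^1$. One harmless slip: $\lambda=-a$ does not conjugate $(-a)^d z^{d+1}$ to $z^{d+1}$ (your own computation gives $a^{2d}z^{d+1}$; you need $\lambda^d=(-a)^{-d}$, e.g.\ $\lambda=-\bar a$), but you then argue directly from $|B_a(z)|=|z|^{d+1}$, so the proof is unaffected.
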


\paragraph{Region II: Endomorphism Dynamics ($1 < |a| < 2d+1$).}
This region contains the most complex boundary dynamics, as the free critical points $c_\pm$ are distinct and lie on $\mathbb{S}^1$. Consequently, the restriction $B_a|_{\mathbb{S}^1}$ is a non-injective degree-one circle endomorphism. The rotation set for these maps is a non-empty closed interval rather than a single number.

\paragraph{Region III: Homeomorphism Dynamics ($|a| \geq 2d+1$).}
Here, at most one critical point lies on the circle.
\begin{itemize}
    \item If $|a| > 2d+1$, $c_\pm$ are symmetric but off the circle. The map $B_a|_{\mathbb{S}^1}$ is an analytic circle diffeomorphism.
    \item If $|a| = 2d+1$, $c_\pm$ collide into a single critical point on $\mathbb{S}^1$. The map $B_a|_{\mathbb{S}^1}$ is an orientation-preserving homeomorphism.
\end{itemize}
In this region, the map possesses a unique, well-defined rotation number.

\subsection{Blaschke Circle Maps and Arnold Tongues}
\label{section:reparametrization}

To apply the theory of rotation intervals \cite{Boyland1986, Canela2016}, we reparametrize the Blaschke family on $\mathbb{S}^1$ as a family of degree-one circle maps. We restrict the parameter space to $r \in (1, \infty)$ and the fundamental domain $\alpha \in (-\frac{1}{4d}, \frac{1}{4d}]$. The resulting map $g_{r,\alpha}$ and its lift $G_{r,\alpha}$ (chosen via the principal branch of the logarithm) are defined as:
\begin{align}
    \label{eq:blaschke-circle-maps}
    g_{r,\alpha}(e^{2\pi ix}) &= e^{2\pi i 2d\alpha }e^{2\pi i (d+1)x}\left(\dfrac{e^{2\pi ix}-r}{1-re^{2\pi ix}}\right)^d, \\
    \label{eq:lift-logarithm}
    G_{r,\alpha}(x) &= (d+1)x+2d\alpha +\dfrac{d}{2\pi i}\log \left(\dfrac{e^{2\pi ix}-r}{1-re^{2\pi ix} }\right).
\end{align}
The dynamical nature of $g_{r,\alpha}$ is determined by $r$ as follows (compare with Proposition~\ref{prop:critical-points-location}).

\begin{proposition}
    \label{prop:remarks}
    The map $g_{r,\alpha}$ is a degree-one circle map, whose behavior depends on $r$ in the following manner.
    \begin{enumerate}
        \item[a)] For $r > 2d+1$, it is an orientation-preserving diffeomorphism.
        \item[b)] For $r = 2d+1$, it is an orientation-preserving homeomorphism.
        \item[c)] For $1 < r < 2d+1$, it is a continuous endomorphism.
    \end{enumerate}
\end{proposition}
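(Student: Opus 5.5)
The plan is to work with the explicit form of $g_{r,\alpha}$ on $\mathbb{S}^1$. Writing $z=e^{2\pi i x}$, set
\[
M_r(z)=\frac{z-r}{1-rz}.
\]
For real $r>1$ this Möbius map preserves $\mathbb{S}^1$ but reverses orientation, since $M_r$ sends $\mathbb{D}$ to its exterior. The first step is to compute the derivative of the lift $G_{r,\alpha}$ in \eqref{eq:lift-logarithm}. Differentiating the logarithm gives
\[
G_{r,\alpha}'(x)=(d+1)-d\,\frac{r^2-1}{|1-re^{2\pi i x}|^2}=(d+1)-d\,\frac{r^2-1}{1-2r\cos 2\pi x+r^2}.
\]
This uses the fact that the argument derivative of $M_r$ along the circle equals $-(r^2-1)/|1-rz|^2$. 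The derivative is a real-analytic, $1$-periodic function of $x$.

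The degree is then immediate: $G_{r,\alpha}(x+1)-G_{r,\alpha}(x)=(d+1)-d=1$. The reason is that $z^{d+1}$ contributes winding $d+1$, while the orientation-reversing Möbius factor contributes winding $-1$, and it is raised to the power $d$. Continuity of the lift follows because $M_r(z)$ never hits $0$ or $\infty$ on $\mathbb{S}^1$ when $r>1$, so a continuous branch of the logarithm exists. Hence $g_{r,\alpha}$ is a continuous degree-one map for every $r>1$, and the constant rotation $e^{2\pi i\,2d\alpha}$ plays no role.

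Next I analyse the sign of $G'$. The fraction $(r^2-1)/(1-2r\cos2\pi x+r^2)$ is the Poisson-type kernel. It is maximized at $x=0$, where it equals $(r+1)/(r-1)$, and minimized at $x=1/2$, where it equals $(r-1)/(r+1)$. Thus $\min_x G'=(d+1)-d(r+1)/(r-1)$. This minimum is positive exactly when $(d+1)(r-1)>d(r+1)$, that is, when $r>2d+1$. It is zero when $r=2d+1$, and negative when $1<r<2d+1$.

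\emph{Case $r>2d+1$.} Here $G'>0$ everywhere. Since $G$ is strictly increasing, real-analytic and of degree one, $g$ is an orientation-preserving analytic diffeomorphism.

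\emph{Case $r=2d+1$.} Here $G'\ge0$, with equality only at $x=0$, which is the single collided critical point, consistent with Proposition~\ref{prop:critical-points-location}. A $C^1$ function whose derivative is nonnegative and vanishes at isolated points is strictly increasing. So $g$ is an orientation-preserving homeomorphism, but not a diffeomorphism, because its inverse fails to be differentiable at $g(1)$.

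\emph{Case $1<r<2d+1$.} Here $G'(0)<0$ while $G'(1/2)=(d+1)-d(r-1)/(r+1)>0$. Hence $G$ is not monotone, and $g$ has two distinct critical points $c_\pm\in\mathbb{S}^1$, namely the roots of $h$. Therefore $g$ is a continuous but non-injective degree-one endomorphism.

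I expect no real obstacle; the only delicate points are bookkeeping. First, the sign convention in the derivative of $\log M_r$ must be right: checking it at $x=0$, where $M_r(1)=-1$, confirms the orientation reversal. Second, in case b), strict monotonicity has to be justified despite the vanishing derivative, and the isolated-zero argument above handles this.
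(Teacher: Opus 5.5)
Your proposal is correct and follows the paper's proof. It uses the same factorization $z^{d+1}(M_r(z))^d$ to get degree $(d+1)-d=1$, the same formula for $G'_{r,\alpha}$, and a sign analysis of $G'$ in the three ranges of $r$. The only real difference is how the threshold is found. You extremize the Poisson-type kernel directly, getting $\min G'=(d+1)-d(r+1)/(r-1)$ at $x=0$, whereas the paper reads the zeros of $G'$ off Proposition~\ref{prop:critical-points-location}. Your isolated-zero argument also makes injectivity in case b) explicit, which the paper only asserts.

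One harmless slip: $M_r(1)=1$ (it is $M_r(-1)=-1$ that holds). The orientation check at $x=0$ should instead rest on the sign of the argument derivative there, $-(r+1)/(r-1)<0$, which already confirms the reversal.
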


\begin{proof}
    Let $z\in \mathbb{S}^1$. Then the map can be factored as $g_{r,\alpha}(z) = e^{2\pi i (2d\alpha)} \cdot z^{d+1} \cdot (\phi_r(z))^d$, where $\phi_r(z) = \frac{z-r}{1-rz}$. Since $\phi_r$ is an orientation-reversing homeomorphism (hence of degree $-1$), its principal lift $\hat{\phi}_r$ satisfies $\hat{\phi}_r(x+1) = \hat{\phi}_r(x)-1$. The lift $G_{r,\alpha}(x) = (d+1)x + 2d\alpha + d\hat{\phi}_r(x)$ therefore satisfies:
    \[
        G_{r,\alpha}(x+1) = (d+1)(x+1) + 2d\alpha + d(\hat{\phi}_r(x)-1) = G_{r,\alpha}(x) + 1.
    \]
    Thus, $g_{r,\alpha}$ has degree one. To classify the morphism type of $g_{r,\alpha}$, we analyze the derivative
    \begin{equation}
        \label{eq:lift-derivative}
        G'_{r,\alpha}(x) = d+1 + d \cdot \frac{1-r^2}{1-2r\cos(2\pi x)+r^2}.
    \end{equation}
    Observe that $g_{r,\alpha}$ fails to be a homeomorphism if and only if critical points lie on $\mathbb{S}^1$, which occurs when $1 \leq r \leq2d+1$. Therefore,
    \begin{itemize}
        \item[a)] if $r > 2d+1$, the critical points are strictly outside $\mathbb{S}^1$, implying $G'_{r,\alpha}(x) > 0$ everywhere, so $g_{r,\alpha}$  is a diffeomorphism.
        \item[b)] If $r = 2d+1$, then critical points of collide on $\mathbb{S}^1$. This implies $G'_{r,\alpha}(x) \ge 0$, vanishing only at the lifts of the critical point. Consequently $g_{r,\alpha}$ remains injective, hence a homeomorphism.
        \item[c)] If $1 < r < 2d+1$, two distinct critical points lie on $\mathbb{S}^1$, so $G'_{r,\alpha}(x)$ vanishes at the lifts of those two points. The map $g_{r,\alpha}$ is a non-injective endomorphism. \qedhere
    \end{itemize}
\end{proof}

In summary, a generalized Blaschke product $B_{c,t}$ is conformally conjugated to a Blaschke product $B_a=B_{a,0}$ depending on a single complex parameter $a\in \mathbb{C}$. For those parameters with norm $|a|\geq 1$, the restriction of $B_a|\mathbb{S}^1$ defines a circle map $g_{r,\alpha}$ with $r=|a|$ and $\arg(a)=2d\alpha$.

We now present the definition of an \emph{Arnold tongue} for the family $g_{r,\alpha}$, which is a
 useful concept introduced by Arnol'd for the study of bifurcation spaces of 2-parameter circle maps (see \cite{Boyland1986}): We define the \emph{Arnold tongue} of rotation number $s \in \mathbb{R}$ as the set of parameters
 \[T_s = \{ (r,\alpha) \colon s \in \rho(g_{r,\alpha}) \},\]
 where $\rho$ denotes the rotation interval of $g_{r,\alpha}$. 

\noindent We are particularly interested in tongues for rational rotation number $s=p/q$, where $(p,q)=1$. Within the tongues $T_{p/q}$, we identify the \emph{adjacent region} $AT_{p/q}$ as the set of parameters $(r,\alpha)$ with $r \in (1, 2d+1)$ where both free critical points of $g_{r,\alpha}$ lie in the immediate basin of an attracting (or superattracting) fixed point $z_0 \in \mathbb{S}^1$. In Section~\ref{sec:biaccessibility} the adjacent parameters will become of utmost interest in the proof of Theorem~\ref{thm:biaccessibility}.

\subsection{Fatou Components of the Blaschke family}

This section classifies the Fatou components of the Blaschke family $B_{a}$. The existence and topology of these components are strictly determined by the location of the critical points and the parameter $a$.

We begin by ruling out the existence of Siegel disks for the entire family and characterizing the existence of Herman rings.

\begin{theorem}[Rotation Domains]
\label{thm:rotation-domains}
    Let $B_a$ be the Blaschke product defined in \eqref{eq:Blaschke-sin-t}.
    \begin{enumerate}
        \item The map $B_a$ has no Siegel disks for any $a \in \mathbb{C}$.
        \item If $|a| \leq 2d+1$, the map $B_a$ has no Herman rings.
    \end{enumerate}
\end{theorem}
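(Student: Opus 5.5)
We argue separately for the two statements, using the symmetry $B_a=\mathcal{I}\circ B_a\circ\mathcal{I}$ with $\mathcal{I}(z)=1/\bar z$, the classification of parameters into Regions I--III, and the standard fact that the boundary of any rotation domain (Siegel disk or Herman ring) lies in the closure of the postcritical set $P(B_a)=\overline{\bigcup_{n\ge1}B_a^n(\mathrm{Crit})}$. The critical points are $0,\infty$ (fixed), $a$ and $1/\bar a$ (which map to $0$ and $\infty$), and the two free critical points $c_\pm$. So the only orbits that can accumulate on the boundary of a rotation domain are those of $c_\pm$.

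\emph{No Siegel disks.} Suppose $U$ is a Siegel disk of period $k$, centered at $z_0$. The points $0$ and $\infty$ are super-attracting, so $z_0\notin\{0,\infty\}$. For $|a|\le1$ there is nothing to prove, because $\mathcal{J}(B_a)=\mathbb{S}^1$ by Lemma~\ref{lemma:Julia-is-S1-a-less-1} and the Fatou set consists only of the two basins. For $|a|>1$ we split according to the position of $z_0$. If $z_0\in\mathbb{S}^1$, its multiplier along the invariant circle is the derivative of the real-analytic circle map $g_{r,\alpha}^k$, hence real. An irrationally indifferent multiplier $e^{2\pi i\theta}$ cannot be real, so this case is impossible. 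If $z_0\notin\mathbb{S}^1$, then $\mathcal{I}(U)$ is also a Siegel cycle, centered at $\mathcal{I}(z_0)\neq z_0$. The two cycles are distinct (or, if $\mathcal{I}$ swaps points of the same cycle, $U$ meets $\mathbb{S}^1$). Each Siegel cycle needs a critical orbit accumulating on its boundary, and this must be one of $c_\pm$. For $1\le |a|\le 2d+1$ both $c_\pm$ lie on $\mathbb{S}^1$, which is forward invariant. Their orbits therefore stay on $\mathbb{S}^1$, so $\partial U\subset \mathbb{S}^1$. This is impossible for a disk $U$ not meeting $\mathbb{S}^1$ (its boundary would be a circle-arc set bounding a disk off the circle, forcing $U=\mathbb{D}$ or its complement, which contain $0$ or $\infty$). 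If instead $U$ meets $\mathbb{S}^1$, then $U\cap\mathbb{S}^1$ is an invariant arc set on which $g^k$ is conjugate to an irrational rotation. Since $U$ is simply connected and symmetric, it contains an arc around a point of $\mathbb{S}^1$ fixed by $\mathcal{I}$, and the center is then symmetric, i.e.\ $z_0\in\mathbb{S}^1$, which the previous case excludes. For $|a|>2d+1$, $\mathbb{S}^1$ is an invariant analytic circle on which $B_a$ is a diffeomorphism, and $c_+$, $c_-$ are exchanged by $\mathcal{I}$. A Siegel disk symmetric-pair argument gives two cycles, each needing one of $c_+$, $c_-$. Here I would instead use that $\mathbb{D}$ contains $c_-$ and the pole $1/\bar a$, and argue via the quasiconformal surgery of Herman type: the map is quasiconformally conjugate (after replacing the action on $\overline{\mathbb{D}}$ by a rotation) to a polynomial-like model whose Siegel disks would have to sit in a basin. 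This is the step I expect to be the main obstacle. The cleanest route is probably to show directly that the orbit of $c_-$ in $\mathbb{D}$, or of $c_+$ outside, is trapped in one half-sphere, so each half-sphere carries at most one candidate cycle, and then to rule out a Siegel disk contained in $\mathbb{D}$ via the real-multiplier symmetry once more.

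\emph{No Herman rings for $|a|\le2d+1$.} For $|a|\le1$ the claim again follows from Lemma~\ref{lemma:Julia-is-S1-a-less-1}. For $1<|a|\le2d+1$, every critical orbit either lands at $0$ or $\infty$ or lies in $\mathbb{S}^1$, so $P(B_a)\subset\{0,\infty\}\cup\mathbb{S}^1$. Let $A$ be a Herman ring of period $k$. Both boundary components of $A$ lie in $P(B_a)\cap\mathcal{J}(B_a)\subset\mathbb{S}^1$, and neither can be the single point $0$ or $\infty$, since those lie in the Fatou set. A doubly connected domain both of whose boundary components lie in $\mathbb{S}^1$ and meet the Julia set must intersect $\mathbb{S}^1$, as a curve of $A$ separating the components must cross the circle. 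The invariant curves of $A$ are real-analytic Jordan curves on which $B_a^k$ is conjugate to an irrational rotation, and by symmetry we may take one of them, $\gamma$, meeting $\mathbb{S}^1$. Then $\gamma\cap\mathbb{S}^1$ is a nonempty compact $B_a^k$-invariant set. Because the rotation on $\gamma$ is minimal, this set must be all of $\gamma$, so $\gamma=\mathbb{S}^1$. But $\mathbb{S}^1$ contains the critical points $c_\pm$ of $B_a$, and $B_a^k$ cannot be injective near a critical point lying in the interior of a rotation domain. This is a contradiction, and for $|a|=2d+1$ the same argument applies with the single double critical point on $\mathbb{S}^1$.
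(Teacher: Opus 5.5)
Your treatment of the range $|a|\le 2d+1$ is sound in both parts, and it takes a different, more direct route than the paper's. There $\overline{P(B_a)}\subset\{0,\infty\}\cup\mathbb{S}^1$, so the boundary of any rotation domain lies in $\mathbb{S}^1$. An open connected set with boundary in $\mathbb{S}^1$ must contain $\mathbb{D}$ or $\widehat{\mathbb{C}}\setminus\overline{\mathbb{D}}$, hence $0$ or $\infty$. This observation alone also rules out Herman rings, so your minimality argument is optional. Your real-multiplier argument also cleanly justifies the fact, used in the paper, that a Siegel disk cannot meet $\mathbb{S}^1$.

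The genuine gap is part~1 for $|a|>2d+1$. There you only show that Siegel disks avoid $\mathbb{S}^1$ and come in $\mathcal{I}$-symmetric pairs fed by $c_+$ and $c_-$. But two free critical points can perfectly well serve such a pair, so no contradiction has been reached. The routes you sketch do not close it:
\begin{itemize}
\item $\mathbb{D}$ is not forward invariant, since it contains the pole $1/\bar a$, so nothing traps the orbit of $c_-$ in $\mathbb{D}$.
\item The real-multiplier argument applies only to cycles on $\mathbb{S}^1$, not to a Siegel disk inside $\mathbb{D}$.
\item Replacing the dynamics on $\overline{\mathbb{D}}$ by a rotation presupposes that $B_a|_{\mathbb{S}^1}$ is conjugate to an irrational rotation. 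That fails exactly when the circle map has periodic orbits.
\end{itemize}
Your parenthetical claim that an $\mathcal{I}$-symmetric Siegel cycle must meet $\mathbb{S}^1$ is also unjustified: one may have $\mathcal{I}(U_0)=U_j$ with $j\neq 0$. Nothing else in your argument depends on it.

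The missing idea, which is what the paper uses, is to set the critical-orbit count against an extra non-repelling cycle on the circle.
\begin{itemize}
\item By Lemma~\ref{lemma:non-repelling-cycle-existence}, if the diffeomorphism $B_a|_{\mathbb{S}^1}$ has a periodic orbit, it has a non-repelling one.
\item By your own argument its multiplier is real and positive, so the cycle is attracting or parabolic.
\item Its basin therefore contains a free critical point. The cycle is fixed pointwise by $\mathcal{I}$, so by symmetry the basin contains both $c_+$ and $c_-$.
\item Then $\overline{P(B_a)}$ is countable and cannot contain the continuum $\partial\Delta$, a contradiction.
\end{itemize}
The paper phrases this as saturating Shishikura's bound with $\{0\},\{\infty\},\Delta,\mathcal{I}(\Delta)$ and then producing a fifth cycle on $\mathbb{S}^1$.

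This argument only covers rational rotation number. If $\rho(B_a|_{\mathbb{S}^1})$ is irrational, there is no periodic orbit on the circle and Lemma~\ref{lemma:non-repelling-cycle-existence} is vacuous. The paper's proof as written does not address this case either, so it needs a separate argument. For instance, if $\mathbb{S}^1$ lies in a Herman ring, a refined Fatou--Shishikura count leaves no critical orbit for a Siegel cycle. The case $\mathbb{S}^1\subset\mathcal{J}(B_a)$, however, is not reached by counting alone.
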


\begin{proof}
    \textit{1. Absence of Siegel Disks:} We proceed by contradiction. Assume $B_a$ possesses a Siegel disk $\Delta$ of period $p \ge 1$. Since $\mathbb{S}^1$ is totally invariant and contains super-attracting points in its complement, $\Delta$ must be disjoint from $\mathbb{S}^1$. By the symmetry $\mathcal{I}(z)=1/\bar{z}$, there exists a disjoint mirror Siegel disk $\widetilde{\Delta} = \mathcal{I}(\Delta)$. These two disjoint cycles must attract distinct critical orbits. Since the orbits of $0, \infty, a, 1/\bar{a}$ are captured by the super-attracting fixed points, only the free critical points $c_+$ and $c_-$ remain. Thus, one attracts to $\Delta$ and the other to $\widetilde{\Delta}$.
    This configuration yields four non-repelling cycles: $\{0\}, \{\infty\}, \Delta, \widetilde{\Delta}$. This saturates the Shishikura inequality ($4 \leq 4$). However, $B_a|_{\mathbb{S}^1}$ is a circle diffeomorphism and must possess a non-repelling cycle on the circle. This fifth cycle contradicts the inequality. Thus, no Siegel disks exist.

    \textit{2. Absence of Herman Rings:} If a rational function has a Herman ring, two critical orbits must accumulate on different boundary components \cite{Shishikura1987}. For $1 \leq |a| \leq 2d+1$, both free critical points $c_\pm$ lie on the invariant circle $\mathbb{S}^1$, preventing them from accumulating on distinct boundaries. If $|a|<1$, the Julia set is the circle itself, precluding the existence of Fatou components other than the basins of $0$ and $\infty$.
\end{proof}

\begin{remark}
    In contrast to Theorem \ref{thm:rotation-domains}(2), if $|a| > 2d+1$, Herman rings may exist. In this range, the rotation number $\rho(t)$ of $B_{a,t}|_{\mathbb{S}^1}$ depends linearly on $t$ (specifically $\rho(t) = \rho_0 + t \pmod 1$). By choosing $t$ such that $\rho(t)$ is a Brjuno number, one can generate Herman rings as described in  \cite{Chu2018}.
\end{remark}

Next, we describe the topology of the immediate basins of the super-attracting fixed points $0$ and $\infty$.

\begin{theorem}[Basins of $0$ and $\infty$]
\label{thm:super-attracting-basins}
    For $|a| > 1$, the free critical points $c_\pm$ do not lie in the immediate basins $\mathcal{A}^*(0)$ or $\mathcal{A}^*(\infty)$. Consequently:
    \begin{enumerate}
        \item $\mathcal{A}^*(0)$ and $\mathcal{A}^*(\infty)$ are simply connected.
        \item The boundary $\partial\mathcal{A}^*(\infty)$ contains exactly $d$ fixed points (and by symmetry, so does $\partial\mathcal{A}^*(0)$).
    \end{enumerate}
\end{theorem}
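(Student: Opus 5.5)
The plan has three steps: locate the free critical points, deduce simple connectivity from the Riemann--Hurwitz count, and then count fixed points on the boundary by a B\"ottcher/external-ray argument.

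\textbf{Step 1: $c_\pm$ avoid the immediate basins.} For $1<|a|<2d+1$ (and $|a|=2d+1$), Proposition~\ref{prop:critical-points-location} puts $c_\pm$ on $\mathbb{S}^1$. Since $\mathbb{S}^1$ is invariant, their orbits stay on $\mathbb{S}^1$. Moreover $\mathbb{S}^1$ is disjoint from $\mathcal{A}^*(0)$ and $\mathcal{A}^*(\infty)$, because a point of $\mathbb{S}^1$ cannot converge to $0$ or $\infty$. For $|a|>2d+1$, the point $c_+$ lies on the ray through $a$ with $1<|c_+|<|a|$. I would argue as follows. The symmetry $\mathcal{I}$ swaps the two basins, so $\mathbb{S}^1$ separates $\mathcal{A}^*(0)\subset\mathbb{D}$ from $\mathcal{A}^*(\infty)\subset\widehat{\mathbb{C}}\setminus\overline{\mathbb{D}}$; these containments hold because each basin is connected, contains its fixed point, and misses $\mathbb{S}^1$. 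Hence only $c_+$ could lie in $\mathcal{A}^*(\infty)$, and symmetrically only $c_-$ in $\mathcal{A}^*(0)$. Suppose $c_+\in\mathcal{A}^*(\infty)$. Then $c_-\in\mathcal{A}^*(0)$ as well, and all critical points except those on the positive-modulus side are absorbed. In that case $B_a|_{\mathbb{S}^1}$ is a diffeomorphism with no critical orbit accumulating on $\mathbb{S}^1$, so $\mathbb{S}^1$ carries no critical orbits in its postcritical closure. A Denjoy/Ma\~n\'e type argument then shows the circle map is hyperbolic-expanding (Ma\~n\'e's theorem: a compact invariant set avoiding parabolic points and the postcritical closure is expanding). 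But an expanding degree-one map of the circle is impossible, since $\int G'=1$ forces $G'\le1$ somewhere. This is a contradiction.

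\textbf{Step 2: simple connectivity.} This is the standard argument for an immediate basin $U$ of a superattracting fixed point containing no critical points other than the fixed point itself. Take the B\"ottcher coordinate near $\infty$. Here the critical points $a,1/\bar a$ also need handling: $1/\bar a$ is a pole? No, $B_a(1/\bar a)=\infty$, so $1/\bar a$ is a critical preimage of $\infty$ and might lie in $\mathcal{A}^*(\infty)$. I would compute the local degree at $\infty$, which is $d+1$ (from $z^{d+1}$ times a factor tending to a constant). If $1/\bar a\in\mathcal{A}^*(\infty)$, the Riemann--Hurwitz formula applied to $B_a:U\to U$ of degree $k$ would give $\chi(U)$ via the critical count. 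Instead I would use the criterion ``the B\"ottcher map extends to all of $U$ iff no other critical point lies in $U$''. So I must show that $1/\bar a\notin\mathcal{A}^*(\infty)$. I expect $1/\bar a\in\mathbb{D}$ to hold, since $|a|>1$, so $1/\bar a$ lies in the wrong side of $\mathbb{S}^1$. Thus the only critical point in $\mathcal{A}^*(\infty)$ is $\infty$, and likewise for $0$ (the point $a$ lies outside $\mathbb{D}$). Then $B_a:\mathcal{A}^*(\infty)\to\mathcal{A}^*(\infty)$ is a proper map of degree $d+1$ with one critical point of multiplicity $d$. Riemann--Hurwitz gives $\chi(U)=(d+1)\chi(U)-d$, so $\chi(U)=1$ and $U$ is simply connected.

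\textbf{Step 3: fixed points on the boundary.} The B\"ottcher map conjugates $B_a|_U$ to $w\mapsto w^{d+1}$ on $\widehat{\mathbb{C}}\setminus\overline{\mathbb{D}}$. There are $d$ fixed external rays, of angles $j/d$. Each lands, since $U$ is simply connected and the Julia set is non-degenerate; by the Douady--Hubbard/Pommerenke landing theorem for periodic rays of polynomial-like basins, applied via local connectivity-free arguments, each lands at a fixed point that is repelling or parabolic. The fixed points are distinct because two fixed rays landing at the same point would cut off a region containing a fixed ray between them, contradicting rotation number zero of the ray portrait. Conversely, every boundary fixed point is the landing point of a fixed ray: a repelling fixed point in $\partial U$ is accessible, and its accessing rays are permuted with period one by the standard Goldberg--Milnor argument. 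Symmetry via $\mathcal{I}$ gives the statement for $\partial\mathcal{A}^*(0)$.

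The main obstacle is Step 1 for $|a|>2d+1$, where ruling out $c_+\in\mathcal{A}^*(\infty)$ needs the circle-map contradiction. It is also unclear whether fixed-ray distinctness and the converse in Step 3 hold without further hyperbolicity input.
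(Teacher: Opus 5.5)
Your Steps 1 and 2 are sound. For $|a|>2d+1$, Step 1 takes a different and self-contained route from the paper, which argues topologically with the maximal B\"ottcher domain and the pole. Your version runs as follows. If $c_+\in\mathcal{A}^*(\infty)$, then every critical point lies in $\mathcal{A}(0)\cup\mathcal{A}(\infty)$. So $B_a$ is hyperbolic with Fatou set $\mathcal{A}(0)\cup\mathcal{A}(\infty)$, hence $\mathbb{S}^1\subset\mathcal{J}(B_a)$. Uniform expansion on $\mathbb{S}^1$ then contradicts $\int_0^1 (G^n)'=1$. You should state $\mathbb{S}^1\subset\mathcal{J}(B_a)$ explicitly, since Ma\~n\'e's theorem only concerns subsets of the Julia set; hyperbolicity alone already gives the expansion. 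Step 2 is the paper's Riemann--Hurwitz count, and you include the needed check that $1/\bar a\notin\mathcal{A}^*(\infty)$.

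The genuine gap is Step 3: neither half of ``exactly $d$'' is established.

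Your distinctness argument contains no contradiction. Rotation number $0$ at a fixed point means precisely that every ray landing there is fixed, and several fixed rays can share a landing point. For example, for $z\mapsto z^3+2z$ both fixed rays (the two halves of the real axis) land at the repelling fixed point $0$.

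Your converse rests on a false principle. Goldberg--Milnor gives a common rotation number for the rays landing at a fixed point, not rotation number $0$. The repelling $\alpha$-fixed point of Douady's rabbit lies on the boundary of the basin of $\infty$, yet only period-$3$ rays land there. You also give no accessibility argument for parabolic boundary points.

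Distinctness can be rescued in this family by a degree count.
Suppose $R^\infty_{\theta_1}$ and $R^\infty_{\theta_2}$ land at the same point $z$. Let $\Omega$ be the component of $\widehat{\mathbb{C}}\setminus\overline{R^\infty_{\theta_1}\cup R^\infty_{\theta_2}}$ not containing $\overline{\mathbb{D}}\setminus\{z\}$; its angular width is $k/d$ with $1\le k\le d-1$. Since this Jordan curve is $B$-invariant, every point of the other component has the same number $N$ of preimages in $\Omega$.
Computing at a point near $\infty$ gives $N=k$, because the remaining $d$ preimages lie near the pole $1/\bar a\in\mathbb{D}$.
Computing at a point near $0$, the preimages lie near $0$ and near $a$, so $N\in\{0,d\}$. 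This is a contradiction.

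The upper bound, however, is exactly where the paper uses an ingredient you do not have. It takes the B\"ottcher coordinate to extend to a homeomorphism $\overline{\mathcal{A}^*(\infty)}\to\overline{\mathbb{D}}$, i.e.\ it takes $\partial\mathcal{A}^*(\infty)$ to be a Jordan curve. Then $B_a|_{\partial\mathcal{A}^*(\infty)}$ is conjugate to $w\mapsto w^{d+1}$ on the circle, and its fixed points correspond bijectively to the $d$-th roots of unity, giving both directions at once. That boundary regularity is the hyperbolicity-type input you suspected was missing. Without it, or an equivalent local-connectivity statement, ray landing alone cannot exclude extra boundary fixed points with nonzero rotation number.
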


\begin{proof}
    First, we locate the critical points. For $1<|a|\leq 2d+1$, $c_\pm \in \mathbb{S}^1$. Since $\mathbb{S}^1$ is invariant, $c_+ \notin \mathcal{A}^*(\infty)$. For $|a|>2d+1$, assume for contradiction that $c_+ \in \mathcal{A}^*(\infty)$. Let $U_0 \subset \mathcal{A}^*(\infty)$ be the maximal Böttcher domain. The invariant circle $\mathbb{S}^1$ cannot be contained in $U_0$ because the map is $1$-to-$1$ on the circle but $(d+1)$-to-$1$ in the basin. However, the preimage of the image of $U_0$ must contain the pole $1/\bar{a} \in \mathbb{D}$. This implies $\mathcal{A}^*(\infty)$ connects $\infty$ to $\mathbb{D}$, crossing $\mathbb{S}^1$. A basin intersecting the invariant circle must be symmetric, implying it contains both $0$ and $\infty$, a contradiction. Thus $c_+ \notin \mathcal{A}^*(\infty)$ and by symmetry $c_- \notin \mathcal{A}^*(0)$.

    \textit{Connectivity:} Since $\mathcal{A}^*(\infty)$ contains no free critical points, the map $B_a: \mathcal{A}^*(\infty) \to \mathcal{A}^*(\infty)$ is proper of degree $d+1$. By the Riemann-Hurwitz formula, if $m$ is the connectivity, $m-2 = (d+1)(m-2) + d$, implying $m=1$. Thus, the basin is simply connected. (Preimages are handled similarly via induction; components disjoint from $\mathbb{S}^1$ contain at most one critical point or the pole, preserving simple connectivity).

    \textit{Boundary Fixed Points:} Since $\mathcal{A}^*(\infty)$ contains only one critical point ($\infty$), the Böttcher coordinate $\phi$ conjugates $B_a$ to $z \mapsto z^{d+1}$ on $\mathbb{D}$. The $d$ fixed points of $z^{d+1}$ on $\mathbb{S}^1$ correspond via the homeomorphism $\phi^{-1}$ to $d$ fixed points on $\partial\mathcal{A}^*(\infty)$.
\end{proof}

Finally, we classify all other periodic Fatou components.

\begin{theorem}[General Periodic Components]
\label{thm:general-components}
    Let $|a| \ge 2d+1$. Let $U$ be a periodic Fatou component that is not a Herman ring and not $\mathcal{A}^*(0)$ or $\mathcal{A}^*(\infty)$. Then:
    \begin{enumerate}
        \item $U$ is symmetric under involution, i.e., $\mathcal{I}(U) = U$.
        \item The closure $\overline{U}$ intersects $\mathbb{S}^1$.
        \item $U$ is simply connected.
    \end{enumerate}
\end{theorem}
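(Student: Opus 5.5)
The plan is to exploit the two structural facts available when $|a|\ge 2d+1$: the symmetry $B_a\circ\mathcal{I}=\mathcal{I}\circ B_a$, and the critical-point count from Proposition~\ref{prop:critical-points-location}. In this regime only two critical points are free, $c_\pm$, and they are either mirror images $c_+=1/\bar c_-$ with $|c_+|>1$, or they coincide on $\mathbb{S}^1$. The points $0,\infty,a,1/\bar a$ are all absorbed by the superattracting basins. We also use that $B_a|_{\mathbb{S}^1}$ is a homeomorphism by Proposition~\ref{prop:remarks}, and that $\mathbb{S}^1$ is forward invariant.

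\textbf{Step 1: symmetry.} Let $U$ be a periodic component of period $p$, not a Herman ring, and not $\mathcal{A}^*(0)$ or $\mathcal{A}^*(\infty)$. By Theorem~\ref{thm:rotation-domains} it is not a Siegel disk. Hence by the classification of periodic Fatou components, $U$ is an attracting or parabolic immediate basin, and its cycle must attract a free critical orbit (Fatou's theorem).

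Suppose $\mathcal{I}(U)\neq U$. Then $\mathcal{I}(U)$ lies in the cycle of a mirror basin. There are two cases.
\begin{itemize}
\item If the mirror cycle is distinct from the cycle of $U$, the two cycles need two distinct critical orbits. These can only be $c_+$ and $c_-$, and this forces $|a|>2d+1$.
\item If $\mathcal{I}(U)=B_a^k(U)$ lies in the same cycle, a separate argument is needed.
\end{itemize}
In the first case we copy the Shishikura count from the proof of Theorem~\ref{thm:rotation-domains}. The cycles $\{0\}$, $\{\infty\}$, the cycle of $U$ and its mirror give four non-repelling cycles. Lemma~\ref{lemma:non-repelling-cycle-existence} (or the irrational case) then produces another non-repelling cycle on $\mathbb{S}^1$. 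In the irrational case, an analytic circle diffeomorphism gives either a Herman ring or Julia points in $\mathbb{S}^1$ in the closure of a critical orbit. Either way we get a contradiction with $2d+2$-type bounds. This is the delicate point: the Fatou–Shishikura bound for degree $2d+1$ is $4d$, not $4$. So I would instead count \emph{free} critical points: each attracting or parabolic cycle needs a critical point not already in $\mathcal{A}(0)\cup\mathcal{A}(\infty)$, and only $c_\pm$ qualify.

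When the rotation number is rational, the non-repelling cycle on $\mathbb{S}^1$ attracts a critical orbit. Its basin is symmetric, because a basin of a cycle on $\mathbb{S}^1$ meets its mirror. So it must attract $c_+$ or $c_-$, and by symmetry both. That leaves no critical point for $U$. In the same-cycle case, $\mathcal{I}(U)=B_a^k(U)$ with $U\neq\mathcal{I}(U)$. Then the single critical orbit attracted (say from $c_+$) has its mirror orbit $c_-$ attracted as well, so both free points go to this cycle. The circle's non-repelling cycle then has no critical point available, again a contradiction (unless it is this cycle, which gives symmetry).

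\textbf{Step 2: the closure of $U$ meets $\mathbb{S}^1$.} Given $\mathcal{I}(U)=U$ and $U$ connected, $U$ contains points $z$ and $1/\bar z$. Choose a path in $U$ joining them. This path must cross $\mathbb{S}^1$, unless $z\in\mathbb{S}^1$ already. So $U\cap\mathbb{S}^1\neq\emptyset$, which is stronger than required.

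\textbf{Step 3: simple connectivity.} This is the main obstacle. By Step 1, the whole cycle of $U$ absorbs at most the two free critical points, each symmetric component containing at most one. I would apply Riemann–Hurwitz along the cycle. Each proper map $B_a:U_j\to U_{j+1}$ satisfies
\[
\chi(U_j)=\deg\cdot\chi(U_{j+1})-\#\text{crit}.
\]
The key sub-claim is $\deg(B_a|_{U_j})=1+\#\text{crit}$. Since $U_j\cap\mathbb{S}^1$ is a union of arcs on which the map is injective, I expect the degree to be low. This is where care is needed. The route I would try first: at a component with a single simple critical point, the degree is 2 and $\chi$ is preserved; with no critical point, the degree is 1. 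Iterating around the cycle then shows that either all components are simply connected, or all have equal finite connectivity greater than one. The latter would make $B_a^p|_U$ a covering of an annulus, hence a Herman ring or an infinitely connected domain, contradicting the existence of an attracting or parabolic point. As an alternative, the maximal linearizing domain around the attracting point can be pulled back to exhaust $U$. Each pullback stays simply connected because it contains at most one critical point and that point is symmetric, so the pulled-back domain is a symmetric degree-$\le 2$ branched cover of a disk.
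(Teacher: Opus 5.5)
\paragraph{Main gap: Step 3.} The argument fails at the start, because the critical-point count is wrong. A symmetric component cannot contain exactly one free critical point. For $|a|>2d+1$ we have $c_-=\mathcal{I}(c_+)$, so if $\mathcal{I}(V)=V$ and $c_+\in V$, then $c_-\in V$ too; this is exactly what the paper uses. For $|a|=2d+1$ the unique free critical point is a double one.

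So the component $V$ of the cycle that carries the critical points satisfies $\chi(V)=D\,\chi(B(V))-2$. Even when $B(V)$ is a disk, this allows two outcomes: $D=3$ with $V$ a disk, or $D=2$ with $V$ an annulus. Your sub-claim $\deg=1+\#\mathrm{crit}$ is therefore the entire content of (3), and you give no argument for it. The one-critical-point version (``degree $2$ and $\chi$ preserved'') is also false in general, since $\chi(V)=2\chi(B(V))-1$ preserves $\chi$ only when $\chi=1$.

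The cycle-iteration route also cannot conclude even in principle. Riemann--Hurwitz says nothing when the components are infinitely connected. An infinitely connected immediate basin is perfectly compatible with an attracting or parabolic point, so arriving at ``Herman ring or infinitely connected'' is not a contradiction.

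\paragraph{How to close it.} The exhaustion you list as an alternative is the paper's route and is the right one, but it needs the correct count. By symmetry $c_+$ and $c_-$ reach $\partial U_0$ at the same moment, so the first pullback $U_1\supset\overline{U_0}$ contains both. You must then show that $B\colon U_1\to U_0$ (or $B^q$, for a cycle) has degree $3$ rather than $2$; the paper simply asserts this.
\begin{itemize}
\item For $|a|=2d+1$ it follows from the local degree $3$ at the double critical point.
\item For $|a|>2d+1$ you need an extra argument. For instance, $B|_{\mathbb{S}^1}$ is injective, so $U_1\cap\mathbb{S}^1=(B|_{\mathbb{S}^1})^{-1}(U_0\cap\mathbb{S}^1)$ is a single arc. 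A symmetric annulus that does not contain $\mathbb{S}^1$ has both complementary components $\mathcal{I}$-invariant. They cannot be swapped, since $\mathcal{I}$ fixes $\mathbb{S}^1$ pointwise. So both meet $\mathbb{S}^1$, and the annulus meets $\mathbb{S}^1$ in at least two arcs, which is impossible here.
\end{itemize}
Inductively every pullback is a disk meeting $\mathbb{S}^1$ in one arc, and the increasing union is simply connected.

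\paragraph{Steps 1 and 2.} These are otherwise sound. Counting free critical points is an improvement over the paper's ``$4\le 4$'', since the Fatou--Shishikura bound in degree $2d+1$ is $4d$. Your order is also the reverse of the paper's: you get symmetry first, and then the path from $z$ to $1/\bar z$ gives the stronger $U\cap\mathbb{S}^1\neq\emptyset$.

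One case slips through Step 1. Suppose $U$ is a petal at a parabolic cycle on $\mathbb{S}^1$ none of whose attracting directions is tangent to the circle. Then $U$ and $\mathcal{I}(U)$ lie in distinct cycles of components over the same periodic orbit. The non-repelling cycle supplied by Lemma~\ref{lemma:non-repelling-cycle-existence} may be that orbit itself, so your contradiction does not follow. Running the sign-change argument again fixes this. That orbit repels along $\mathbb{S}^1$ on both sides, so $\psi=g-\mathrm{id}$ must turn down or touch zero at another fixed point of $g=f^q$. The basin of that point contains an arc of $\mathbb{S}^1$, and it would need a free critical point beyond $c_\pm$, which do not exist.
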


\begin{proof}
    \textit{1. Symmetry and Intersection:} First, we note that $B_a$ has no super-attracting cycles other than $0$ and $\infty$ for $|a|>2d+1$ (similar proof to Theorem \ref{thm:rotation-domains}, counting critical orbits).
    Assume $U$ is a fixed component (the periodic case follows by considering $B_a^k$). If $\overline{U} \cap \mathbb{S}^1 = \emptyset$, then by symmetry, there exists a distinct component $V = \mathcal{I}(U)$. The set of non-repelling cycles would then include $\{0\}, \{\infty\}, U, V$. This saturates the Shishikura bound ($4 \leq 4$). However, $B_a|_{\mathbb{S}^1}$ must admit a non-repelling cycle, creating a 5th cycle, which is a contradiction. Thus, $\overline{U} \cap \mathbb{S}^1 \neq \emptyset$.
    Since $U$ intersects the invariant set $\mathbb{S}^1$, its associated limit set (e.g., attracting point $p$) lies on $\mathbb{S}^1$. Since $\mathcal{I}(p)=p$, standard convergence arguments imply $z \in U \iff \mathcal{I}(z) \in U$, so $\mathcal{I}(U)=U$.

    \textit{2. Connectivity:} Because $U$ is symmetric and intersects $\mathbb{S}^1$, its immediate basin must contain both free critical points (due to symmetry) or the single critical point if they coincide.
    If $U$ is an immediate attracting or parabolic basin, let $U_0 \subset U$ be the maximal domain for linearized coordinates. $\partial U_0$ contains both $c_+$ and $c_-$. The preimage $U_1 = B_a^{-1}(U_0)$ containing the fixed point maps properly with degree 3. By Riemann-Hurwitz, $m_{U_1}-2 = 3(1-2)+2 \implies m_{U_1}=1$. The sequence of preimages forms a nested exhaustion of simply connected sets, so $U$ is simply connected.
\end{proof}

\section{Bi-accessible Cycles}
\label{sec:biaccessibility}

This section analyzes the dynamics of the Blaschke family 
\[
B_a(z) = z^{d+1} \left( \frac{z - a}{1 - \bar{a} z} \right)^d,
\]
focusing on adjacent parameters $a=re^{2\pi i \alpha}$ with $r>1$. Recall that the restriction of this map to the unit circle may act as a homeomorphism, endomorphism, or diffeomorphism, depending on the value of $r$.

\begin{definition}
 \label{def:bi-accessible}
    A point $z_0\in \mathcal{J}(B_{r,\alpha})$ is \emph{bi-accessible} if it is the landing point of a Böttcher ray from $\mathcal{A}^*(\infty)$ and a Böttcher ray from $\mathcal{A}^*(0)$ simultaneously. Due to the symmetry $\mathcal{I}(z)=1/\bar{z}$, any bi-accessible point must belong to the unit circle $\mathbb{S}^1$.
\end{definition}

The main goal of this section is to prove the existence of bi-accessible cycles for these parameters, as described in Theorem~\ref{thm:biaccessibility}. This topological property is a prerequisite for the connectivity arguments presented in Section~\ref{sec:connectedness}. We provide the statement of theorem again for the reader's convenience.

\vspace{0.5cm}

\noindent
{\small {\bf Theorem A} \textit{Let $(r,\alpha)$ be an adjacent parameter that belongs to a tongue $T_{p/q}$, with $0<p/q<1$ in lowest terms. Then the map $B_{r,\alpha}$ has a bi-accessible cycle in its Julia set $\mathcal{J}(B_{r,\alpha})$ with rotation number $\rho = p/q$.}}

\vspace{0.5cm}

The strategy of our proof is constructive, relying on the puzzle pieces technique developed by Yoccoz and Branner-Hubbard for polynomials, then extended for rational functions in \cite{Petersen2004}. We note that the specific use of bi-accessibility as a topological obstruction to connectivity was inspired by the work of Bonifant et al. \cite{Bonifant2018}, who employed a similar strategy to analyze the topology of Fatou components for cubic maps.

We construct a nested sequence of compact sets in the dynamical plane that isolate the cycle of interest for the specific case of the tongue $T_{1/2}$ and any given value of $d\geq 1$. We handle the homeomorphism, endomorphism, and diffeomorphism cases separately. The proof for any rational tongue $T_{p/q}$ follows from the interval partition established in Proposition~\ref{prop:gen_interval}.

\begin{proposition}
    \label{prop:boundary_dynamics_and_fixed_points}
    Let $B = B_{r,\alpha}$ be a Blaschke map of degree $2d+1$ with adjacent parameters. The restriction $B|_{\partial\mathcal{A}^*(\infty)}$ is topologically conjugate to the multiplication map $m_{d+1}(t) = (d+1)t \pmod{\mathbb{Z}}$. Consequently, $B$ possesses exactly $2d+2$ distinct fixed points in $\widehat{\mathbb{C}}$, distributed as follows:
    \begin{enumerate}
        \item The super-attracting fixed points $0$ and $\infty$;
        \item $d$ points on the boundary $\partial\mathcal{A}^*(\infty)$;
        \item $d$ points on the boundary $\partial\mathcal{A}^*(0)$.
    \end{enumerate}
\end{proposition}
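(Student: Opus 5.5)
We prove the conjugacy first and then count fixed points; the count is then forced by the degree.

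\textbf{Step 1: B\"ottcher coordinates and local connectivity.} By Theorem~\ref{thm:super-attracting-basins}, for $|a|=r>1$ the immediate basin $\mathcal{A}^*(\infty)$ is simply connected and contains no free critical point. Hence the B\"ottcher map $\phi\colon \mathcal{A}^*(\infty)\to \widehat{\mathbb{C}}\setminus\overline{\mathbb{D}}$ is a conformal isomorphism conjugating $B$ to $w\mapsto w^{d+1}$. To pass to the boundary we need $\partial\mathcal{A}^*(\infty)$ to be locally connected. For adjacent parameters both free critical points lie in the immediate basin of an attracting (or superattracting) point $z_0\in\mathbb{S}^1$. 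The remaining critical points $0,\infty,a,1/\bar a$ lie in the basins of $0$ and $\infty$. So every critical orbit converges to an attracting cycle, $B$ is hyperbolic, and the Julia set is uniformly expanding in a suitable metric.

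The standard argument then applies. The equipotential curves $\phi^{-1}(\{|w|=R^{1/(d+1)^n}\})$ are parametrized by $\theta$, and they converge uniformly as $n\to\infty$, because consecutive curves are joined by pullbacks of a fixed compact family of arcs. These pullbacks shrink geometrically, by hyperbolicity or equivalently by Lemma~\ref{lemma:diam0} applied to inverse branches on a neighborhood of $\mathcal{J}$ avoiding the postcritical set. The limit gives a continuous extension $\gamma\colon \mathbb{R}/\mathbb{Z}\to\partial\mathcal{A}^*(\infty)$ satisfying $B\circ\gamma=\gamma\circ m_{d+1}$. So every external ray lands, and the restriction of $B$ is at least semiconjugate to $m_{d+1}$.

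\textbf{Step 2: injectivity of $\gamma$.} This is the main obstacle. Suppose two rays $\theta\neq\theta'$ land at the same point. Together with $\infty$ they bound a Jordan domain $W$ disjoint from $\mathcal{A}^*(\infty)$, and $W$ contains points of $\mathcal{J}$. We will push $W$ forward. Because $m_{d+1}$ expands the angular gap, some iterate maps the sector to one whose complement meets few points of $\partial\mathcal{A}^*(\infty)$. The aim is a contradiction: we want the image of $W$ to avoid all of $\mathcal{J}$ other than the landing point, or else to force $W$ to contain a full preimage of $\mathcal{A}^*(\infty)$ or a critical value it cannot contain.

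I would first try the argument that uses the structure of this map directly. By the symmetry under $\mathcal{I}$, the set $\mathbb{S}^1\cap\mathcal{J}$ separates $\mathcal{A}^*(0)$ from $\mathcal{A}^*(\infty)$. The Fatou components in $W$ would be preimages of the basin of $z_0$, and each of these meets $\mathbb{S}^1$ or its preimages. Using the degree $d+1$ of $B$ on $\mathcal{A}^*(\infty)$ against the total degree $2d+1$, one would show that a pinched sector must contain a preimage of the pole $1/\bar a$, whose location is controlled, and obtain a contradiction. If this route fails, fall back on the weaker statement: $\gamma$ is a semiconjugacy that is injective on the fixed angles. That is all that items 2 and 3 of the statement require.

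\textbf{Step 3: the fixed-point count.} $B$ has degree $2d+1$, so it has $2d+2$ fixed points counted with multiplicity. The fixed angles of $m_{d+1}$ are $k/d$ for $k=0,\dots,d-1$. These are $d$ repelling fixed points of $m_{d+1}$, so their landing points are fixed points of $B$. They are distinct by Step 2, or by Theorem~\ref{thm:super-attracting-basins}(2) directly. The same holds for $\partial\mathcal{A}^*(0)$ via $\mathcal{I}$. Together with $0$ and $\infty$ this gives $2d+2$ fixed points, provided the two families of $d$ points are disjoint or the overlap is accounted for.

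A common fixed point would be bi-accessible and would lie on $\mathbb{S}^1$. We must still account for $z_0$, the attracting fixed point on $\mathbb{S}^1$. Since $z_0$ is in the Fatou set, it is not one of the ray landing points. The count is then reconciled by degree: the landing points on $\mathbb{S}^1$ common to both boundaries are counted once, and $z_0$ fills the remaining slot. We use the Lefschetz/holomorphic index count, which totals $2d+2$ with multiplicity. All fixed points other than $z_0$, $0$ and $\infty$ are repelling, so each has multiplicity one, and distinctness follows. I would state the final distribution as in the proposition, with the points on $\mathbb{S}^1$ (including $z_0$) shared between the two boundary lists. This sharing should be checked carefully against the count.
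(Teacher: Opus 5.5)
Your outline matches the paper's: B\"ottcher coordinate, a boundary extension conjugating $B$ to $m_{d+1}$, the fixed angles $k/d$, reflection by $\mathcal{I}$, and a degree count. Your Step~1 is a correct and more careful version of what the paper asserts in one line. Even so, the proposal does not prove the statement.

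First, the proposition asserts a \emph{conjugacy}, and Step~2 never shows that $\gamma$ is injective. You sketch a plan and then fall back to a semiconjugacy, which is strictly weaker. Even ``injective on the fixed angles'' does not follow from a semiconjugacy, since two fixed rays may land at the same point. Quoting the earlier theorem on the basins of $0$ and $\infty$ is circular, because its proof uses the same homeomorphic extension. Hyperbolicity plus simple connectivity of the basin does not give a Jordan boundary in general (think of $z^2-1$), so the specific geometry of $B$ has to enter. The ingredient you suggest, a preimage of the pole, is not the relevant one: $1/\bar a$ and the second component of $B^{-1}(\mathcal{A}^*(\infty))$ both lie in $\mathbb{D}$.

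Here is an argument that works.
\begin{itemize}
\item Invariance of $\mathbb{S}^1$ gives $B^{-1}(\mathcal{A}^*(\infty))\setminus\mathbb{D}=\mathcal{A}^*(\infty)$. Let $W_0$ be the component of $\widehat{\mathbb{C}}\setminus\overline{\mathcal{A}^*(\infty)}$ containing $\mathbb{D}$. Every other component $W$ lies outside $\overline{\mathbb{D}}$ and is mapped properly onto some complementary component.
\item That image is never $W_0$. Otherwise $a\in W$, so the component $U$ of $B^{-1}(\mathbb{D})$ containing $a$ lies in $W$ (because $B(\partial W)\cap\mathbb{D}=\emptyset$). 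Then $c_\pm\in\overline{U}\cap\mathbb{S}^1\subset\partial W\subset\mathcal{J}$, contradicting adjacency.
\item Hence the forward orbit of $W$ never meets $\mathcal{A}^*(\infty)$ or $W_0$, which contains $\mathbb{D}$ and neighbourhoods of the attracting cycle. By hyperbolicity $W$ then contains no Fatou point, which is absurd because $\mathcal{J}$ has empty interior.
\item So $\widehat{\mathbb{C}}\setminus\overline{\mathcal{A}^*(\infty)}=W_0$. If two rays landed at one point, the complementary component of those two rays not containing $W_0$ would lie in $\overline{\mathcal{A}^*(\infty)}$. By the F.~and M.~Riesz theorem it would also contain other landing points, hence would meet $\mathcal{J}=\partial\mathcal{A}(0)$ and therefore $\mathcal{A}(0)$, which is impossible.
\end{itemize}

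Second, Step~3 goes wrong. Where the proposition is used (adjacent parameters in $T_{p/q}$ with $0<p/q<1$), the attracting orbit on $\mathbb{S}^1$ has period $q\ge 2$. There is no attracting fixed point $z_0$; the definition of $AT_{p/q}$ says ``fixed'', but it must mean periodic. Your reconciliation places $z_0$ among the points shared by the two boundary lists, which is impossible: $z_0\in\mathcal{F}$, while $\partial\mathcal{A}^*(\infty)\cup\partial\mathcal{A}^*(0)\subset\mathcal{J}$, as you note yourself one sentence earlier. Moreover, if such a $z_0$ existed, your correct remark that hyperbolicity makes every fixed point simple would leave only $2d-1$ distinct fixed points for the $2d$ landing points. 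That forces a point common to both boundaries and makes the claimed $2+d+d$ splitting false.

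What Step~3 actually needs is that none of the $d$ fixed landing points on $\partial\mathcal{A}^*(\infty)$ lies on $\mathbb{S}^1$; equivalently, $B|_{\mathbb{S}^1}$ has no fixed point. Given that, $\overline{\mathcal{A}^*(\infty)}\cap\overline{\mathcal{A}^*(0)}\subset\mathbb{S}^1$ makes the two families disjoint. Simplicity plus the degree count then shows these $2d+2$ points are all the fixed points. The paper's proof takes this disjointness for granted (``summing these contributions yields $2d+2$ distinct fixed points''), so you must supply this step yourself. As written, your proposal leaves it open and then settles it in the wrong direction.
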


\begin{proof}
    First, we establish the conjugacy. As $\mathcal{A}^*(\infty)$ is simply connected and the map is hyperbolic for adjacent parameters, the Böttcher coordinate $\phi\colon \mathcal{A}^*(\infty) \to \mathbb{D}$ extends to a homeomorphism $\phi\colon \overline{\mathcal{A}^*(\infty)} \to \overline{\mathbb{D}}$. This map conjugates $B$ on the closure of the basin to the monomial map $w \mapsto w^{d+1}$ on $\overline{\mathbb{D}}$. Restricting this action to the boundary $\mathbb{S}^1$, the dynamics correspond to the angle doubling map $m_{d+1}(t)$.

    Next, we classify the fixed points. A rational map of degree $2d+1$ has $2d+2$ fixed points in $\widehat{\mathbb{C}}$ (counted with multiplicity). We explicitly identify them:
    \begin{itemize}
        \item The points $0$ and $\infty$ are super-attracting fixed points by construction.
        \item The map $m_{d+1}$ on the circle has exactly $d$ distinct fixed points given by angles $t = k/d$ for $k=0, \dots, d-1$. via the conjugacy $\phi$, these correspond to $d$ distinct fixed points on $\partial\mathcal{A}^*(\infty)$.
        \item By the symmetry of the Blaschke product, the map is invariant under the involution $\mathcal{I}(z) = 1/\bar{z}$. Therefore, the $d$ fixed points on $\partial\mathcal{A}^*(\infty)$ map to $d$ distinct fixed points on $\partial\mathcal{A}^*(0)$.
    \end{itemize}
    Summing these contributions yields $2 + d + d = 2d+2$ distinct fixed points. Thus, all fixed points of $B$ are simple and accounted for.
\end{proof}

\subsection{Admissible Words}
\label{sec:rules-admissibility}

To facilitate subsequent constructions within the three possible scenarios where $B|_{\mathbb{S}^1}$ acts as a diffeomorphism, homeomorphism, or endomorphism, we establish a criterion for forming words that will serve as labels for certain sets (the \emph{drops}). For a general degree $d \ge 1$, let $\mathcal{A}_d$ denote an alphabet consisting of $2d+2$ symbols: $\underline{0},\underline{1},\dots,\underline{d},0,1,\dots,d$. For $k\geq 1$, a word of length $k$ given by $i_1\dots i_k$, with $i_j\in\mathcal{A}_d$, is said to be an \emph{admissible word} if it satisfies the following rules:

\begin{itemize}
	\item[$a)$] The last symbol is not underlined: $i_k\not\in \left\{\underline{0},\underline{1},\dots,\underline{d}\right\}$.
	\item[$b)$] An underlined symbol must be followed by $d$: $i_j\in \left\{\underline{0},\underline{1},\dots,\underline{d}\right\} \Rightarrow i_{j+1}=d$.
	\item[$c)$] A non-underlined symbol cannot be followed by $d$: $i_j\in\left\{0,1,\dots,d\right\} \Rightarrow i_{j+1}\neq d$.
\end{itemize}

We denote the set of finite admissible words with symbols in $\mathcal{A}_d$ as $\Xi_{\mathcal{A}_d}$.

\begin{remark}
It is important to note that for any $d\geq 1$, there are cases where a word $i_1\dots i_k$ is admissible but its prefix $i_1\dots i_{k-1}$ is not. This happens if and only if $i_{k-1}\in\left\{\underline{0}, \dots, \underline{d}\right\}$. However, by the admissible rules, if $i_1\dots i_k$ is admissible, then the prefix $i_1\dots i_{k-2}$ must also be admissible since $i_{k-2}$ cannot be an underlined symbol $\underline{0},\underline{1},\dots,\underline{d}$.
\end{remark}

\subsection{The construction}
We will examine parameters $(r,\alpha )\in T_{1/2}$ for which $(r,\alpha)$ is an adjacent parameter. It is important to emphasize that there are subtle differences in the upcoming constructions of the puzzle pieces depending on whether  $B|_{\mathbb{S}^1}$ acts as a homeomorphism, an endomorphism or a diffeomorphism, and the cycle that traps the two free critical points is super-attracting or attracting. Therefore, we will treat them as distinct cases.

Due to the symmetry of $B$ with respect to $\mathbb{S}^1$, the preimage of $\mathbb{S}^1$ under $B$ encloses two distinct Jordan domains. One of these domains is located within $\mathbb{D}$, while the other resides in $\widehat{\mathbb{C}}\setminus \overline{\mathbb{D}}$.

We will denote the bounded domain within $\widehat{\mathbb{C}}\setminus \overline{\mathbb{D}}$ as $U$ and establish the positive orientation of its boundary with respect to the zero of $B$ at $a\in\mbox{int}\left(\overline{U}\right)$.  

Since $z=a\in U$ is a critical point of multiplicity $d-1$, then
\[
	B|_{\overline{U}}\colon \overline{U}\xrightarrow[]{\ d:1\ } \overline{\mathbb{D}}
\]	 
is a branched covering map of degree $d$, with  $a$ as a unique branched point.

\begin{definition}[Drops and roots]
\label{def:drops-roots-homeo}
For $k\geq 0$, we define a \emph{drop of level $k$} as any connected component of $B^{-k}\left(U\right)$, where $U$ itself is defined as a \emph{drop of level $0$}. Since $U$ is a Jordan domain, any drop of level $k$ is also a Jordan domain. If $V$ is a drop of level $k$, we define the \emph{root of $V$} as the unique boundary point given by $\{z\}=B^{-k}(c)\cap \partial V$.
\end{definition}

\subsubsection{The Homeomorphism Case}
\label{subsec:HomeoCase}

In this case, the Blaschke product $B$ has a single critical point of multiplicity $2$ on the unit circle, denoted as $c=e^{2\pi i\alpha }$. The preimage of $\mathbb{S}^1$ under $B$ takes the shape of a figure-eight curve at $c$ (see Figure \ref{fig:Step02}).

\paragraph{Super-attracting case}
We focus on the following specific setting: $d \ge 2$, $r = 2d + 1$, and $\alpha$ for which the critical point $c$ belongs to a super-attracting $2$-cycle on $\mathbb{S}^1$ with rotation number $1/2$. We denote this cycle by $\{c, v\}$. Observe that $v = B(c)$ is a critical value that also lies on $\mathbb{S}^1$. Because the closure $\overline{U}$ intersects $\mathbb{S}^1$ only at the critical point $c$ (its root), and since $v \neq c$, we conclude that $v$ does not belong to $\overline{U}$. Consequently, the only point in the forward orbit of $c$ that lies in $\overline{U}$ is $c$ itself, i.e. $\overline{U} \cap \mathcal{O}^+(c) = \{c\}$.

\vspace{0.5cm}
\noindent\textbf{Labeling drops and roots.}
 We will assign admissible words as labels of drops at each level, following the admissibility rules given in Subsection \ref{sec:rules-admissibility}.

\vspace{0.5cm}
\noindent\textbf{Drops of level $1$}

For $i = 0,\dots , d-1$, we denote as $U_i$ the connected component of $B^{-1}(U)$ such that $\overline{U}_i \cap \overline{U}\neq \emptyset$. Each such drop $U_i$ has its root at the point $z_i \in B^{-1}(c) \cap \partial U_i$. The indices $i$ are assigned in increasing manner by following the positive orientation of $\partial U$ starting at its root at $c$. 

Additionally, we denote by $U_d$ the connected component of $B^{-1}(U)$ such that $B^{-1}(U)\cap\mathbb{S}^1\neq\emptyset$. It is worth noting that $U_d$ is the unique drop of level $1$ with its root $z_d$, which coincide with the critical value $v$, located on the unit circle.

For each $i = 0,1,\dots , d$, the map 
\[
	B|_{U_i }\colon \overline{U}_i\to \overline{U}
\]
is holomorphic, bijective, and $B'(z)|_{U_i}\neq 0$ for all $z\in \overline{U}_i $, given that the critical point $c$ and its positive orbit belong to $\mathbb{S}^1$. Consequently, $B|_{\overline{U}_i}$ is a biholomorphism.

\vspace{0.5cm}
\noindent\textbf{Drops of level $2$}

As before, we start by labeling the drops of level two that do not intersect the unit circle. In this context, let $i=0,1,\dots , d$ and $j=0,\dots , d-1$. We define $U_{ij}$ as the connected component of $B^{-1}\left(U_{j}\right)$, such that $\overline{U}_{ij} \cap \overline{U}_{i}\neq \emptyset $. As previously, this labeling is increasing with respect to the positive orientation along $\partial U_{i}$, starting at its root.

As $\partial U_d$ contains a critical value, $z_d$, we state the following lemma, whose proof relies on analyzing the structure of preimages of boundary curves near critical points, similar to the arguments used in Proposition 3.1 of \cite{Canela2020} .

\begin{lemma}
\label{lemma:lobes}
There exists a preimage of $\partial U_d$ under $B$ within $\mathbb{C}\setminus \mathbb{D}$, forming a figure-eight curve. Additionally, each of the lobes of this figure-eight curve lies on opposite sides of the straight line connecting $0$, $c$, and $a$.
\end{lemma}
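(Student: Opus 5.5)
The plan is to read off the shape of $B^{-1}(\partial U_d)$ from the local behaviour of $B$ at the critical point $c$, together with the reflection symmetry of $B$ across the line through $0$, $c$ and $a$.

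\emph{Symmetry.} Write $c=e^{2\pi i\alpha}$ and recall $a=rc$. The map $B$ commutes with the anti-holomorphic reflection $\sigma(z)=c^{2}\bar z$ in the line $L=\{tc:t\in\mathbb{R}\}$. This is a direct computation: $B(\sigma z)=c^{2(2d+1)}\overline{B(z)}\,c^{-4d}\cdot(\ldots)$, and it reduces to $\sigma(B(z))$ up to the rotation already normalized in Lemma~\ref{lemma:conjugacy-t-to-0}. If the normalization leaves a residual rotation, I would instead use the reflection through the line containing the fixed direction, and adjust accordingly.

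Since $L\cap\mathbb{S}^1=\{c,-c\}$ and $U$ is the unique drop of level $0$ containing $a\in L$, we get $\sigma(U)=U$.

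\emph{The curve is a figure-eight.} $\partial U_d$ is a Jordan curve through the critical value $v=B(c)\in\mathbb{S}^1$. Near $v$, the drop $U_d$ lies outside $\overline{\mathbb{D}}$ and is tangent to $\mathbb{S}^1$; this holds because $U_d$ is a univalent preimage of $U$, and $U$ touches $\mathbb{S}^1$ only at $c$.

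Take a small disk $W$ around $c$. On $W$, $B$ is $3$-to-$1$ onto a neighbourhood of $v$, since $c$ has multiplicity $2$. The preimage of $\mathbb{S}^1$ near $c$ therefore consists of three arcs through $c$: one is $\mathbb{S}^1$ itself, and the other two form the crossing of the figure-eight $B^{-1}(\mathbb{S}^1)$.

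The preimage of the curve $\partial U_d$, which passes through $v$, is then a set of three analytic arcs through $c$ making angles $\pi/3$. Two of these lie in $\widehat{\mathbb{C}}\setminus\overline{\mathbb{D}}$, and by the symmetry $\mathcal{I}$ two lie in $\mathbb{D}$. I would make this precise using the local normal form $B(z)=v+\kappa(z-c)^3+\cdots$ with $\kappa\neq 0$.

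Follow the component of $B^{-1}(\partial U_d)$ that lies in $\mathbb{C}\setminus\mathbb{D}$ and passes through $c$. Away from $c$, this preimage avoids the critical points: $c$ is the only free critical point, $a$ maps to $0\notin\partial U_d$, and $\infty$ maps to $\infty$. So the component is a union of smooth arcs meeting only at $c$. The map $B$ restricted to $\widehat{\mathbb{C}}\setminus\overline{\mathbb{D}}$ minus the other preimages is a covering of degree $2$ onto a neighbourhood of $\overline{U_d}$ that is branched only over $v$. Hence the component containing $c$ is the preimage of a loop passing once through the branch value with local degree $2$ on that side. By Riemann--Hurwitz applied to $B^{-1}(U_d)$, the component is $V=B^{-1}(U_d)\ni c$ with two Jordan lobes $V^\pm$ attached at $c$, and each lobe maps univalently onto $U_d$. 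This is the figure-eight.

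\emph{The lobes lie on opposite sides of $L$.} Because $\sigma(v)=v$ and $\sigma(\mathbb{S}^1)=\mathbb{S}^1$, and $U_d$ is the unique level-one drop rooted on $\mathbb{S}^1$, we have $\sigma(U_d)=U_d$. Therefore $\sigma$ preserves $B^{-1}(\partial U_d)$ and fixes $c$.

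The two exterior tangent directions at $c$ are $\sigma$-images of each other. They sit at angles $\pm\pi/3$ from the outward normal, and that normal is the direction of $L$. So $\sigma$ swaps them, and it swaps the lobes: $\sigma(V^+)=V^-$.

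A lobe cannot cross $L$. Indeed, $\sigma$ fixes $L\cap V^+$ pointwise, so any point of $L$ in $V^+$ would lie in $V^+\cap V^-=\emptyset$. Hence $V^+$ and $V^-$ lie in the two opposite half-planes bounded by $L$.

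\emph{Main obstacle.} I expect the hardest step to be the global one: showing that the local three-arc picture closes up into exactly two Jordan lobes, rather than a curve that wanders or meets $\partial U$. This requires control of the branching of $B$ over $\overline{U_d}$. I would get it from the fact that the only critical value in $\overline{U_d}$ is $v$, so $\overline{U_d}$ contains no other critical values, combined with Theorem~\ref{thm:SteinmetzInverse} on $U_d$ slit along a ray from $v$.
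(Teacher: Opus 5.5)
\textbf{Gap: the reflection symmetry is asserted, not proved, and it fails for general $a$.} Your whole ``opposite sides'' argument rests on $B\circ\sigma=\sigma\circ B$, but the computation does not give that. With $a=rc$, $|c|=1$ and $\sigma(z)=c^{2}\bar z$ one finds $\frac{\sigma(z)-a}{1-\bar a\,\sigma(z)}=c^{2}\,\overline{\bigl(\frac{z-a}{1-\bar a z}\bigr)}$, hence
\[
B_a\circ\sigma=c^{4d}\,\sigma\circ B_a .
\]
The factor $c^{4d}$ is not absorbed by Lemma~\ref{lemma:conjugacy-t-to-0}, which has already been used to set $t=0$. Your fallback line does not exist either. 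A reflection $z\mapsto\lambda\bar z$ commuting with $B_a$ must preserve the zeros $0$ and $a$ with their multiplicities, so it fixes $a$ and forces $\lambda=c^{2}$.

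So for a general parameter, $\sigma(U)=U$, $\sigma(U_d)=U_d$ and $\sigma(V^+)=V^-$ are all unjustified. What rescues the argument is the specific parameter of this lemma, and you need to prove that. For $r=2d+1$ one has $v=B(c)=c^{2d+1}$. Writing $\omega=c^{2d}=e^{i\theta}$, the cycle condition $B(v)=c$ becomes $\omega^{d+2}\bigl(\frac{\omega-r}{1-r\omega}\bigr)^{d}=1$. The argument of the left side has $\theta$-derivative $(d+2)-d\,\frac{r^{2}-1}{|1-r\omega|^{2}}\ge 1$ and total winding $2$, so the only solutions are $\omega=\pm1$. Since $\omega=1$ would make $c$ fixed, you get $c^{2d}=-1$, $v=-c$ and $c^{4d}=1$. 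Only then may you use $\sigma$.

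\textbf{Errors in the local and global steps (fixes are standard).}

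\emph{Local picture at $c$.} $U_d$ is not tangent to $\mathbb{S}^1$ at $v$. $B^{-1}(\mathbb{S}^1)$ consists of three arcs through $c$ at mutual angles $\pi/3$, so $U$ has a corner of opening $\pi/3$ at $c$, bisected by $L$. $U_d$ is a conformal copy of $U$ near $v$, so it has the same corner at $v$. Consequently $B^{-1}(\overline{U_d})$ near $c$ consists of three sectors of opening $\pi/9$. Two are centered at $\pm\pi/3$ from the outward normal and lie outside $\overline{\mathbb{D}}$; the third is centered on the inward normal and lies inside $\mathcal{I}(U)$. Your appeal to $\mathcal{I}$ here is invalid, since $\mathcal{I}(U_d)\neq U_d$.

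\emph{Global picture.} Riemann--Hurwitz is the wrong tool, and $c\notin B^{-1}(U_d)$. Instead:
\begin{itemize}
\item $U_d$ is simply connected and contains no critical value, so Theorem~\ref{thm:SteinmetzInverse} applies directly; no slit is needed.
\item Hence $B^{-1}(U_d)$ is $2d+1$ disjoint univalent copies of $U_d$.
\item Each inverse branch extends to a homeomorphism of $\overline{U_d}$, because $v$ is the only critical value in $\overline{U_d}$.
\item Injectivity forces the three sectors at $c$ into three distinct components $V^+,V^-,V^0$.
\item $\partial V^+\cap\partial V^-=\{c\}$, since $B$ is locally injective at every other point of $B^{-1}(\partial U_d)$.
\end{itemize}

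\textbf{Comparison with the paper.} With both repairs your route genuinely differs from the paper's. The paper takes the two curves through $c$ for granted and excludes the nested arrangement by tracking where $\infty$ and the pole $1/\bar a$ would be sent. It then reads off the side condition from the local sector picture at $c$ only. Your univalent-branch argument makes the nested case impossible from the outset. The (justified) reflection then upgrades the local side condition to the global statement that each open lobe misses $L$.
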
 

\begin{proof}
As $\overline{U}_d$ is a Jordan domain, its boundary $\partial U_d$ forms a simple closed curve that contains the critical value $z_d$. Let $\gamma$ be the connected component of $B^{-1}(\partial U_d)$ in $\mathbb{C}\setminus\mathbb{D}$ that contains $c$. Thus, $\gamma$ consists of two simple closed curves, denoted as $\gamma_1$ and $\gamma_2$, intersecting at the point $c$.

Each $\gamma_1$ and $\gamma_2$ define the boundaries of two distinct connected components on $\mathbb{C}$. In general, we have two different arrangements for $\gamma_1$ and $\gamma_2$, as shown in Figure \ref{Figure:ocho}.

\begin{figure}
\begin{center}
	\includegraphics[scale=0.5]{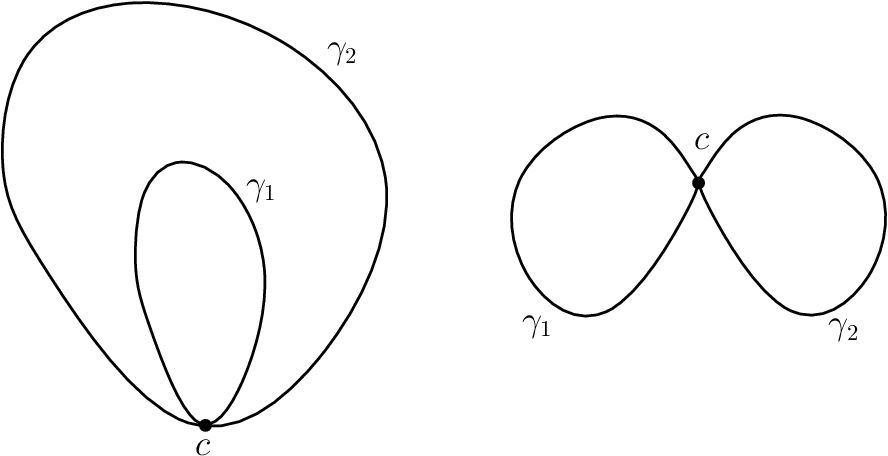}
	\caption{The two a priori arrangements for the preimages $\gamma_1$ and $\gamma_2$ of $\partial U_d$, which both intersect at the critical point $c$. The proof of Lemma \ref{lemma:lobes} demonstrates that the nested arrangement (left) leads to a contradiction, forcing the preimages to form a figure-eight curve (right).}
	\label{Figure:ocho}
\end{center}
\end{figure}

 Assume that the configuration of $\gamma_1$ and $\gamma_2$ resembles the left side in Figure \ref{Figure:ocho}. Let $B_1$ be the bounded component of $\gamma_1$, and denote by $E_1$ and $E_2$ the unbounded components of $\gamma_1$ and $\gamma_2$, respectively. Then, since $\gamma_1$ and $\gamma_2$ are mapped one-to-one to $\partial U_d$, two possibilities emerge:

\begin{enumerate}
    \item Assume $B$ bijectively maps both the bounded region $B_1$ and the unbounded region $E_2$ onto $U_d$. Since $E_2$ is unbounded, it must contain $\infty$. Because $B(\infty)=\infty$ and $B(E_2) = U_d$, this would imply $\infty \in U_d$. However, $U_d$ is a bounded Jordan domain. This is a contradiction. Thus, this scenario is impossible.

    \item Assume $B$ maps the region $E_1\setminus \overline{B}_1$ 2-to-1 to $U_d$, which implies that $B$ maps the bounded region $B_1$ to $\widehat{\mathbb{C}}\setminus \overline{U}_d$. The image $B(B_1)$ therefore contains $\infty$.
    
     The preimages of $\infty$ under $B$ are $\infty$ itself and the pole $1/\bar{a}$. Since $B_1$ is a bounded set, it cannot contain $\infty$, so it must contain the pole: $1/\bar{a} \in B_1$.
    
     However, the boundary curve $\gamma_1 = \partial B_1$ lies entirely in $\mathbb{C}\setminus\overline{\mathbb{D}}$. Since $B_1$ is the bounded region enclosed by $\gamma_1$, $B_1$ itself must also be contained entirely in $\mathbb{C}\setminus\overline{\mathbb{D}}$. This follows because if $B_1$ intersected the closed unit disk $\overline{\mathbb{D}}$, its boundary $\gamma_1$ would necessarily intersect $\overline{\mathbb{D}}$ as well, contradicting our premise that $\gamma_1$ lies entirely outside the closed disk.
    
     We have now reached a contradiction: the pole $1/\bar{a}\in\mathbb{D}$ must belong to $B_1$, while $B_1 \subset \mathbb{C}\setminus\overline{\mathbb{D}}$. Therefore, this scenario is also impossible.
\end{enumerate}

Consequently, both bounded components $B_1$ and $E_2$ are mapped onto $U_d$, confirming that the arrangement of $\gamma_1$ and $\gamma_2$ must be as shown to the right in Figure \ref{Figure:ocho}. In other words, there exists a connected component of $B^{-1}(\partial U_d)$ that forms a figure-eight curve.

Since the critical point c has multiplicity 2, then each sector depicted in purple around the critical point $c$ in Figure \ref{Figure:sectors}, is mapped one-to-one under $B$ onto the purple sector around $z_d$. Therefore, each of the lobes of the figure-eight curve must lie on opposite sides of the straight line formed by $0$, $c$, and $a$.
\end{proof}

\begin{figure}
\begin{center}
	\includegraphics[scale=0.9]{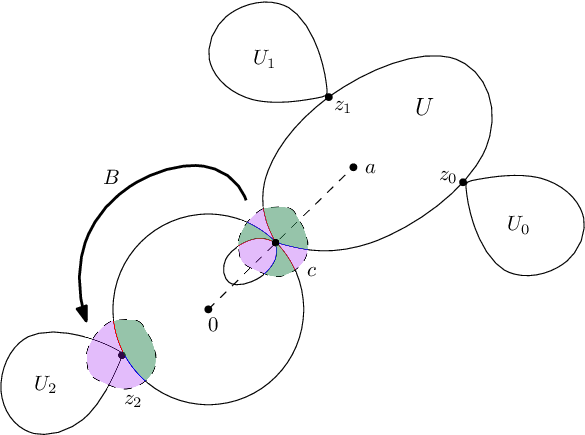}
	\caption{$B$ maps the purple sectors around $c$ onto a single purple sector around the critical value $z_2$. This mapping forces the two lobes of the figure-eight curve $\gamma = B^{-1}(\partial U_d)$ intersecting at $c$ to lie on opposite sides of the broken line connecting the origin, $c$, and $a$.}
	\label{Figure:sectors}
\end{center}
\end{figure}

For $i=\underline{0}, \underline{1},\dots  \underline{d}$, let $U_{id}$ denote the connected components of $B^{-1}(U_d)$. By Lemma \ref{lemma:lobes}, one of the connected components of $B^{-1}\left(\partial U_d\right)$ is a figure-eight curve, with one lobe on each side of the straight line joining $0$ and $a$. More precisely, $U_{\underline{0}d}$ is the bounded component of the lobe of $B^{-1}\left(\partial U_d\right)$ lying on the right-hand side of the segment from $0$ to $a$, and $U_{\underline{d}d}$ is the bounded component of the lobe on the left-hand side. The remaining preimages $U_{id}$ for $i=\underline{1},\dots  \underline{d-1}$ are those connected components of $B^{-1}(U_d)$ for which $\overline{U}_{id}\cap \left(\overline{U}\setminus \{c\}\right)\neq \emptyset$, as shown in Figure \ref{fig:Step02}. The indices then increase along $\partial U$ following the positive orientation, starting from its root $c$.

\begin{figure}
\begin{center}
	\includegraphics[scale=0.8]{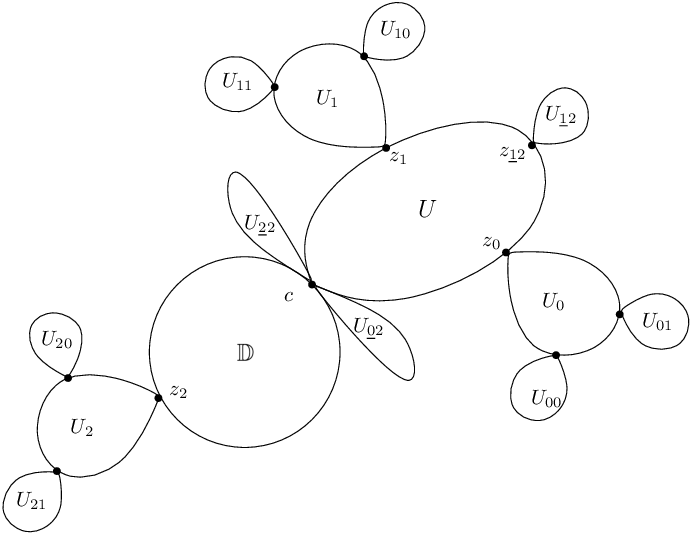}
	\caption{Sketch of the drops and roots of level 1 for the $d=2$ case. The preimages of the level 0 drop $U$ are shown: $U_0$ and $U_1$ are attached to $\partial U$ at their roots $z_0$ and $z_1$, while $U_2$ is attached to $\mathbb{S}^1$ at its root $z_2$ (which is also the critical value $v$).}
	\label{fig:Step02}
\end{center}
\end{figure}

\vspace{0.5cm}
\noindent\textbf{Drops of level $k\geq 3$}\label{sec:homeo_super_labeling_k3}

Now, we extend the labeling to drops $U_{i_1\dots i_k}$, with $k\geq 3$ and $i_1\dots i_k\in\Xi_{\mathcal{A}_d}$.
For $i_1\dots i_k\in \Xi_{\mathcal{A}_d} $, we recursively define $U_{i_1i_2\dots i_k}$ as follows, assuming that we have already defined all drops up to level $k-1$:
\begin{enumerate}
	\item If $i_{k-1}\in \left\{0,1,\dots , d\right\}$, then
	$U_{i_1i_2\dots i_k}$ is the connected component of $B^{-1}\left(U_{i_2\dots i_{k}}\right)$ such that $\overline{U}_{i_1\dots i_k}\cap\overline{U}_{i_1\dots i_{k-1}}\neq\emptyset $. This labeling follows the positive orientation of $\partial U_{i_1\dots i_{k-1}}$, starting at its root. 
	
	\item If $i_{k-1}\in \left\{\underline{0}, \underline{1}, \dots \underline{d}\right\}$, then $U_{i_1\dots i_k}$ denotes the connected component of $B^{-1}\left(U_{i_2\dots i_k}\right)$ such that $\overline{U}_{i_1\dots i_k}\cap\overline{U}_{i_1\dots i_{k-2}}\neq\emptyset $. This itinerary follows the positive orientation of $\partial U_{i_1\dots i_{k-2}}$, starting at its root. Specifically, $U_{i_1\dots i_k}$ is the $(k-2)$-th preimage of $U_{i_{k-1}}$ with its root on $\partial U_{i_1\dots i_{k-2}}$.
\end{enumerate}

In general, we denote the root of $U_{i_1\dots i_k}$ as $z_{i_1\dots i_k}$. The labels defined by the admissibility rules establish a bijection between the set of admissible words of length $k$ and the set of drops of level $k$.

\begin{lemma}
Let $U_{i_1\dots i_k}$ be a drop of level $k\geq 0$, and let $z$ be its root. Then, either $z\in\mathbb{S}^1$, or $z$ lies on the boundary of a drop of level $0\leq l< k$.
\end{lemma}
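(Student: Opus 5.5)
We argue by induction on the level $k$, following the recursive labeling scheme, which attaches each drop of level $k$ either to a drop of lower level or to $\mathbb{S}^1$. The statement is vacuous in the sense that for $k=0$ the root of $U$ is $c\in\mathbb{S}^1$, so the base case holds. For $k=1$ the roots $z_0,\dots,z_{d-1}$ lie on $\partial U$ (level $0$) by construction, and $z_d=v\in\mathbb{S}^1$. For $k=2$ the roots $z_{ij}$ with $j\le d-1$ lie on $\partial U_i$ by construction, while the roots of the drops $U_{\underline{i}d}$ lie either on $\partial U\setminus\{c\}$ (for $\underline{1},\dots,\underline{d-1}$) or at $c\in\mathbb{S}^1$, the double point of the figure-eight given by Lemma~\ref{lemma:lobes} (for $\underline{0}$ and $\underline{d}$).

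For the inductive step we take an admissible word $i_1\dots i_k$ with $k\ge 3$ and use the key observation that the root is preserved by the dynamics. Indeed, $z=z_{i_1\dots i_k}$ is the unique point of $B^{-k}(c)\cap\partial U_{i_1\dots i_k}$. Since $B$ maps $U_{i_1\dots i_k}$ onto $U_{i_2\dots i_k}$, and maps boundary to boundary (the branched covering is proper), $B(z)$ lies in $B^{-(k-1)}(c)\cap\partial U_{i_2\dots i_k}$. Hence $B(z)$ is the root of $U_{i_2\dots i_k}$.

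We then split according to the two clauses of the recursive definition.
\begin{itemize}
\item In clause 1, where $i_{k-1}$ is not underlined, $U_{i_1\dots i_k}$ is chosen with $\overline{U}_{i_1\dots i_k}\cap\overline{U}_{i_1\dots i_{k-1}}\neq\emptyset$. We must show that this contact point is exactly the root. The contact point lies on $\partial U_{i_1\dots i_{k-1}}$. Applying $B$, it lies on $\partial U_{i_2\dots i_{k-1}}\cap\partial U_{i_2\dots i_k}$, which by the inductive hypothesis (applied through the labeling at level $k-1$) is the root of $U_{i_2\dots i_k}$. Pulling back by the branch of $B^{-1}$ that is univalent near $\overline{U}_{i_1\dots i_k}$ identifies the contact point with $z$. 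Thus $z\in\partial U_{i_1\dots i_{k-1}}$, which has level $k-1<k$.
\item In clause 2, the same argument places $z$ on $\partial U_{i_1\dots i_{k-2}}$, of level $k-2$. The exception is the case $i_1\dots i_{k-2}$ empty with figure-eight lobes, which is the $k=2$ case already treated and gives $z=c\in\mathbb{S}^1$.
\end{itemize}

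Separately, if $z$ maps into $\mathbb{S}^1$ we must check that either $z\in\mathbb{S}^1$ or $z$ sits on an earlier drop. This follows because $B^{-1}(\mathbb{S}^1)$ consists of $\mathbb{S}^1$ together with $\partial U$ and its mirror image.

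The main obstacle is the injectivity needed to identify the contact point with the root. We need that two drops of consecutive levels touch in at most one point, and that $B$ is injective on a neighborhood of $\overline{U}_{i_1\dots i_k}$ away from the critical point $c$. For this we use that $B|_{\overline{U}_i}$ is a biholomorphism, which holds because $\overline{U}_i\cap\mathcal{O}^+(c)=\emptyset$ except at $c$. We also use that distinct drops of the same level are disjoint Jordan domains whose closures meet only at preimages of $c$. The critical point $c$, at which the figure-eight forms, is the only place where this local injectivity fails, and that case is handled explicitly by Lemma~\ref{lemma:lobes}.
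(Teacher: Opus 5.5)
Your argument is correct in substance, but it takes a different and heavier route than the paper. The paper never uses the labeling. Since $B^k(z)=c\in\mathbb{S}^1$, let $s\le k$ be the first time with $B^s(z)\in\mathbb{S}^1$. If $z\notin\mathbb{S}^1$, then $s\ge 1$ and $B^{s-1}(z)\in B^{-1}(\mathbb{S}^1)\setminus\mathbb{S}^1$, i.e.\ $B^{s-1}(z)\in\partial U\setminus\{c\}$. So $z$ lies on the boundary of a component of $B^{-(s-1)}(U)$, which is a drop of level $s-1<k$. This is exactly the fact you mention in passing in your ``Separately'' paragraph, and by itself it proves the lemma, with no induction, no injectivity, and no dependence on how drops are named.

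Your induction buys something sharper: the root is the contact point of $U_{i_1\dots i_k}$ with its designated parent $U_{i_1\dots i_{k-1}}$ or $U_{i_1\dots i_{k-2}}$, which is the attachment the recursive labeling presupposes. It pays for this in two ways.
\begin{itemize}
\item The existence of the contact point is taken from the labeling scheme, whose well-definedness the paper only asserts.
\item Your inductive step invokes the claim that $\partial U_{i_2\dots i_{k-1}}\cap\partial U_{i_2\dots i_k}$ consists of the root of $U_{i_2\dots i_k}$. That is stronger than the lemma, so it has to be made the induction statement, with its base cases checked.
\end{itemize}

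Finally, the injectivity you single out as the main obstacle is unnecessary. The root is by definition the unique point of $B^{-k}(c)\cap\partial U_{i_1\dots i_k}$. Take any $w\in\partial U_{i_1\dots i_k}$ that lies on $\mathbb{S}^1$ or on the boundary of a lower-level drop. Then $B^k(w)\in\partial U$ because $w\in\partial U_{i_1\dots i_k}$, and $B^k(w)\in\mathbb{S}^1$ by forward invariance of $\mathbb{S}^1$. Hence $B^k(w)\in\mathbb{S}^1\cap\partial U=\{c\}$, and $w=z$ immediately.
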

\begin{proof}
Let $s$ be the smallest non-negative integer such that $B^s(z)\in\mathbb{S}^1$. If $z$ is not on $\mathbb{S}^1$, then $s>0$, and $B^{s-1}(z)\in\partial U\setminus \{c\}$. Let $l=s-1<k$. Therefore, $z$ belongs to the boundary of a drop of level $l$.
\end{proof}

The action of $B$ on a drop in level $k$ translates to a right-hand shift in the labels,
\[
	B\left(U_{i_1i_2\dots i_k}\right)=\left\{\begin{array}{lr}
	U_{i_2\dots i_k}, & \mbox{ if } k\geq 2,\\
	U, & \mbox { if } k=1,
	\end{array}\right.
\]
for any word $i_1\dots i_k \in \Xi_{\mathcal{A}_d}$.

\begin{remark}
\label{remark:super-attracting-cycle-roots}
We are considering the super-attracting scenario where the critical point $c \in \mathbb{S}^1$ belongs to the 2-cycle $\{c, z_d\}\subset \mathbb{S}^1$. Because $B|_{\mathbb{S}^1}$ is a homeomorphism, the only preimages of $c$ that lie on $\mathbb{S}^1$ must belong to this cycle. Specifically, for any $k \ge 0$, the set $B^{-k}(c) \cap \mathbb{S}^1$ contains only $c$ if $i_k$ is even and only $z_d$ if $k$ is odd.

The set 
\begin{equation} \label{eq:S_Ad_definition}
	\mathcal{S}_{\mathcal{A}_d} = \left\{ i_1\dots i_k\in \Xi_{\mathcal{A}_d}\colon i_j \in \left\{d, \underline{0}, \underline{d} \right\}, \ j\in \{1,\dots k\} \right\}, 
\end{equation}
identifies the admissible words whose corresponding root $z_{i_1\dots i_k}$ is one of the two points in the cycle $\{c, z_d\}$. That is, a root $z_{i_1\dots i_k}$ lies on $\mathbb{S}^1$ if and only if the word $i_1\dots i_k$ belongs to $\mathcal{S}_{\mathcal{A}_d}$, and this root must be either $c$ or $z_d$.
\end{remark}

\begin{figure}
\begin{center}
	\includegraphics[scale=1.2]{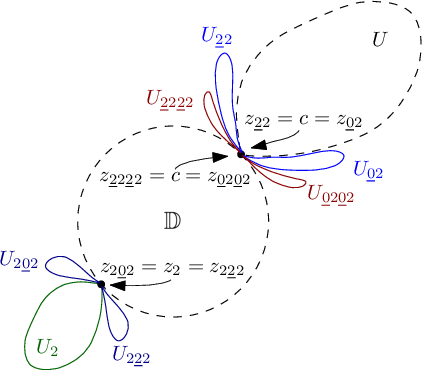}
	\caption{Sketch of drops whose roots alternate between the points $c$ and $z_2$ of the 2-cycle for $d=2$. Level 1 drop ($U_2$, in green) and level 3 drops ($U_{2\underline{0}2}$ and $U_{2\underline{2}2}$, in dark blue) are rooted at $z_2$. Level 2 drops ($U_{\underline{0}2}$ and $U_{\underline{2}2}$, in blue) and level 4 drops ($U_{\underline{0}2\underline{0}2}$ and $U_{\underline{2}2\underline{2}2}$, in red) are rooted at $c$.}
	\label{fig:multipleroots}
\end{center}
\end{figure}

\begin{remark}
\label{remark:biholomorphism}
Let $k\geq 2$. Then, the drops of level $k$ with labels in $\Xi_{\mathcal{A}_d}\setminus \mathcal{S}_{\mathcal{A}_d}$ do not intersect the forward orbit of $c$. Therefore,
\[
	B|_{\overline{U}_{i_1i_2\dots i_k}}\colon \overline{U}_{i_1\dots i_k}\to \overline{U}_{i_2\dots i_k},
\]
is a biholomorphism for $i_1\dots i_k \in \Xi_{\mathcal{A}_d}\setminus \mathcal{S}_{\mathcal{A}_d}$.
\end{remark}

\noindent\textbf{Limbs. }
For each $j\in\left\{0, d-1\right\}$, let $j^k$ represent the word of length $k$ with $j$ in each entry, and let $\overline{j}$ denote the infinite word with $j$ in each entry. Now, consider the following sets that we will refer to as \emph{limbs}:
\begin{eqnarray*}
	L_{\overline{j}} &=& \overline{U\cup \bigcup_{k\geq 1} U_{j^k}}= \bar{U}\cup\bar{U}_{j}\cup\bar{U}_{jj}\cup\cdots .
\end{eqnarray*}

We will say that the limb $L_{\overline{j}}$ \emph{starts at $U$}, in the sense that it is the drop in $L_{\bar{j}}$ of the smallest level. For $i= 0,1,\dots d$, we denote by $L_{i \cdot \overline{j}}$ each of the connected components of $B^{-1}\left(L_{\overline{j}} \right)$ located in $\mathbb{C}\setminus\mathbb{D}$, which start at $U_{i}$.

In general, for each $k\in\mathbb{N}$ and $i_1\dots i_k\in\Xi_{\mathcal{A}_d}$, we recursively define $L_{i_1\cdots i_k \cdot \overline{j}}$ as the connected component of $B^{-1}\left(L_{i_2\cdots i_{k}\cdot \overline{j}}\right)$ that starts at $U_{i_1\dots i_k}$.
\begin{figure}
\begin{center}
	\includegraphics[scale=0.7]{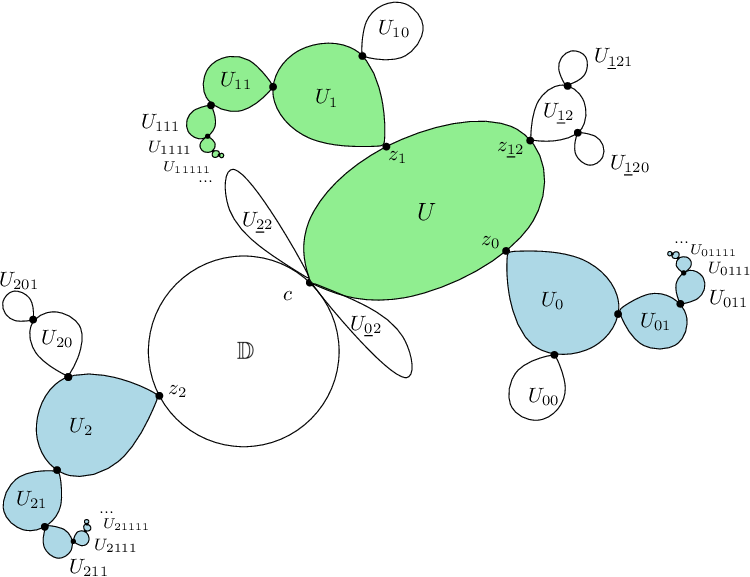}
	\caption{Sketch of the structure of the limb $L_{\overline{1}}$ for $d=2$. The limb $L_{\overline{1}}$ (green) is depicted. Also shown (blue) are two of its preimages under $B$: $L_{0\cdot \overline{1}}$, originating from drop $U_0$, and $L_{2\cdot \overline{1}}$, originating from drop $U_2$. The third preimage, $L_{1\overline{1}}$ (originating from $U_1$), is contained within $L_{\overline{1}}$ itself. Note that $B$ maps both blue limbs onto $L_{\overline{1}}$. Furthermore, $L_{2\cdot \overline{1}}$ is disjoint from both $L_{\overline{1}}$ and $L_{0\cdot \overline{1}}$, whereas $L_{0\cdot \overline{1}}$ intersects $L_{\overline{1}}$ only at the root $z_0$, the point where drop $U_0$ attaches to drop $U$.}
\end{center}
\end{figure}

\begin{lemma}[Diameter of $\overline{U}_{j^k}$]
\label{lemma:diamUto0}
	Let $j\in\{0, d-1\}$, and consider the drop of level $k$ given by $U_{j^k}\subset L_{\overline{j}}$. Then
	\[
		\operatorname{diam}\left(\overline{U}_{j^k}\right) \to 0\ \mbox{ as }\ k\to \infty.
	\]
\end{lemma}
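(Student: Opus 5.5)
The plan is to realize $U_{j^k}$ as the image of $U$ under the $k$-th iterate of a single inverse branch of $B$, and then to apply Lemma~\ref{lemma:diam0}.

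By construction, $U_j$ is the component of $B^{-1}(U)$ attached to $\partial U$ at the root $z_j$. Since $j\in\{0,\dots,d-1\}$ is not the symbol $d$, the word $j^k$ is admissible, contains no underlined symbols, and does not belong to $\mathcal{S}_{\mathcal{A}_d}$. By Remark~\ref{remark:biholomorphism} (and the level-one discussion), each map $B\colon \overline{U}_{j^k}\to\overline{U}_{j^{k-1}}$ is a biholomorphism. Let $\phi$ be the inverse branch of $B$ sending $U$ onto $U_j$. Then $\phi$ sends $U_{j^{k-1}}$ onto $U_{j^k}$, because $U_{j^k}$ is the component of $B^{-1}(U_{j^{k-1}})$ attached to $U_{j^{k-1}}$ along the same combinatorial position. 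Hence $\overline{U}_{j^k}=\phi^k(\overline{U})$, and it suffices to show that $\operatorname{diam}\phi^k(\overline{U})\to0$.

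Next we need a simply connected domain $D$ with the following properties:
\begin{itemize}
\item $D$ contains $L_{\overline{j}}$;
\item $D$ is disjoint from the postcritical set;
\item $\phi$ extends to $D$ with $\phi(D)\subset D$.
\end{itemize}
The postcritical set consists of $0$, $\infty$ and the cycle $\{c,v\}\subset\mathbb{S}^1$, together with the orbit of $a$ (which lands on $0$) and the orbit of $1/\bar a$ (which lands on $\infty$). A natural candidate is a slightly thickened copy of $U$ union the drops $U_{j^k}$, removing a small neighborhood of the root $c$ and of the points $a,\infty$ (note that $a\in U$ itself, so some care is needed).

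The cleaner route is to work with the \emph{closures} minus the critical point. Because $a$ is an interior branch point of $B|_{\overline U}$ while $\phi$ only needs to be defined near $\overline{U}_{j}\cup\overline{U}_{jj}\cup\cdots$, take $D$ to be a simply connected neighborhood of $\overline{U}_j$ inside $U\cup U_j\cup\{z_j\}$ (suitably thickened). This $D$ contains no critical values, since $B(\overline{U}_j)=\overline U$ meets the postcritical set only at $c$, and $c\notin\overline{U}_j$ because $z_j\neq c$. By Theorem~\ref{thm:SteinmetzInverse}, $\phi$ is then defined on a neighborhood $D'$ of $\overline U$ minus a neighborhood of $c$. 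Since $\phi(\overline U)=\overline U_j$ is compactly inside $D'$ away from $c$, we get $\phi(D')\subset D'$.

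The domain $D'$ meets $\mathcal{J}(B)$, since $\partial U\subset\mathcal{J}$. Applying Lemma~\ref{lemma:diam0} to $K=\overline{U}_j$ gives $\operatorname{diam}\phi^{k-1}(\overline U_j)=\operatorname{diam}\overline U_{j^k}\to0$. Alternatively, Lemma~\ref{thm:Contraction} shows that $\phi$ is a strict Poincaré contraction of $D'$ (a strict inclusion $\phi(D')\Subset D'$ implies this by Schwarz–Pick). Then $\phi^k(\overline U_j)$ shrinks in hyperbolic diameter, and hence in Euclidean diameter, since everything stays in a compact subset of $D'$; the sets accumulate at the unique fixed point of $\phi$, namely the landing fixed point of the limb.

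The main obstacle is constructing the forward-invariant simply connected domain $D'$ avoiding the postcritical set. The issue is that $\overline{U}$ touches $\mathbb{S}^1$ exactly at the postcritical point $c$, and $a\in U$ is critical (though not a critical value of $\phi$'s domain, as values matter). I would handle this by noting that only critical \emph{values} obstruct the branch: $B(a)=0\notin D'$ and $v\notin\overline U$. I would then excise a small disk around $c$ from a neighborhood of $\overline U$, checking that the drops $U_{j^k}$, $k\ge1$, stay a definite distance from $c$ because their roots $z_{j^k}\notin\mathbb{S}^1$.
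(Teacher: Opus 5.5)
Your route is the paper's: write $\overline U_{j^k}$ as iterates of one inverse branch and apply Lemma~\ref{lemma:diam0}. The gap is the hypothesis that lemma needs: a simply connected $D'$ avoiding the postcritical set $P=\{0,\infty,c,v\}$, with $\overline U_j\subset D'$ and $\phi(D')\subset D'$. Your justification (``$\phi(\overline U)=\overline U_j$ is compactly inside $D'$, hence $\phi(D')\subset D'$'') fails on two counts.

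First, $\overline U_j$ is a separate drop touching $\overline U$ only at its root $z_j$. So it does not lie in a thin neighbourhood of $\overline U$ minus a disk at $c$, and even $K=\overline U_j\subset D'$ is unproved. Second, $\phi(K)\subset D'$ for a single compact $K$ says nothing about $\phi(D')$. The collar of $D'$ along $\partial U$ is carried to a collar along $\partial U_j$, then to one along $\partial U_{jj}$, and so on. An invariant $D'$ must therefore contain a neighbourhood of every $\overline U_{j^k}$. Building such a domain, simply connected and free of $P$, requires knowing in advance how $\overline U_{j^k}$ behaves for large $k$, which is exactly what the lemma asserts. The remark that $z_{j^k}\notin\mathbb S^1$ gives no distance from $c$ that is uniform in $k$.

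Two minor points. $\partial U\not\subset\mathcal J$, since $B(\partial U)=\mathbb S^1$ meets the basin of $\{c,v\}$, although $D'\cap\mathcal J\neq\emptyset$ does hold. Also, Lemma~\ref{thm:Contraction} assumes strict contraction rather than proving it.

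The paper's proof takes a small neighbourhood $D$ of $\overline U_j$ disjoint from $P$ and also invokes Lemma~\ref{lemma:diam0} without checking $\phi_j(D)\subset D$, so your concern is well founded. The clean repair is to drop invariance altogether.

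First, $\overline U_j$ misses $P$. The points $0$ and $\infty$ are clearly excluded. A point of $\overline U_j\cap\mathbb S^1$ maps into $\overline U\cap\mathbb S^1=\{c\}$, so it equals $v$ and hence the root $z_j\in\partial U$, which is impossible since $v\notin\overline U$. Take a simply connected $D\supset\overline U_j$ with $D\cap P=\emptyset$.

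Since $P$ is forward invariant and contains all critical values of every $B^n$, the inverse of the biholomorphism $B^n\colon\overline U_{j^{n+1}}\to\overline U_j$ (Remark~\ref{remark:biholomorphism}) extends to a univalent branch $g_n$ of $B^{-n}$ on $D$. Moreover $g_n(D)\cap P=\emptyset$ because $B^n(P)\subset P$, so $\{g_n\}$ is normal by Montel.

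Every limit is constant. Suppose $g_{n_k}\to g$ with $g$ nonconstant. Pick $z_0\in D\cap\mathcal J$, which exists since $B^2(\overline U_j)=\overline{\mathbb D}\supset\partial\mathcal A^*(0)$, and a small disk $N\ni z_0$. Then $g_{n_k}(N)$ contains a fixed disk $\Delta$ about $g(z_0)\in\mathcal J$, so $B^{n_k}(\Delta)\subset N$ for all large $k$. This contradicts $B^n(\Delta)\supset\mathcal J$ for $n$ large.

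Hence $\operatorname{diam}\overline U_{j^{n+1}}=\operatorname{diam} g_n(\overline U_j)\to0$ spherically. It also tends to $0$ in the Euclidean metric, because all drops lie in the bounded set $\mathbb C\setminus\mathcal A^*(\infty)$.
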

\begin{proof}
	Fix $j\in \{0, d-1\}$. The Remark \ref{remark:biholomorphism} justifies the existence of an analytic inverse of $B$, denoted by $\phi_j$, satisfying $B\circ \phi_j|_{\overline{U}_j} = \mbox{Id}|_{\overline{U}_j}$. Let $\overline{U}_{j^k}:= \phi^{k-1}_j\left(\overline{U}_j\right)$. Since the compact set $\overline{U}_j$ is disjoint from the forward orbit of the critical point , which is also a compact set, there exists some $\delta > 0$ such that the $\delta$-neighborhood of $\overline{U}_j$, denoted by $D$, is also disjoint from the forward orbit of the critical point. Moreover, $D$ is simply connected and intersects the Julia set since $B^2\left(\overline{U}_j\right)= \overline{\mathbb{D}}$ and $\mathcal{A}(0)\subset \overline{\mathbb{D}}$. Hence, by Lemma \ref{lemma:diam0} we can conclude that
	\[
		\operatorname{diam}\left(\overline{U}_{j^k}\right) \to 0\ \mbox{ as }\ k\to \infty.
	\]
\end{proof}

\begin{remark}
\label{remark:geometric_convergence}
The proof of Lemma \ref{lemma:diamUto0} establishes that the inverse branch $\phi_j$ is an analytic map on a simply connected domain $D$ containing $ \overline{U}_j$ that is disjoint from the postcritical set. The family of iterates $\{\phi_j^n\}$ is normal on $D$, and every limit function is constant.

By the Schwarz-Pick Lemma, any such map is a strict contraction with respect to the Poincaré metric on $D$. On the compact set $K = \overline{U}_j$, this implies uniform contraction. By the equivalence of the Poincaré and Euclidean metrics on a compact set, this means there exists a constant $\lambda \in (0, 1)$ and a constant $C$ such that:
\[
    \text{diam}(\overline{U}_{j^k}) = \text{diam}(\phi_j^{k-1}(\overline{U}_j)) \leq C \cdot \lambda^{k-1}
\]
This geometric convergence ensures that the series $\sum_{k=1}^{\infty} \text{diam}(\overline{U}_{j^k})$ is convergent.
\end{remark}

\begin{lemma}
\label{lemma:intersection_is_point}
Let $j\in\{0,d-1\}$. Then, the intersection
\begin{equation}
\label{eq:FixedPointInBoundary}
	\bigcap_{k=1}^\infty L_{j^k\cdot \overline{j}}
\end{equation}
is a single point, denoted as $\zeta_j $, which is fixed under $B$ and lies on $\mathbb{C}\setminus \overline{\mathbb{D}}$.
\end{lemma}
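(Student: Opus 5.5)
The plan is to show that the limbs $L_{j^k\cdot\overline{j}}$ form a nested sequence of compact connected sets whose diameters tend to zero. Their intersection is then a single point, and invariance under $B$ forces that point to be fixed.

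\textbf{Nesting.} I would first check that $L_{j^{k+1}\cdot\overline{j}}\subset L_{j^k\cdot\overline{j}}$. By definition, $L_{j\cdot\overline{j}}$ is the component of $B^{-1}(L_{\overline{j}})$ that starts at $U_j$. Since $B(U_{j^{m+1}})=U_{j^m}$ and the drops $U_{j^{m+1}}$ are attached along the chain $U_j,U_{jj},\dots$, this component is exactly $\overline{\bigcup_{m\ge 1}U_{j^m}}$, which is contained in $L_{\overline{j}}$. Pulling back by the inverse branch $\phi_j$ from the proof of Lemma~\ref{lemma:diamUto0} (a biholomorphism on these drops by Remark~\ref{remark:biholomorphism}), induction gives
\[
L_{j^k\cdot\overline{j}}=\phi_j^{k}(L_{\overline{j}})=\overline{\textstyle\bigcup_{m\ge k}U_{j^m}},
\]
and these sets are nested. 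Each one is compact, being a closure. Each is also connected, since consecutive drops $\overline{U}_{j^m}$ and $\overline{U}_{j^{m+1}}$ meet at the root $z_{j^{m+1}}$. Hence the intersection is nonempty.

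\textbf{Diameters.} Next I would bound $\operatorname{diam} L_{j^k\cdot\overline{j}}$. Any two points of $\bigcup_{m\ge k}\overline{U}_{j^m}$ can be joined through the chain of consecutive drops, which meet at their roots. So
\[
\operatorname{diam}\bigl(L_{j^k\cdot\overline{j}}\bigr)\le \sum_{m\ge k}\operatorname{diam}\bigl(\overline{U}_{j^m}\bigr).
\]
By Remark~\ref{remark:geometric_convergence}, the right-hand side is the tail of a convergent series, so it tends to $0$. The intersection is therefore a single point $\zeta_j$.

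\textbf{Fixed point and location.} Applying $B$ shifts the labels: $B(L_{j^{k+1}\cdot\overline{j}})=L_{j^k\cdot\overline{j}}$. Hence $B(\zeta_j)$ lies in every $L_{j^k\cdot\overline{j}}$, and so $B(\zeta_j)=\zeta_j$. Equivalently, $\zeta_j$ is the unique attracting fixed point of the contraction $\phi_j$ given by Lemma~\ref{thm:Contraction}.

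For the location, every drop $U_{j^m}$ lies in $\mathbb{C}\setminus\overline{\mathbb{D}}$, so $\zeta_j\in\mathbb{C}\setminus\mathbb{D}$. It remains to exclude $\zeta_j\in\mathbb{S}^1$. The roots $z_{j^m}$ lie off the circle because $j^m\notin\mathcal{S}_{\mathcal{A}_d}$ (Remark~\ref{remark:super-attracting-cycle-roots}), but the limit of the drops could still touch $\mathbb{S}^1$. I would argue as follows. A fixed point on $\mathbb{S}^1$ would be a fixed point of the circle homeomorphism $B|_{\mathbb{S}^1}$, whose rotation number is $1/2$. Such a map has only period-$2$ points and no fixed points, which is a contradiction.

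\textbf{Main obstacle.} The step I expect to be most delicate is the nesting and the identification of $L_{j\cdot\overline{j}}$ with the tail of the chain. This requires confirming that the component of $B^{-1}(L_{\overline{j}})$ starting at $U_j$ contains no extra pieces attached through $\overline{U}$. It holds because $\phi_j$ is defined on a neighborhood of $L_{\overline{j}}\setminus\{c\}$ and maps $\overline{U}$ to $\overline{U}_j$. Care is needed at the point $c$, where $\phi_j(c)=z_j$ must be checked to lie on $\partial U_j$, consistent with the root labeling.
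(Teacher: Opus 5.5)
Your proof is correct. For nesting, shrinking and invariance it is the paper's argument:
\begin{itemize}
\item the paper also writes $L_{j^k\cdot\overline{j}}=\overline{\bigcup_{l\ge k}U_{j^l}}$, taking this as the definition where you derive it from the inverse branch $\phi_j$;
\item it bounds $\operatorname{diam}(L_{j^k\cdot\overline{j}})$ by the tail $\sum_{l\ge k}\operatorname{diam}(\overline{U}_{j^l})$ of the geometrically convergent series from Remark~\ref{remark:geometric_convergence}, then applies Cantor's intersection theorem;
\item it gets $B(\zeta_j)=\zeta_j$ from $B(L_{j^{k+1}\cdot\overline{j}})=L_{j^k\cdot\overline{j}}$.
\end{itemize}

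You genuinely differ in the location step. The paper asserts that each $L_{j^k\cdot\overline{j}}$ lies in $\mathbb{C}\setminus\overline{\mathbb{D}}$ ``as they are unions of drops outside the closed unit disk''. But these sets are closures of infinitely many drops, so that reasoning only gives $\zeta_j\in\mathbb{C}\setminus\mathbb{D}$, as you observe. Your extra step rules out $\zeta_j\in\mathbb{S}^1$: such a point would be a fixed point of the orientation-preserving homeomorphism $B|_{\mathbb{S}^1}$, whose rotation number is $1/2$, so all its periodic points have period $2$. Your version is therefore the complete one.

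The cost is that your step relies on $B|_{\mathbb{S}^1}$ being injective with rotation number $1/2$. It transfers verbatim to the diffeomorphism analogue, Lemma~\ref{lemma:intersection_is_point_diffeo}. It would need a substitute if the limbs are reused in the endomorphism case. There a non-injective degree-one circle map with $1/2$ in its rotation interval need not be free of fixed points.

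Finally, drop the aside that $\zeta_j$ is ``equivalently'' the fixed point of $\phi_j$ given by Lemma~\ref{thm:Contraction}. It is not needed. It would also require a domain containing $\zeta_j$ that $\phi_j$ maps into itself, which you have not exhibited.
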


\begin{proof}
By definition, the limb segment $L_{j^k\cdot \overline{j}}$ is the closure of the union of nested drops starting from level $k$: $L_{j^k\cdot \overline{j}} = \overline{\bigcup_{l=k}^{\infty} U_{j^l}}$. Each $L_{j^k\cdot \overline{j}}$ is a closed and connected set. The construction ensures this is a nested sequence: $L_{j^{k+1}\cdot \overline{j}} \subset L_{j^k\cdot \overline{j}}$ for all $k \ge 1$.
The diameter of the limb segment is bounded above by the sum of the diameters of the drops it contains:
\[
    \text{diam}(L_{j^k\cdot \overline{j}}) \leq\sum_{l=k}^{\infty} \text{diam}(\overline{U}_{j^l}).
\]
As established in Remark \ref{remark:geometric_convergence}, the diameters of the drops $\text{diam}(\overline{U}_{j^l})$ converge to zero geometrically. This guarantees that the series $\sum_{l=1}^{\infty} \text{diam}(\overline{U}_{j^l})$ is convergent. A necessary property of any convergent series is that its tail must tend to zero. Therefore,
\[
    \lim_{k \to \infty} \sum_{l=k}^{\infty} \text{diam}(\overline{U}_{j^l}) = 0.
\]
By the Squeeze Theorem, $\lim_{k \to \infty} \text{diam}(L_{j^k\cdot \overline{j}}) = 0$. We now have a nested sequence of non-empty compact sets $\{L_{j^k\cdot \overline{j}}\}_{k=1}^{\infty}$ whose diameters tend to zero. By Cantor's Intersection Theorem, their intersection must be a single point, which we denote $\{\zeta_j\}$.

Moreover, $B$ maps drops to drops via $B(U_{j^{k+1}}) = U_{j^k}$, which implies it maps limb segments to limb segments: $B(L_{j^{k+1}\cdot \overline{j}}) = L_{j^k\cdot \overline{j}}$ for $k \ge 1$. Applying $B$ to the intersection:
\[
    B\left(\bigcap_{k=1}^{\infty}L_{j^k\cdot \overline{j}}\right) = B\left(\bigcap_{k=2}^{\infty}L_{j^k\cdot \overline{j}}\right) \subseteq \bigcap_{k=2}^{\infty} B(L_{j^k\cdot \overline{j}}) = \bigcap_{k=2}^{\infty}L_{j^{k-1}\cdot \overline{j}} = \bigcap_{l=1}^{\infty}L_{j^{l}\cdot \overline{j}}.
\]
This shows $B(\{\zeta_j\}) = \{\zeta_j\}$, so $B(\zeta_j) = \zeta_j$.
Finally, since each set $L_{j^k\cdot \overline{j}}$ is contained in $\mathbb{C} \setminus \overline{\mathbb{D}}$ (as they are unions of drops outside the closed unit disk), their intersection point $\zeta_j$ must also lie in $\mathbb{C} \setminus \overline{\mathbb{D}}$.
\end{proof}

\begin{lemma}
\label{lemma:FixedPointsOnInfinity}
	Let $j\in\{0,d-1\}$. Then, the fixed points $\zeta_0,\zeta_{d-1}$ given in \eqref{eq:FixedPointInBoundary} belong to $\partial\mathcal{A}^*(\infty )$. Moreover, each of these points corresponds to the landing point of a fixed B\"ottcher ray of $\mathcal{A}^*(\infty)$. 
\end{lemma}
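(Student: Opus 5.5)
We need to show that $\zeta_j\in\partial\mathcal{A}^*(\infty)$ and that $\zeta_j$ is the landing point of a fixed Böttcher ray. The plan is to build an invariant curve from $\infty$ to $\zeta_j$ inside $\mathcal{A}^*(\infty)$ from pieces that contract under the inverse branch $\phi_j$ of Lemma~\ref{lemma:diamUto0}, and then identify that curve with a fixed ray.

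First I locate the drops relative to the basin. Each drop is an iterated preimage of $U$, and $B(U)=\mathbb{D}$. Hence $U\cap\mathcal{A}(\infty)=\emptyset$, and so every drop, and every limb $L_{j^k\cdot\overline{j}}$, misses $\mathcal{A}^*(\infty)$ except possibly at boundary points. Since $\partial U\subset B^{-1}(\mathbb{S}^1)$, we get $\partial U\subset B^{-1}(\mathcal{J})\subset\mathcal{J}$, and likewise every drop boundary lies in $\mathcal{J}$. Thus $\zeta_j$, a limit of points of $\partial U_{j^k}$, lies in $\mathcal{J}(B)$.

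To show $\zeta_j\in\partial\mathcal{A}^*(\infty)$, I use the fact that $\partial U$ meets $\partial\mathcal{A}^*(\infty)$. The component of $\widehat{\mathbb{C}}\setminus\overline{\mathbb{D}}$ containing $\infty$ cannot be entirely filled by $U$ and its preimages, because $U$ is bounded and $\mathcal{A}^*(\infty)$ is the component of the complement of the drop closure containing $\infty$ near infinity.

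Concretely, I will choose a point $w_0\in\mathcal{A}^*(\infty)$ close to $\partial U_j$, on the side facing $\infty$. Let $\gamma_0$ be an arc in $\mathcal{A}^*(\infty)\cup\{\text{points of }\partial U\}$ joining $\phi_j(w_0)$ to $w_0$, which can be arranged to stay inside the domain $D$ where $\phi_j$ is defined. Set $\gamma_n=\phi_j^n(\gamma_0)$. Because $\phi_j$ maps the basin into itself near $L_{\overline{j}}$ (it is a branch of $B^{-1}$ and $\mathcal{A}^*(\infty)$ is completely invariant near $\infty$ along this branch), each $\gamma_n$ lies in $\mathcal{A}^*(\infty)$. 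The $\gamma_n$ also concatenate into a curve $\Gamma=\bigcup_n\gamma_n$.

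By Remark~\ref{remark:geometric_convergence}, $\operatorname{diam}\gamma_n\le C\lambda^n$. Since $\gamma_n$ lies within a bounded distance of $\overline{U}_{j^{n+1}}$, the curve $\Gamma$ converges to a single point. That point must be the fixed point of $\phi_j$, namely $\zeta_j$ by Lemma~\ref{lemma:intersection_is_point}. Hence $\zeta_j\in\overline{\mathcal{A}^*(\infty)}\cap\mathcal{J}=\partial\mathcal{A}^*(\infty)$.

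For the ray statement, I transport $\Gamma$ by the Böttcher coordinate $\phi$ of Proposition~\ref{prop:boundary_dynamics_and_fixed_points}, which extends homeomorphically to the closed basin. Then $\phi(\zeta_j)$ is a fixed point of $w\mapsto w^{d+1}$ on $\mathbb{S}^1$, that is, an angle $k/d$. The radial segment at that angle is the fixed Böttcher ray, and by the homeomorphic extension it lands exactly at $\zeta_j$. Without the extension, I would instead use that $\Gamma$ is $B$-invariant up to a fundamental arc. By Lindelöf-type arguments, an invariant curve landing at a repelling fixed point is homotopic in the basin to a fixed ray, and that ray then lands at the same point.

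The main obstacle is the first step: guaranteeing that $\gamma_0$ can be chosen inside $\mathcal{A}^*(\infty)\cap D$ and that $\phi_j$ preserves $\mathcal{A}^*(\infty)$ rather than mapping into another preimage component of the basin. I would first try to settle this using the orientation labeling. The drops $U_0$ and $U_{d-1}$ are the first and last level-one drops along $\partial U$ from the root $c$. Consequently their outer sides face the unbounded complementary region, which contains $\infty$ and so belongs to $\mathcal{A}^*(\infty)$.
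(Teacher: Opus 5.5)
Your treatment of the ray statement matches the paper: the extended B\"ottcher coordinate of Proposition~\ref{prop:boundary_dynamics_and_fixed_points} sends a fixed point of $\partial\mathcal{A}^*(\infty)$ to an angle $k/d$, and the corresponding radial ray lands there. The gap is in the first half. To get $\zeta_j\in\partial\mathcal{A}^*(\infty)$ you need a point $w_0\in\mathcal{A}^*(\infty)$ and an arc $\gamma_0\subset\mathcal{A}^*(\infty)$ from $w_0$ to $\phi_j(w_0)$ inside the domain of $\phi_j$. Nothing in your proposal produces them.

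The justification you sketch is a non sequitur: the outer sides of $U_0,U_{d-1}$ face an unbounded complementary region that ``contains $\infty$ and so belongs to $\mathcal{A}^*(\infty)$''. That region also contains Julia set and other Fatou components. For instance, in the homeomorphism case each root $z_{j^k}$ is a preimage of the Fatou point $c$. So a whole neighbourhood of it, including the gaps between consecutive drops of the limb, lies in a preimage of the basin of the cycle on $\mathbb{S}^1$, not in $\mathcal{A}^*(\infty)$.

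Worse, in the form you set it up ($w_0$ close to $\partial U_j$, $\gamma_0$ in a small neighbourhood of $\overline{U}_j$), you need $\partial\mathcal{A}^*(\infty)\cap\partial U_j\neq\emptyset$. Since $B^2(\partial U_j)=\mathbb{S}^1$ and $B(\partial\mathcal{A}^*(\infty))\subset\partial\mathcal{A}^*(\infty)$, this yields a point of $\partial\mathcal{A}^*(\infty)\cap\mathbb{S}^1$, which by the symmetry $\mathcal{I}$ also lies on $\partial\mathcal{A}^*(0)$. That is essentially the bi-accessibility of Theorem~\ref{thm:biaccessibility}, which the paper derives \emph{from} this lemma. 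So the obstacle you flag is the whole difficulty, and the route is circular as it stands.

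The other half of your obstacle is easy once argued correctly. $B^{-1}(\mathcal{A}^*(\infty))$ consists of $\mathcal{A}^*(\infty)$ and a single further component, the one containing the pole $1/\bar{a}$, which lies in $\mathbb{D}$. So a branch of $B^{-1}$ defined on a connected set in $\mathbb{C}\setminus\overline{\mathbb{D}}$ maps points of $\mathcal{A}^*(\infty)$ into $\mathcal{A}^*(\infty)$. Complete invariance ``near $\infty$'' is not the reason.

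The paper constructs no arc at all. It uses the fixed-point count of Proposition~\ref{prop:boundary_dynamics_and_fixed_points}, which you already invoke. The $2d+2$ fixed points are $0$, $\infty$, $d$ points on $\partial\mathcal{A}^*(\infty)$, and $d$ points on $\partial\mathcal{A}^*(0)$. The latter lie in $\overline{\mathbb{D}}$, since $\mathcal{A}^*(0)$ is connected, contains $0$ and misses the invariant circle. By Lemma~\ref{lemma:intersection_is_point}, $\zeta_j$ is fixed and lies in $\mathbb{C}\setminus\overline{\mathbb{D}}$, so it must be one of the fixed points on $\partial\mathcal{A}^*(\infty)$. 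Your second half then finishes the proof.

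This also repairs a smaller error. Your inclusion $\partial U\subset B^{-1}(\mathbb{S}^1)\subset B^{-1}(\mathcal{J})$ presumes $\mathbb{S}^1\subset\mathcal{J}$, which fails for adjacent parameters: the attracting cycle and part of its basin lie on $\mathbb{S}^1$, and in the homeomorphism case even the root $c\in\partial U$ is a Fatou point. The count gives $\zeta_j\in\mathcal{J}$ directly.
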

\begin{proof}
     
    The fixed points $\zeta_0,\zeta_{d-1}$ given in \eqref{eq:FixedPointInBoundary} necessarily coincide with two of the $2d+2$ fixed points in Proposition \ref{prop:boundary_dynamics_and_fixed_points}. Given that $\zeta_j\in \mathbb{C}\setminus\overline{\mathbb{D}}$, it follows that $\zeta_j$ must belong to $\partial\mathcal{A}^*(\infty)$. In other words, 
    \[
        \left\{\zeta_0,\zeta_{d-1}\right\}=\left\{\phi^{-1}\left(0\right), \phi^{-1}\left((d-1)/d\right)\right\}.
    \]
    
    Moreover, these points correspond respectively to the landing points of the fixed B\"ottcher rays $R^\infty _{\theta_j}(t)$ on $\mathcal{A}^*\left(\infty\right)$, with angles $\theta _j= j/d$, where $j=0,d-1$.
\end{proof}

\noindent\textbf{Puzzle Pieces Construction. }
We can construct a curve $\Gamma$ that connects the fixed points $\zeta_0$ and $\zeta_{d-1}$ through $L_{\overline{d-1}}\cup L_{\overline{0}}$. Denote as $a_{0^k}$ the $k$-th preimage of $z=a$ that lies on $U_{0^k}$, and similarly, denote as $a_{(d-1)^k}$ the $k$-th preimage of $z=a$ lying on $U_{(d-1)^k}$. Then, the construction of $\Gamma $ is as follows:
\begin{enumerate}
    \item \label{item:gamma_step1} Take the preimage of the segment connecting $0$ to $a$ that passes through $c$ and lies on $L_{\overline{d-1}}\cup L_{\overline{0}}$. This is a segment connecting $a_0$ and $a_1$ through $z_0$, $a$ and $z_{d-1}$.
    \item Next, take the preimage of the segment connecting $a$ to $a_{d-1}$ through $z_{d-1}$ on $L_{\overline{d-1}}\cup L_{\overline{0}}$. This is a segment connecting $a_{d-1}$ with $a_{(d-1)(d-1)}$ through $z_{(d-1)(d-1)}$. Similarly, the preimage of the segment connecting $a$ to $a_0$ through $z_0$ on $L_{\overline{d-1}}\cup L_{\overline{0}}$ is a segment that connects $a_0$ with $a_{00}$ through $z_{00}$.
    \item By joining the segments obtained in steps 1 and 2, we obtain a curve connecting $a_0$ to $a_{d-1}$ on $L_{\overline{d-1}}\cup L_{\overline{0}}$.
    \item For $k\in\mathbb{N}$,  take the preimage of the segment connecting $a_{(d-1)^k}$ to $a_{(d-1)^{k+1}}$ through $z_{(d-1)^{k+1}}$ on $L_{\overline{d-1}}\cup L_{\overline{0}}$. This is a segment connecting $a_{(d-1)^{k+1}}$ with $a_{(d-1)^{k+2}}$ through $z_{(d-1)^{k+2}}$, and due to Lemma \ref{lemma:diamUto0}, it
    converges to $\zeta_{d-1}$ as $k\to\infty $. Similarly, the preimage of the segment connecting $a_{0^k}$ to $a_{0^{k+1}}$ through $z_{0^{k+1}}$ on $L_{\overline{d-1}}\cup L_{\overline{0}}$ is a segment connecting $a_{0^{k+1}}$ with $a_{0^{k+2}}$ through $z_{0^{k+2}}$, and converges to $\zeta_0$ as $k\to\infty$.
    \item Finally, there is a curve, $\Gamma$, connecting $\zeta_{d-1}$ and $\zeta_0$ on $L_{\overline{d-1}}\cup L_{\overline{0}}$ formed by the union of all segments obtained in step 4.
\end{enumerate}

Furthermore, we define the extended curve $\tilde{\Gamma} = \Gamma \cup R^\infty_0 \cup R^\infty_{(d-1)/d}$. This $\tilde{\Gamma}$ is a simple closed curve on the Riemann sphere $\widehat{\mathbb{C}}$ (passing through $\infty$). By the Jordan Curve Theorem, $\tilde{\Gamma}$ partitions the sphere into two disjoint open half-planes. We denote the half-plane that contains the unit circle $\mathbb{S}^1$ as $H_{\mathbb{S}^1}$.

From now on, our attention will be directed towards $H_{\mathbb{S}^1}$, because any bi-accessibility must occur on the unit circle due to the symmetry of $B$ under $\mathcal{I}$.

We fix the normalization of the Böttcher coordinates from $\mathcal{A}^*(\infty)$ such that $R^\infty_0$ lands at $\zeta_0$ and $R^\infty_{(d-1)/d}$ lands at $\zeta_{d-1}$. We define $H_{\mathbb{S}^1}$ as the half-plane intersected by all Böttcher rays $R^\infty_\theta$ with angles $\theta \in ((d-1)/d, 1)$. See Figure \ref{fig:W}. 

 By Theorem \ref{thm:Goldberg}, the map $m_{d+1}$ admits a 2-cycle $\{t_1,t_2\}$, where
 \[
    t_1=\frac{d^2+d-1}{d(d+1)}\qquad t_2=\frac{d^2+2d-1}{d(d+1)}, 
 \] 
 contained in the interval of fixed points $I=\left[\frac{d-1}{d},1\right]$. Consequently, the map $B$ has a corresponding 2-cycle $\{\zeta_{t_1},\zeta_{t_2}\}$, which lies on the arc of $\partial\mathcal{A}^*(\infty)$ between the fixed points $\zeta_{0}$ and $\zeta_{d-1}$.

Under the action of $\mathcal{I}(z)$, we have a symmetric ray $R^0_{s_1}$ landing on $\partial\mathcal{A}^*(0)$ at the point $\zeta_{s_1} = \mathcal{I}(\zeta_{t_1})$, where
\[
    s_1=\frac{d+1}{d(d+2)},
\]
and $R^0_{s_2}$ landing on $\mathcal{I}(\zeta_{t_2})$, where
\[
    s_2=\frac{1}{d(d+2)}. 
\]

\begin{lemma}
\label{lemma:final-puzzlepiecesparticular}
    Fix $d\geq 1$. Then, the period $2$ point $\zeta_{t_1}\in\partial \mathcal{A}^*(\infty)$ with
    \[
        t_1=\frac{d^2+d-1}{d(d+1)}
    \]
    that is the landing point of the B\"{o}ttcher ray $\widehat{R}^\infty_{t_1}$, is bi-accessible.
\end{lemma}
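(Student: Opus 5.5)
The plan is to show that the $2$-cycle $\{\zeta_{t_1},\zeta_{t_2}\}$ on $\partial\mathcal{A}^*(\infty)$ actually lies on $\mathbb{S}^1$. Once this holds, symmetry under $\mathcal{I}$ finishes the argument. Indeed, if $\zeta_{t_1}\in\mathbb{S}^1$, then $\mathcal{I}(\zeta_{t_1})=\zeta_{t_1}$, so the symmetric ray $R^0_{s_1}=\mathcal{I}(R^\infty_{t_1})$ lands at $\zeta_{s_1}=\zeta_{t_1}$. Then $\zeta_{t_1}$ is the common landing point of a Böttcher ray of $\mathcal{A}^*(\infty)$ and one of $\mathcal{A}^*(0)$, which is exactly Definition~\ref{def:bi-accessible}. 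The whole task therefore reduces to locating the cycle, and for that I will use the puzzle structure inside $H_{\mathbb{S}^1}$.

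The first step is combinatorial: read off where the cycle sits relative to the drops. The angles $t_1,t_2$ lie in the arc $I=((d-1)/d,1)$, which contains no fixed angle of $m_{d+1}$; by Theorem~\ref{thm:Goldberg} the cycle has deployment concentrated in this single gap. On the dynamical side, the rays with angles in $I$ enter $H_{\mathbb{S}^1}$, which is bounded by $\tilde\Gamma$. The region $H_{\mathbb{S}^1}\setminus\overline{\mathbb{D}}$ is cut by the drops attached along the cycle $\{c,z_d\}$, namely the drops labelled by words in $\mathcal{S}_{\mathcal{A}_d}$ (Remark~\ref{remark:super-attracting-cycle-roots}). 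Pulling $\tilde\Gamma$ back under $B$ gives a nested sequence of puzzle pieces $P_k$, each bounded by preimages of $\Gamma$, the rays $R^\infty_{\theta}$ with $\theta\in m_{d+1}^{-k}\{0,(d-1)/d\}$, and arcs of $\mathbb{S}^1$. I will follow the piece containing $R^\infty_{t_1}$, using the itinerary of $t_1$ under $m_{d+1}$ with respect to the partition by these preimage angles. Since $t_1$ and $t_2$ alternate between the two subarcs of $I$ separated by the preimage angle $1-1/(d(d+1))$ (or the appropriate one), the pieces alternate. Their closures must contain $c$ and $z_d$ alternately, the roots of the drops $U_{\underline{0}d\underline{0}d\cdots}$ and $U_{\underline{d}d\cdots}$ lying on opposite sides of the line through $0,c,a$, by Lemma~\ref{lemma:lobes}.

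The second step is to show that the nested intersection $\bigcap_k \overline{P_{2k}}$ is a single point lying on $\mathbb{S}^1$. Because $B^2$ maps $P_{2k+2}$ onto $P_{2k}$, the inverse branch of $B^2$ fixing this nest is defined on a simply connected neighbourhood avoiding the postcritical set, except at the super-attracting point $c$. I will argue as in Lemma~\ref{lemma:diamUto0} and Lemma~\ref{lemma:intersection_is_point}, using Lemma~\ref{lemma:diam0} and Lemma~\ref{thm:Contraction}, to get shrinking diameters. The limit point is then fixed by $B^2$ and lies on $\partial\mathcal{A}^*(\infty)$, since it is the landing point of $R^\infty_{t_1}$. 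It is also symmetric, because the nest can be chosen $\mathcal{I}$-invariant, with the pieces straddling the circle between consecutive preimages of $c$. Hence the point lies on $\mathbb{S}^1$ and equals $\zeta_{t_1}$.

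The main obstacle is the shrinking step. The critical point $c$ is itself periodic on $\mathbb{S}^1$, so the pieces adjacent to $c$ contain postcritical points and Lemma~\ref{lemma:diam0} cannot be applied directly. My first attempt will be to show that the limit point is a repelling period-$2$ point on $\mathbb{S}^1$ distinct from $c$ and $z_d$, because its ray angle is not that of the drop roots. I will then shrink the pieces slightly so that they stay away from $\{c,z_d\}$, and apply Lemma~\ref{lemma:diam0} to the inverse branch of $B^2$ on that smaller domain. If this fails, I will instead use that $B|_{\mathbb{S}^1}$ is a homeomorphism with rotation number $1/2$. Its only non-repelling $2$-cycle is $\{c,z_d\}$, and Lemma~\ref{lemma:non-repelling-cycle-existence} constrains the remaining cycles, which identifies the landing point of $R^\infty_{t_1}$ with the repelling $2$-cycle on $\mathbb{S}^1$.
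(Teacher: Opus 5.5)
Your overall route is the paper's: a piece straddling $\mathbb{S}^1$ inside $W=W^\infty\cup\mathcal{I}(W^\infty)$, its second pullback, contraction of the inverse branch $\varphi$ of $B^2$, and symmetry. Your endgame is an equivalent and slightly cleaner form of the paper's appeal to uniqueness of the fixed point $z^*$ of $\varphi$: a single $\mathcal{I}$-invariant limit point lies on $\mathbb{S}^1$, so $\mathcal{I}(R^\infty_{t_1})$ lands there too.

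The gap is at the step you flag yourself. Lemma~\ref{lemma:diam0} and Lemma~\ref{thm:Contraction} need the self-map property $\varphi(P_0)\subset P_0$, and in fact $\overline{P_2}\subset P_0$, so that $\overline{P_{2k+2}}=\varphi^{k}(\overline{P_2})$ is the image of one fixed compact set. Your plan never establishes this. Your first attempt is circular, because it starts from the limit point lying on $\mathbb{S}^1$, which is the conclusion. A priori you only know $\zeta_{t_1}\in\mathcal{J}$, so it avoids every Fatou neighbourhood of $c$ and $z_d$.

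The missing ingredient is that the excised set must be \emph{forward invariant}. The paper removes disks with $B^2(\overline{D_c})\subset D_c$ and $B^2(\overline{D_{z_d}})\subset D_{z_d}$; small round disks suffice at a super-attracting point. Then a point of $\overline{P_2}\cap\overline{D_c}$ would be sent by $B^2$ into $\overline{P_0}\cap D_c=\emptyset$. Away from the cycle, the inclusion comes from the nesting of the angle intervals and the equipotentials.

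Your fallback does not help. Lemma~\ref{lemma:non-repelling-cycle-existence} is stated for diffeomorphisms, whereas at $r=2d+1$ the circle map has a critical point. More importantly, it only concerns existence of cycles on $\mathbb{S}^1$. It cannot tell you that $R^\infty_{t_1}$ lands on one of them, which is the whole content of the lemma.

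There are three further corrections.

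First, in your first step the pieces are bounded by $\infty$-rays and arcs of $\mathbb{S}^1$. Such pieces lie in $\widehat{\mathbb{C}}\setminus\mathbb{D}$ and cannot form an $\mathcal{I}$-invariant nest, so your symmetry step would fail. You need the paper's pieces, bounded by extended rays $\widehat{R}^\infty_\theta$ (ray plus limb), their mirrors $\widehat{R}^0_{1-\theta}$, and equipotentials; these meet $\mathbb{S}^1$ only at $c$ and $z_d$. For your formulation, the trapping sets should be chosen $\mathcal{I}$-symmetric, e.g.\ sublevel sets of a B\"ottcher coordinate of $B^2$ at $c$. Alternatively, as in the paper, it suffices that the tails of both $R^\infty_{t_1}$ and $R^0_{1-t_1}$ lie in every $P_{2k}$.

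Second, your plan only covers the super-attracting homeomorphism configuration. The paper also uses this lemma in the attracting, endomorphism and diffeomorphism configurations, excising forward-invariant K\oe nigs regions $D_2$ instead, and nothing in the inner case.

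Third, the displayed $t_1=1-\frac{1}{d(d+1)}$ is exactly your separating angle and satisfies $m_{d+1}(t_1)=\frac{d-1}{d}$, so it is not periodic. The period-two angles in $(\frac{d-1}{d},1)$ are $\frac{d^2+d-1}{d(d+2)}$ and $\frac{d^2+2d-1}{d(d+2)}$. For $d=2$ these are $5/8$ and $7/8$, consistent with the paper's $s_1,s_2$. These are the angles your argument implicitly, and correctly, uses.
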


We will now construct puzzle pieces to show that $\zeta_{t_1} = \mathcal{I}(\zeta_{s_1})$, which proves bi-accessibility.

    We will refer to the sets $\widehat{R}^\infty _{0} = R^\infty_0 \cup L_{\overline{0}}$ and $\widehat{R}^\infty _{(d-1)/d} = R^\infty_{(d-1)/d} \cup L_{\overline{d-1}}$ as \emph{extended B\"ottcher rays}, or simply as \emph{extended rays}. Due to Lemma \ref{lemma:FixedPointsOnInfinity}, we know that these are connected compact sets. 
    
    Let $E$ denote the equipotential in $\mathcal{A}^*(\infty)$ given by $\phi^{-1}\left(\left\{z\colon |z|=r\right\}\right)$, where $r<1$ is fixed. Consider the set 
    \[
        \mathbb{C}\setminus \left(\widehat{R}^\infty_{0}\cup \widehat{R}^\infty_{(d-1)/d}\cup E\cup \mathbb{D}\right),
    \]
which consists of two bounded connected components. Let $W^\infty$ denote the bounded connected component for which the intersection with the set of rays 
\[
\left\{R^\infty_\theta \colon \theta \in ((d-1)/d,1)\right\}
\] 
is nonempty. Let $W^0$ be the symmetric image of $W^\infty$ under $\mathcal{I}$, and denote as $W$ the union $W^\infty\cup W^0$, as illustrated in Figure \ref{fig:W}.

\begin{figure}
\begin{center}
	\includegraphics[width=1\linewidth]{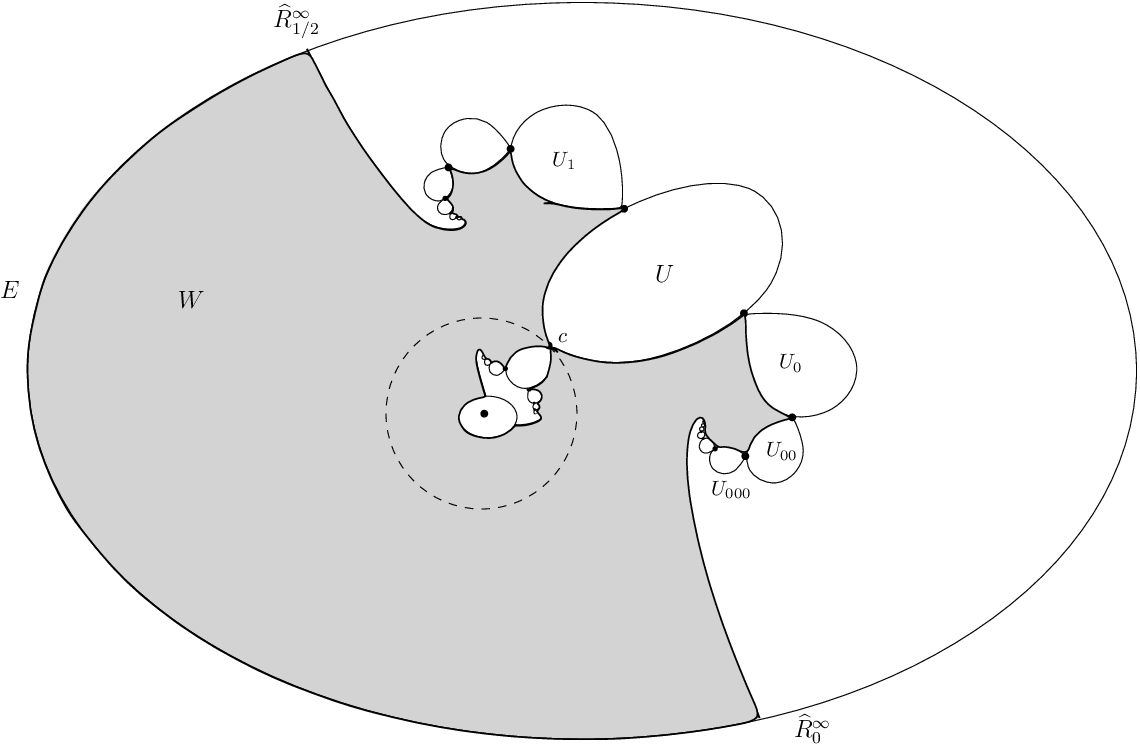}
	\caption{A sketch of the region $W$ (in gray) for the case $d=2$. $W$ is the union of $W^\infty$ and its symmetric image $W^0$. The region $W^\infty$ is bounded by the extended B\"ottcher rays $\widehat{R}^\infty_{0}$ and $\widehat{R}^\infty_{1/2}$, and the equipotential $E$. The unit circle is shown as a dashed line.}
	\label{fig:W}
\end{center}
\end{figure}

Since $\widehat{R}^\infty_{0}$ is connected, its preimages under $B$ in $\mathbb{C}\setminus\mathbb{D}$ consist of $d+1$ disjoint connected components formed by the preimages of $L_{\overline{0}}$ and $R^\infty_0$ in $\mathbb{C}\setminus\mathbb{D}$: 
\[
    \widehat{R}^\infty_{j/(d+1)}\colon= R^\infty_{j/(d+1)}\cup L_{j\cdot\overline{0}}, \qquad j\in\{0,\dots, d\}.
\]
Similarly, the preimages of $\widehat{R}^\infty_{(d-1)/d}$ are given by:
\[
    \widehat{R}^\infty_{\frac{d(j+1)-1}{d(d+1)}}\colon= R^\infty_{\frac{d(j+1)-1}{d(d+1)}}\cup L_{j\cdot \overline{d-1}},\qquad j\in\{0\dots, d\}.
\]
In general, we will denote the preimages of $\widehat{R}^\infty_\theta$ as $\widehat{R}^\infty_\varphi$, where $m_2(\varphi)=\theta$.

We will focus on the preimages contained in the interval $I$, which are 
\[
\widehat{R}^\infty_{0},\quad  \widehat{R}^\infty_{\frac{d-1}{d}},\quad \widehat{R}^\infty_{\frac{d}{d+1}},\mbox{ and } \widehat{R}^\infty_{\frac{d^2+d-1}{d(d+1)}}.
\]

Denote by $P$ the connected component of \label{def:base_puzzle_P}
\[
	W\setminus \left(\widehat{R}^\infty_{d/(d+1)}\cup \widehat{R}^0_{1/(d+1)}\right)
\]
that intersects the B\"ottcher rays of $\mathcal{A}^*(\infty)$ with angles in $\left(\frac{d-1}{d},\frac{d}{d+1}\right)$. 

Since $\{c,z_2\}$ is a super-attracting cycle, for a sufficiently small $\epsilon >0$, there is a disk $D_c=D(c,\epsilon)$ that is contained in $\mathcal{A}^*(c)$, and $\overline{D}_c\subset B^{-2}\left(D_c\right)\cap\mathcal{A}^*(c)$. Denote as $D_{z_2}$ the preimage of $D_c$ under $B$ that is contained in $\mathcal{A}^*(z_2)$. Then, it is also true that $D_{z_2}\subset B^{-2}\left(D_{z_2}\right)\cap\mathcal{A}^*(z_2)$. We can now define the \emph{puzzle piece} $P_0$ as the open set
\[
	P_0:=P\setminus \left(\overline{D}_c\cup \overline{D}_{z_2}\right).
\]

Let $P_1$ be the connected component of the preimage of $P_0$ under $B$ that is contained in $W$. It is important to note that, by construction, $P_1$ intersects the B\"ottcher rays of $\mathcal{A}^*(\infty)$ with angles in $\left(\frac{d^2+d-1}{d(d+1)},\frac{d^2+2d}{(d+1)^2}\right)$. Additionally, by removing the disks $D_c$ and $D_{z_2}$, we ensure that $\overline{P}_0\cap \overline{P}_1=\emptyset$.

We define $P_1 = B^{-1}(P_0) \cap W$ and $P_2 = B^{-1}(P_1) \cap W$.
Due to the expansion of $B^{-1}$ near the cycle, we have the strict inclusion $\overline{P}_2 \subset P_0$ (Figure \ref{fig:3Puzzles}).

\begin{figure}
\begin{center}
	\includegraphics[width=1\linewidth]{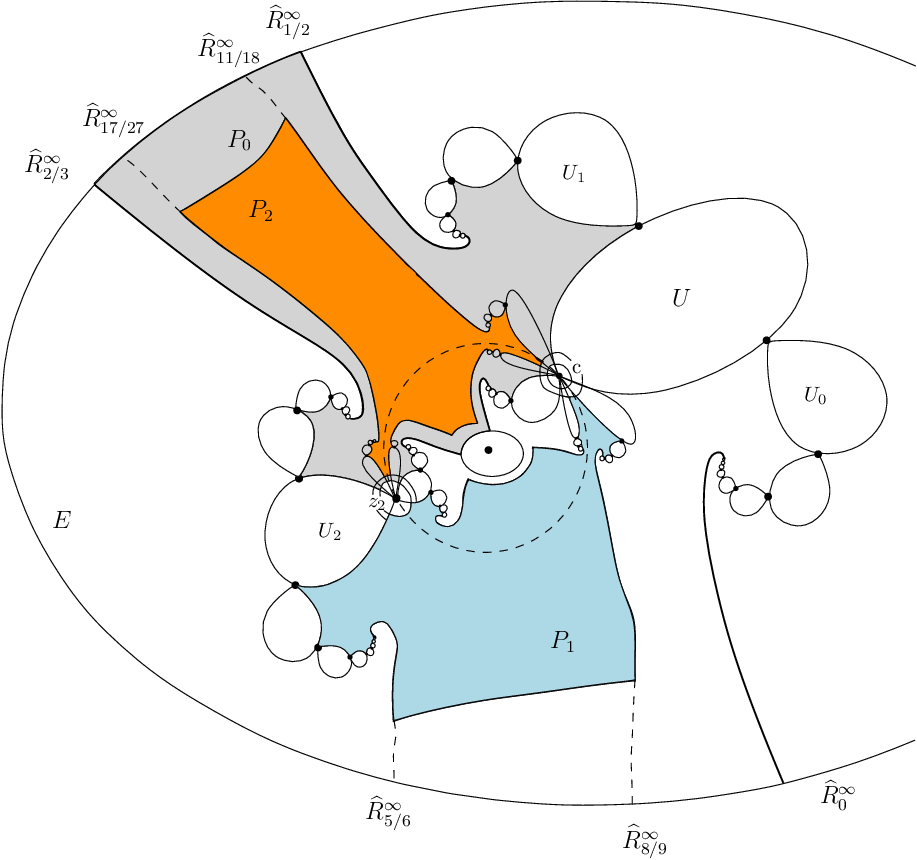}
	\caption{Super-attracting homeomorphism scenario: A sketch of the regions $P_0$ (in gray), $P_1$ (in blue), and $P_2$ (in orange). The construction, which removes disks around the 2-cycle, ensures that $\overline{P_0} \cap \overline{P_1} = \emptyset$. The preimage $P_2 = B^{-1}(P_1)$ is strictly contained within $P_0$ ($\overline{P_2} \subset P_0$), guaranteeing a contraction}
	\label{fig:3Puzzles}
\end{center}
\end{figure}
Take $P_2$ as the connected component of $B^{-1}(P_1)$ that is contained in $W$. \label{def:P2_first_construction}. Given that $P_2$ intersects the B\"ottcher rays of $\mathcal{A}^*(\infty)$ with angles in $\left(\frac{d^3+d^2-1}{d(d+1)^2},\frac{d^3+2d^2+d-1}{(d+1)^3}\right)$, and considering that $B^{-1}$ is expanding near the super-attracting cycle $\left\{c,z_2\right\}$, it follows that $\overline{P}_2\subset P_0$, as illustrated in Figure \ref{fig:3Puzzles}.

\paragraph{Attracting case}
We now examine the specific parameters $r=2d+1$, and $\alpha $ such that the critical point $c$ belongs to the basin of attraction of a $2$-cycle on $\mathbb{S}^1$ with rotation number $1/2$. We denote this cycle by $\{\xi_0,\xi_1\}$, with $\xi_0$ being the marked point.

Much of what we have constructed in the super-attracting case is applicable to this scenario as well. Consequently, we will only highlight the differences.

\vspace{0.5cm}
\noindent\textbf{Labeling drops and roots.} 
In this scenario, $z_2$ is not a critical value, requiring an adjustment in the labeling of the drops of level 2. We analyze the components of $B^{-1}(U_d)$, denoted by $U_{id}$, with $i\in \{0,\dots ,d\}$.

Since $B|_{\mathbb{S}^1}$ is injective and orientation-preserving, there is exactly one preimage of $U_d$ attached to the unit circle $\mathbb{S}^1$, while the remaining $d$ preimages are attached to the boundary of the drop $\partial U$. The labeling depends on the argument of the critical point $c$ relative to the reference angle $\xi_0$:

\begin{enumerate}
    \item \textbf{Case $\arg(c) < \arg(\xi_0)$:}
    The preimage attached to $\mathbb{S}^1$ lies to the right of the line segment from $0$ to $a$ and is labeled $U_{\underline{0}d}$. The remaining components attached to $\partial U$ are labeled $U_{id}$ for $i=\underline{1}, \dots, \underline{d}$ in increasing order relative to the orientation of $\partial U$.

    \item \textbf{Case $\arg(c) > \arg(\xi_0)$:}
    The preimage attached to $\mathbb{S}^1$ is labeled $U_{\underline{d}d}$. The remaining components attached to $\partial U$ are labeled $U_{id}$ for $i=\underline{0}, \dots, \underline{d-1}$ in increasing order.
\end{enumerate}

\begin{figure}
\centering
	\includegraphics[width=1\linewidth]{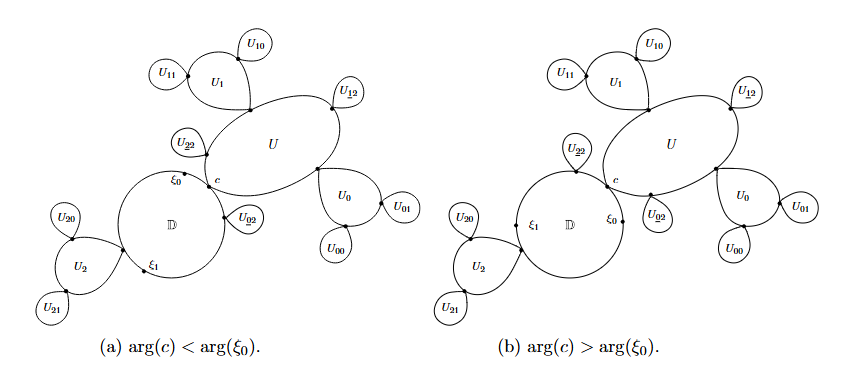}
\caption{Attracting homeomorphism scenario: The two possible configurations for the preimages of $U_2$, depending on the argument of the critical point $c$ relative to the attracting fixed point $\xi_0$. (a) If $\arg(c)<\arg(\xi_0)$, the preimage attached to $\mathbb{S}^1$ is $U_{\underline{0}2}$. (b) If $\arg(c)>\arg(\xi_0)$, the preimage attached to $\mathbb{S}^1$ is $U_{\underline{2}2}$.}
 \label{fig:Homeo-At-Step02}
\end{figure}

\vspace{0.5cm}
\noindent\textbf{Drops of level $k\geq 3$.}
Let $\mathcal{S}^0_{\mathcal{A}_d}$ represent the subset of $\mathcal{S}_{\mathcal{A}_d}$ (see \eqref{eq:S_Ad_definition}) defined as
\begin{equation}
\label{eq:setS0}
    \mathcal{S}^0_{\mathcal{A}_d}=\left\{i_1\dots i_k\in \Xi_\mathcal{A}\colon i_j\in \{\underline{0}, d\},\ j=1,\dots ,k\right\}
\end{equation}

Let $k\geq 3$. Given the definition of drops up to level $k-1$, the following rules apply:
\begin{enumerate}
    \item If $i_1\dots i_k\in\mathcal{S}^0_{\mathcal{A}_d}$, then $U_{i_1\dots i_k}$ is the preimage of $U_{i_2\dots i_k}$ attached to $\mathbb{S}^1$.
    \item If $i_1\dots i_k\not\in \mathcal{S}^0_{\mathcal{A}_d}$ and
    \begin{enumerate}
        \item $i_2\dots i_{k}\in\mathcal{S}^0_{\mathcal{A}_d}$, then $U_{i_1\dots i_k}$ is the preimage of $U_{i_2\dots i_k}$ attached to $U_{i_1\dots i_{k-3}}$ in the arc $(z_{i_1\dots i_{k-3}j},z_{i_1\dots i_{k-3}i_1})$, where $j=i_1-1\pmod{d}$. 
        \item $i_2\dots i_{k}\not\in\mathcal{S}^0_{\mathcal{A}_d}$, then $U_{i_1\dots i_k}$ is the preimage of $U_{i_2\dots i_k}$ attached to $U_{i_1\dots i_{k-1}}$.
    \end{enumerate}
   \end{enumerate}
As in the previous case, the action of $B$ on a drop of level $k$ results in a right-hand shift in the labels.

\vspace{0.5cm}
\noindent\textbf{Puzzle Pieces Construction. } 
We maintain the notation of the B\"ottcher rays and the region $W$ as previously established.

Since $\{\xi_0,\xi_1\}$ is an attracting cycle, there is $\epsilon>0$ such that $c\in \partial D_{0} =\partial \phi^{-1}\left(D(0,\epsilon )\right)$, where $\phi$ are the K\oe nigs coordinates, with $\overline{D}_{0}\subset \mathcal{A}^*(\xi_0)$. Note that $D_{0}$ does not intersect $\overline{U}$. Let $D_{2}$ be the second preimage of $D_{0}$ under $B$ such that $\overline{D}_{0}\subset \overline{D}_2\subset \mathcal{A}^*(\xi_0)$. It is worth observing that $z_{\underline{0}d}$ and $z_{\underline{d}d}$ belong to $\partial D_{2}$, but $D_2$ does not intersect $\overline{U}_{\underline{0}d}$ nor $\overline{U}_{\underline{d}d}$. 

We define the puzzle piece $P_0$ as the open set
\[
    P_0 := P\setminus \overline{D}_2,
\]
where $P$ is the base puzzle piece defined in Section \ref{def:base_puzzle_P} on page \pageref{def:base_puzzle_P}.
We define $P_1$ as the connected component of the preimage of $P_0$ under $B$ contained in $W$, as illustrated in Figure \ref{fig:3PuzzlesHomeo}. By removing the topological disk $D_2$, we ensure that $\overline{P}_0\cap \overline{P}_1=\emptyset$. $P_2$ is then taken as before.

\begin{figure}
\begin{center}
	\includegraphics[width=1\linewidth]{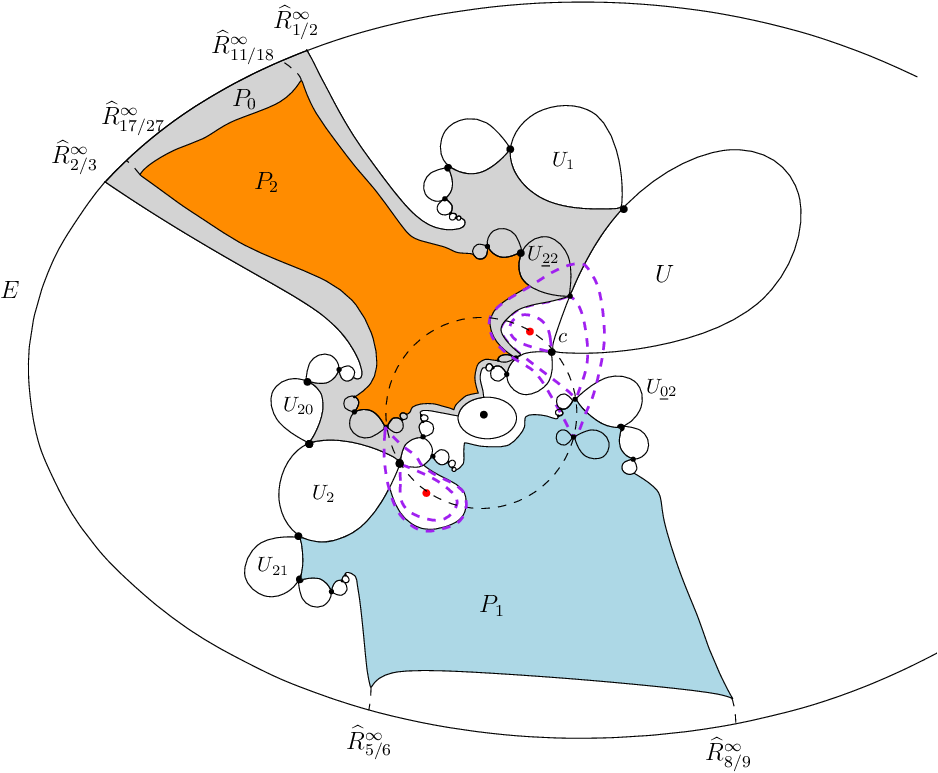}
	\caption{Attracting homeomorphism scenario: A sketch of the regions $P_0$ (in gray), $P_1$ (in blue), and $P_2$ (in orange). The figure also depicts the attracting cycle $\{\xi_0,\xi_1\}$ (in red) and the boundaries of the removed disks (in purple).}
	\label{fig:3PuzzlesHomeo}
\end{center}
\end{figure}

\subsection{The Endomorphism Case}
\label{subsec:EndoCase}

In this region, $B|_{\mathbb{S}^1}$ acts as a circle endomorphism with two simple critical points, denoted $c_+$ and $c_-$. We define their corresponding \emph{co-critical points} $c_+', c_-' \in \mathbb{S}^1$ by the condition $B(c_\pm') = B(c_\pm)$.

These points partition the unit circle into two intervals with distinct covering properties:
\begin{itemize}
    \item The interval $[c_+', c_-']$, which contains the critical points, maps 3-to-1 onto the critical value interval $[v_+, v_-]$.
    \item The complementary interval $[c_-', c_+']$, which is disjoint from the critical set, maps 1-to-1 onto $[v_-, v_+]$.
\end{itemize}
Additionally, the map $B|_{\mathbb{D}}$ reverses orientation on the arc $(c_-, c_+)$.

This mapping structure determines the location of the preimages of the critical points. Since the interval $[c_-', c_+']$ maps injectively to the critical values, it contains exactly one preimage for each critical point. The remaining preimages must therefore lie on the boundary of the drop $U$. We label these sets of preimages as follows:

\begin{itemize}
    \item \textit{On the Circle}: Let $z_d^\pm \in \mathbb{S}^1$ denote the unique preimages of $c_\pm$ lying on the unit circle.
    \item \textit{On the Drop Boundary}: Let $\{z_0^\pm, \dots, z_{d-1}^\pm\} \subset \partial U$ denote the remaining $d$ preimages of $c_\pm$.
\end{itemize}

\begin{figure}
\begin{center}
	\includegraphics[scale=1]{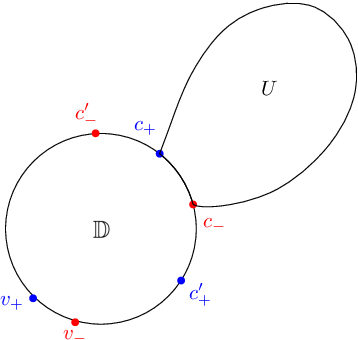}
	\caption{The dynamical configuration for the endomorphism case ($1<r<2d+1$). The map has two critical points ($c_+, c_-$) and two co-critical points ($c_+', c_-'$) in $\mathbb{S}^1$. The drop $U$ (preimage of $\mathbb{D}$) attaches to $\mathbb{S}^1$ on the arc $(c_-, c_+)$.}
	\label{fig:EndoGral}
\end{center}
\end{figure}

We focus on adjacent parameters where the Fatou domains contain both free critical points. We distinguish between two dynamical scenarios based on the type of cycle in $\mathbb{S}^1$: a super-attracting 2-cycle and an attracting 2-cycle.

We label the first preimages of the drop $U$. Specifically, we define $U_i$ as the unique connected component of $B^{-1}(U)$ such that its boundary contains the preimages $z_i^+$ and $z_i^-$.

Although the general labeling scheme remains consistent, the positioning of the preimages of the specific drop $U_d$ differs between the super-attracting and attracting cases. These differences need distinct constructions for the puzzle pieces in each scenario, which we detail below.

\paragraph{Super-attracting case}
Suppose $(r,\alpha)\in T_{1/2}$, with $1<r<2d+1$, and there exists a super-attracting $2$-cycle in $\mathbb{S}^1$. The critical points $c_+$ and $c_-$ are distinct. A cycle is super-attracting if and only if it contains a critical point. Therefore, the $2$-cycle must coincide with the forward orbit of exactly one of these critical points (e.g., $\{c_+, B(c_+)\}$). Consequently, only the critical value corresponding to that specific critical point lies on the cycle; the orbit of the other critical point remains independent.

\vspace{0.5cm}
\noindent\textbf{Labeling drops of level $k\geq 2$.}
Since the arc $[c_-', c_+']$ maps injectively to $[v_-, v_+]$, there exists a unique preimage $V$ of the drop $U_d$ attached to the unit circle $\mathbb{S}^1$. The labeling of $V$ depends on the specific super-attracting cycle configuration and its position relative to the line $L$ connecting $0$ and $a$ (see Figure \ref{fig:EndoSuper-At-Step02}).

\begin{enumerate}
    \item \textbf{Cycle $\{c_+, z_d^+\}$:} In this case, $z_d^+ = v_+$. The drop $V$ attaches to the arc $[v_-, c_+']$ (located to the right of $L$).
    \begin{itemize}
        \item We label $V = U_{\underline{0}d}$.
        \item The remaining preimages on $\partial U$ are labeled $U_{id}$ for $i = \underline{1}, \dots, \underline{d}$.
    \end{itemize}
    
    \item \textbf{Cycle $\{c_-, z_d^-\}$:} In this case, $z_d^- = v_-$. The drop $V$ attaches to the arc $[c_-', v_+]$ (located to the left of $L$).
    \begin{itemize}
        \item We label $V = U_{\underline{d}d}$.
        \item The remaining preimages on $\partial U$ are labeled $U_{id}$ for $i = \underline{0}, \dots, \underline{d-1}$.
    \end{itemize}
\end{enumerate}

\begin{figure}
\includegraphics[width=1\linewidth]{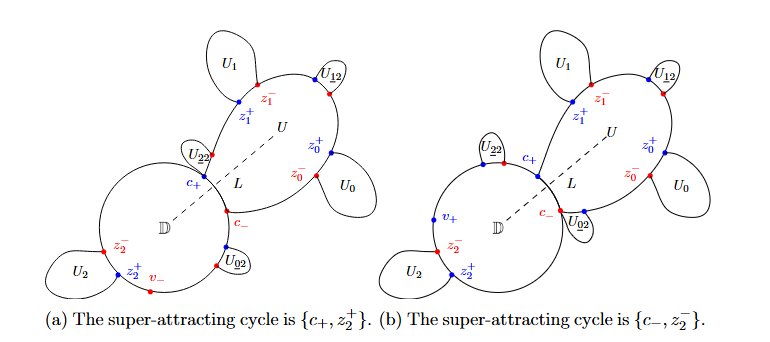}
\caption{Super-attracting endomorphism scenario for $d=2$: The two possible configurations for the preimages of $U_2$. The position of the unique preimage attached to $\mathbb{S}^1$ ($U_{\underline{0}2}$ or $U_{\underline{2}2}$) depends on which critical point is in the 2-cycle.}
 \label{fig:EndoSuper-At-Step02}
\end{figure}

Since the two cycle configurations are analogous, we assume without loss of generality that $\{c_+, z_2^+\}$ forms the super-attracting cycle (as illustrated in Figure \ref{fig:EndoSuper-At-Step02}).

For levels $k \ge 3$, the labeling of drops generally follows the scheme established for the attracting homeomorphism case. However, a modification is required for the second guideline to accommodate the specific geometry of the endomorphism. Assuming all drops up to level $k-1$ have been defined, we proceed with the following rules, recalling the definition of $\mathcal{S}^0_{\mathcal{A}_d}$ from \eqref{eq:setS0}.

\begin{enumerate}
    \item If $i_1\dots i_k\in\mathcal{S}^0_{\mathcal{A}_d}$, then $U_{i_1\dots i_k}$ is the preimage of $U_{i_2\dots i_k}$ attached to $\mathbb{S}^1$.
    \item If $i_1\dots i_k\not\in \mathcal{S}^0_{\mathcal{A}_d}$ and
    \begin{enumerate}
        \item $i_2\dots i_{k}\in\mathcal{S}^0_{\mathcal{A}_d}$, then $U_{i_1\dots i_k}$ is the preimage of $U_{i_2\dots i_k}$ attached to $U_{i_1\dots i_{k-3}}$ on the arc $(z^+_{i_1\dots i_{k-3}j},z^-_{i_1\dots i_{k-3}i_1})$, where $j=i_1-1\pmod{d}$.
        \item $i_2\dots i_{k}\not\in\mathcal{S}^0_{\mathcal{A}_d}$, then $U_{i_1\dots i_k}$ is the preimage of $U_{i_2\dots i_k}$ attached to $U_{i_1\dots i_{k-1}}$.
    \end{enumerate}
\end{enumerate}

As the only drops that intersect the forward orbit of $c_+$ are those with labels in 
\begin{equation}
\label{eq:setS2}
\mathcal{S}_{\mathcal{A}_d}^2=\left\{i_1\dots i_k\in\Xi_{\mathcal{A}_d}\colon i_j\in\{\underline{d},d\},\  j=1,\dots k\right\}, 
\end{equation}
then, 
\[
	B|_{\overline{U}_{i_1i_2\dots i_k}}\colon \overline{U}_{i_1\dots i_k}\to \overline{U}_{i_2\dots i_k},
\]
is a biholomorphism for $i_1\dots i_k \in \Xi_{\mathcal{A}_d}\setminus \mathcal{S}^2_{\mathcal{A}_d}$.

\begin{figure}
\begin{center}
	\includegraphics[width=1\linewidth]{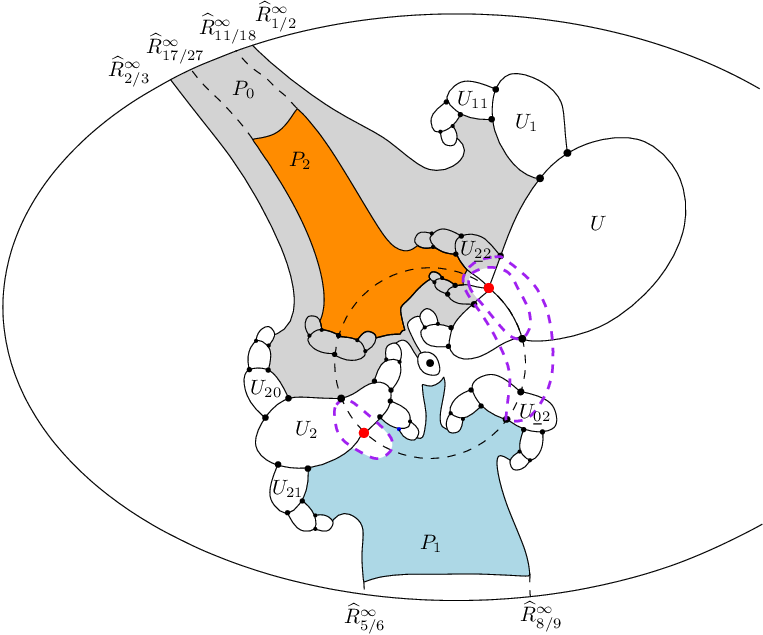}
	\caption{Super-attracting endomorphism scenario for $d=2$: A sketch of the regions $P_0$ (in gray), $P_1$ (in blue), and $P_2$ (in orange). The figure also depicts the super-attracting cycle $\{c_+,z_2^+\}$ (in red) and the boundaries of the removed disks (in purple).}
	\label{fig:EndoSuper}
\end{center}
\end{figure}

\vspace{0.5cm}
\noindent\textbf{Puzzle Pieces Construction. }
The construction of the curve $\Gamma$ follows the procedure outlined for the super-attracting homeomorphism case, with a single necessary modification in the initial step.

In the homeomorphism case, the initial segment from $0$ to $a$ was chosen to pass through the unique critical point $c$. In the current endomorphism case, two critical points $c_+$ and $c_-$ exist on $\mathbb{S}^1$. Due to their symmetry with respect to the ray of angle $2\pi\alpha$ (the line connecting $0$ and $a$), we adjust this initial segment to pass through the midpoint of the arc $(c_-, c_+)$, given by $z = e^{2\pi i \alpha}$.

Beyond this adjustment, the subsequent steps for constructing $\Gamma$ and defining the limbs proceed identically to the earlier case.

The construction generally follows the logic of the super-attracting homeomorphism case. However, a modification is required in the definition of the initial piece $P_0$ to ensure topological separation from its preimage.

Since $\{c_+, z_2^+\}$ forms a super-attracting cycle, we can define stable neighborhoods within their immediate basins. Let $\phi$ denote the linearized K\oe nigs coordinate near $c_+$. For sufficiently small $\varepsilon > 0$, we define the open set $D_{c_+} = \phi^{-1}(D(0,\varepsilon)) \subset \mathcal{A}^*(c_+)$ such that $c_- \in \partial D_{c_+}$. Similarly, let $D_{z_2^+}$ be the unique preimage of $D_{c_+}$ contained in $\mathcal{A}^*(z_2^+)$, satisfying $z_2^- \in \partial D_{z_2^+}$. These domains satisfy the inclusion properties $\overline{D}_{c_+} \subset B^{-2}(\overline{D}_{c_+})$ and $\overline{D}_{z_2^+} \subset B^{-2}(\overline{D}_{z_2^+})$.

We define the puzzle piece $P_0$ by excising these super-attracting neighborhoods from the base piece $P$ (defined in Section \ref{def:base_puzzle_P}):
\[
    P_0 := P \setminus \left(\overline{D}_{c_+} \cup \overline{D}_{z_2^+}\right).
\]

Next, we define the puzzle piece $P_1$ as the connected component of $B^{-1}(P_0)$ contained in the sector $W$ (see Figure \ref{fig:EndoSuper}).

Crucially, the removal of the topological disks $\overline{D}_{c_+}$ and $\overline{D}_{z_2^+}$ ensures that the closures are disjoint: $\overline{P}_0 \cap \overline{P}_1 = \emptyset$. The construction of $P_2$ then proceeds identically to the method described on page \pageref{def:P2_first_construction}.

\paragraph{Attracting case}

Consider a parameter $(r,\alpha) \in T_{1/2}$ in the endomorphism region ($1 < r < 2d+1$) possessing an attracting cycle $\{\xi_0, \xi_1\} \subset \mathbb{S}^1$. We focus on the scenario where both free critical points $c_+$ and $c_-$ lie within the immediate basin of attraction of the same periodic point, denoted $\mathcal{A}^*(\xi_0)$.

Depending on the specific location of $\xi_0$ relative to the critical and co-critical points, we classify the parameter into two distinct types:

\begin{itemize}
    \item Inner Case: The attracting point $\xi_0$ lies in the interval $(c_-, c_+)$.
    \item Outer Case: The attracting point $\xi_0$ lies in the union of intervals $(c_+', c_-) \cup (c_+, c_-')$.
\end{itemize}

This distinction is fundamental to the subsequent analysis because it dictates the pisition of the preimages of the drop $U_d$.
\begin{itemize}
    \item In the \textit{Inner Case}, certain preimages of $\partial U_d$ connect the boundary of the drop $\partial U$ directly to the unit circle $\mathbb{S}^1$.
    \item In the \textit{Outer Case}, preimages are either attached solely to $\partial U$ or solely to $\mathbb{S}^1$, remaining topologically distinct.
\end{itemize}
Due to these differences, we construct the puzzle pieces separately for each scenario.

\paragraph{Inner attracting} We assume that $\xi_0\in (c_-,c_+)$ and $c_+,c_-\in \mathcal{A}^*(\xi_0)$. Therefore, $$(v_+,v_-)\subset (z_d^-,z_d^+).$$

\vspace{0.5cm}
\noindent\textbf{Labeling drops of level $k\geq 2$}

Since the intervals $(c_+', c_-)$ and $(c_+, c_-')$ map injectively to $(v_+, v_-)$, there exist exactly two preimages of the drop $U_d$ whose boundaries intersect the unit circle $\mathbb{S}^1$. We distinguish them based on their position relative to the line $L = \{t e^{i\alpha} \colon t \in \mathbb{R}\}$ connecting $0$ to $a$:

\begin{itemize}
    \item Right Half-Plane: The preimage is labeled $U_{\underline{0}d}$.
    \item Left Half-Plane: The preimage is labeled $U_{\underline{d}d}$.
\end{itemize}

A key topological feature of the Inner Case is that the boundaries of these two drops, $\partial U_{\underline{0}d}$ and $\partial U_{\underline{d}d}$, intersect both the unit circle $\mathbb{S}^1$ and the boundary of the drop $\partial U \setminus \mathbb{S}^1$.

The remaining preimages of $U_d$, denoted by $U_{\underline{1}d}, \dots, U_{\underline{d-1}d}$, do not touch the unit circle and are attached to $\partial U$.

\vspace{0.5cm}
\noindent\textbf{Puzzle Pieces Construction.} 
To prove Lemma \ref{lemma:final-puzzlepiecesparticular} in this scenario, we establish the definition of the initial puzzle piece $P_0$.

Unlike the super-attracting case, the specific distribution of drops in the inner attracting scenario naturally ensures the separation of the puzzle pieces (see Figure \ref{fig:EndoInner}). Consequently, no removal of topological disks is required. We simply define $P_0$ as the base puzzle piece $P$ introduced in Section \ref{def:base_puzzle_P}:
\[
    P_0 := P.
\]
The subsequent piece $P_2$ is then defined following the standard procedure established in previous sections.

\begin{figure}[htbp]
\begin{center}
	\includegraphics[width=1\linewidth]{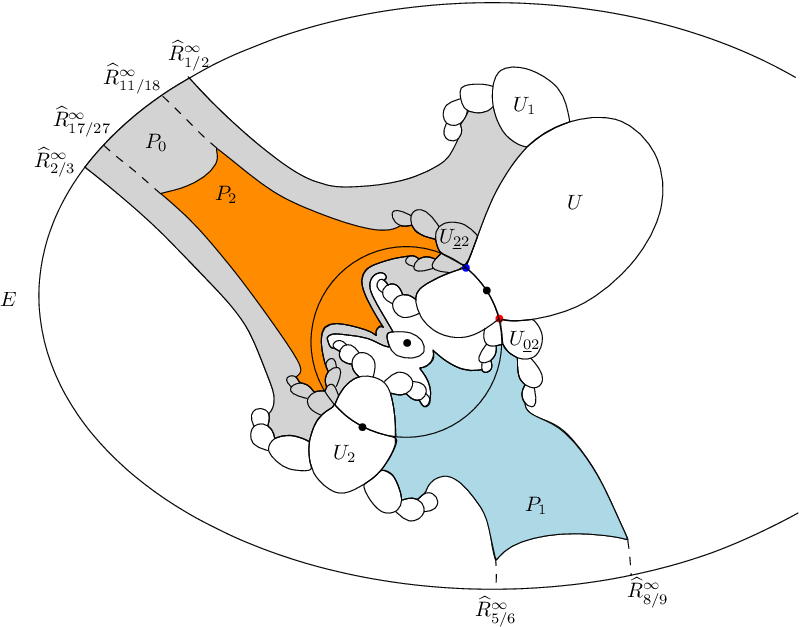}
	\caption{Inner attracting endomorphism scenario for $d=2$: A sketch of the regions $P_0$ (in gray), $P_1$ (in blue), and $P_2$ (in orange). In this case, no disks are removed to define $P_0$, as the natural distribution already ensures that $\overline{P_0} \cap \overline{P_1} = \emptyset$. As before, $P_2 = B^{-1}(P_1)$ is strictly contained in $P_0$.}
	\label{fig:EndoInner}
\end{center}
\end{figure}

\paragraph{Outer attracting}
We now assume that $\xi_0\in (c_+',c_-)\cup (c_+,c_-')$ and $c_+,c_-\in\mathcal{A}^*(\xi_0 )$. Therefore, $$(v_+,v_-)\cap (z_d^-,z_d^+)=\emptyset.$$

\noindent\textbf{Labeling drops of level $k\geq 2$.}
The position of the preimages in the Outer Case closely mirrors that of the super-attracting endomorphism scenario.

Since the arc $[c_-', c_+']$ maps injectively to $[v_-, v_+]$ and orientation is preserved on $[c_+, c_-]$, there exists a unique preimage of the arc $[z_d^-, z_d^+]$ on the unit circle. Consequently, we have the following configuration (see Figure \ref{fig:EndoOuter-At-Step02}):
\begin{itemize}
    \item A unique preimage of $U_d$ is attached to $\mathbb{S}^1$.
    \item The remaining $d$ preimages of $U_d$ are attached solely to $\partial U$.
\end{itemize}

As in previous cases, the label for the unique preimage attached to $\mathbb{S}^1$ is determined by its position relative to the line $L$ connecting $0$ and $a$ (i.e., whether it lies in the left or right open half-plane).

The labeling of the preimages depends on the location of the attracting periodic point $\xi_0$ within the outer intervals:

\begin{enumerate}
    \item \textit{Case $\xi_0 \in (c_+, c_-')$:}
    The unique preimage of $U_d$ attached to $\mathbb{S}^1$ lies in the right half-plane and is labeled $U_{\underline{0}d}$. The remaining preimages attached to $\partial U$ are labeled $U_{id}$ for $i = \underline{1}, \dots, \underline{d}$ in increasing order.

    \item \textit{Case $\xi_0 \in (c_+', c_-)$:}
    The unique preimage of $U_d$ attached to $\mathbb{S}^1$ lies in the left half-plane and is labeled $U_{\underline{d}d}$. The remaining preimages attached to $\partial U$ are labeled $U_{id}$ for $i = \underline{0}, \dots, \underline{d-1}$ in increasing order.
\end{enumerate}

\begin{figure}
\centering
	\includegraphics[width=1\linewidth]{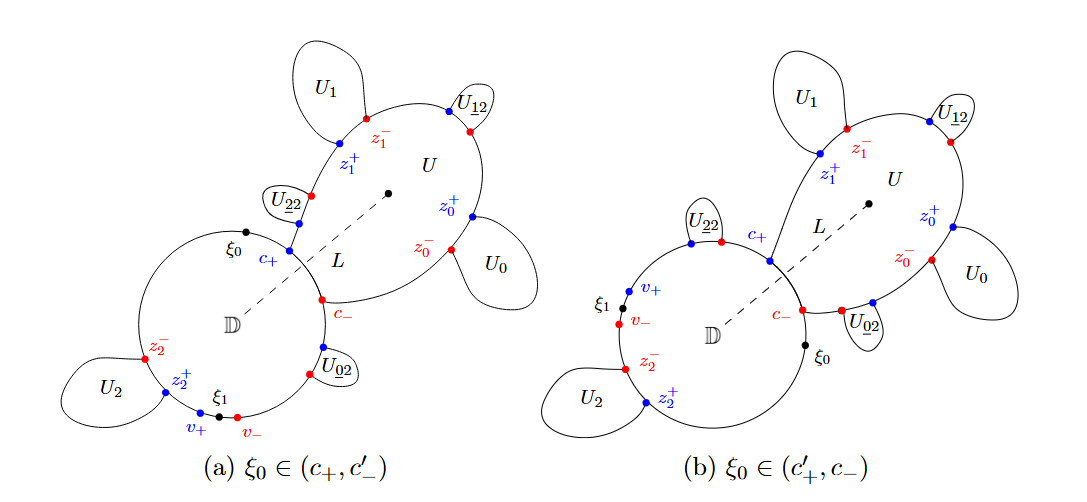}
 \caption{Outer attracting endomorphism scenario for $d=2$: The two possible configurations for the preimages of $U_2$, depending on the location of the attracting point $\xi_0$. (a) If $\xi_0 \in (c_+, c_-')$, the unique preimage attached to $\mathbb{S}^1$ is $U_{\underline{0}2}$. (b) If $\xi_0 \in (c_+', c_-)$, the unique preimage attached to $\mathbb{S}^1$ is $U_{\underline{2}2}$.}
 \label{fig:EndoOuter-At-Step02}
\end{figure}

Since the two configurations in the Outer Case are analogous, we restrict our detailed analysis to the scenario depicted in Figure \ref{fig:EndoOuter-At-Step02}. The construction for the alternative case proceeds symmetrically.

\vspace{0.5cm}
\noindent\textbf{Puzzle Pieces Construction.} 
For this scenario, we must redefine the puzzle piece $P_0$, as in previous cases.
However, the procedure in this case is similar to the attracting homeomorphism case: since $\{\xi_0,\xi_1\}$ is an attracting cycle, there is $\epsilon>0$ such that $c\in \partial D_{0} =\partial \phi^{-1}\left(D(0,\epsilon )\right)$, where $\phi$ are the K\oe nigs coordinates, with $\overline{D}_{0}\subset \mathcal{A}^*(\xi_0)$.
Note that $D_{0}$ does not intersect $\overline{U}$. Let $D_{2}$ be the second preimage of $D_{0}$ under $B$ such that $\overline{D}_{0}\subset \overline{D}_2\subset \mathcal{A}^*(\xi_0)$.
Define the puzzle piece $P_0$ as the open set
\[
    P_0 := P\setminus \overline{D}_2,
\]
where $P$ is the base puzzle piece defined in Section \ref{def:base_puzzle_P}. 

\begin{figure}
\begin{center}
	\includegraphics[width=1\textwidth]{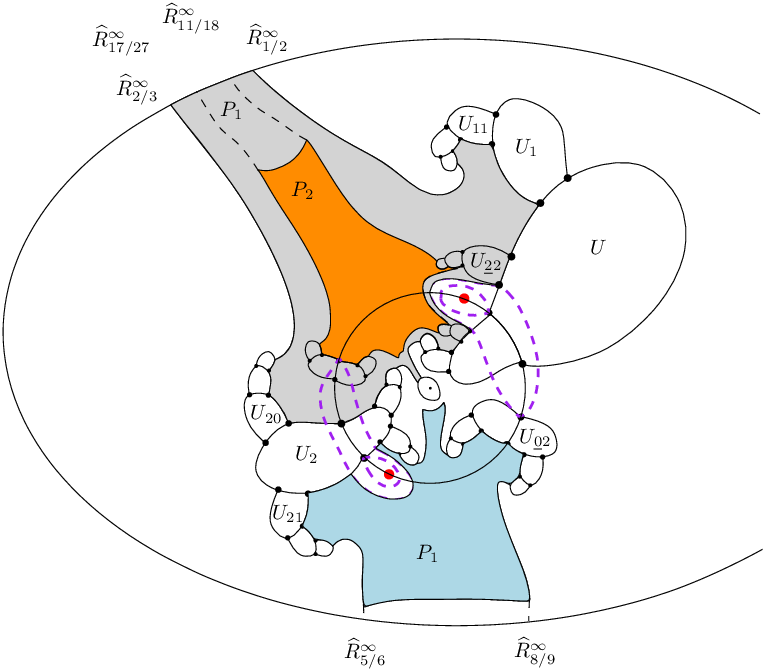}
	\caption{Outer attracting endomorphism scenario for $d=2$: A sketch of the regions $P_0$ (in gray), $P_1$ (in blue), and $P_2$ (in orange). The figure also depicts the attracting cycle $\{\xi_0,\xi_1\}$ (in red) and the boundaries of the removed disks (in purple).}
	\label{fig:EndoOuter}
\end{center}
\end{figure}

\subsection{The Diffeomorphism Case}
We examine the region $T_{1/2} \cap \{r > 2d+1\}$, where the restriction $B|_{\mathbb{S}^1}$ acts as a circle diffeomorphism. In this region, the map possesses two free critical points $c_\pm$ that are symmetric with respect to the unit circle ($\mathcal{I}(c_\pm) = c_\mp$). They are located such that $c_- \in \mathbb{D}$ and $c_+ \in \mathbb{C} \setminus \overline{\mathbb{D}}$. Notably, the points $c_-$, $c_+$, the parameter components $0, a, 1/\bar{a}$, and the midpoint $e^{2\pi i\alpha}$ are all collinear.

The circle dynamics exhibit two distinct periodic orbits of period 2:
\begin{itemize}
    \item An attracting 2-cycle, denoted $\{\xi_0, \xi_1\}$.
    \item A repelling 2-cycle, denoted $\{w_0, w_1\}$.
\end{itemize}
The primary objective of this section is to prove that the repelling cycle $\{w_0, w_1\}$ is \emph{bi-accessible}. As defined previously, let $U$ denote the the bounded domain in $\widehat{\mathbb{C}} \setminus \overline{\mathbb{D}}$ mapping to $\mathbb{D}$ and containing $a$. Its boundary $\partial U$ maps $d$-to-one onto $\mathbb{S}^1$. A \emph{drop of level $k$} is any connected component of $B^{-k}(U)$.

\vspace{0.5cm}
\noindent\textbf{Labeling Drops, Roots and Necks.}
We label the connected components of $B^{-1}(U)$ using admissible words from Subsection \ref{sec:rules-admissibility}. In the diffeomorphism region, these preimages are pairwise disjoint. To establish a reference frame, we define:
\begin{itemize}
    \item The reference point $z_{\text{ref}} = e^{2\pi i \alpha} \in \mathbb{S}^1$, which is the intersection of the unit circle and the line segment connecting $0$ to $a$.
    \item The \emph{neck} of $U$, denoted $\tilde{z} \in \partial U$, as the unique point on $\partial U$ intersecting the segment from $0$ to $a$.
    \item The \emph{shaft}, denoted $S$, as the line segment connecting the neck $\tilde{z}$ to the reference point $z_{\text{ref}}$.
\end{itemize}

The set $B^{-1}(z_{\text{ref}})$ contains $2d+1$ points. Since $B|_{\mathbb{S}^1}$ is a degree-one diffeomorphism, exactly one preimage lies on $\mathbb{S}^1$. Since $B|_{\partial U}$ is a degree-$d$ covering, exactly $d$ preimages lie on $\partial U$. We label the corresponding drops $U_i$ as follows:

\begin{enumerate}
    \item \textbf{Drops associated to $\partial U$ ($i = 0, \dots, d-1$):}
    Let $\{z_0, \dots, z_{d-1}\}$ be the $d$ preimages of $z_{\text{ref}}$ located on $\partial U$. We label them along the boundary $\partial U$, starting from the neck $\tilde{z}$. For each $z_i$:
    \begin{itemize}
        \item $U_i$ is the component of $B^{-1}(U)$ associated with $z_i$.
        \item The \emph{root} of $U_i$ is $z_i$.
        \item The \emph{neck} $\tilde{z}_i \in \partial U_i$ is the unique preimage of $\tilde{z}$ on the boundary of $U_i$.
        \item The \emph{shaft} $S_i$ is the component of $B^{-1}(S)$ connecting $\tilde{z}_i$ to $z_i$.
    \end{itemize}

    \item \textbf{Drop associated to $\mathbb{S}^1$ ($i = d$):}
    Let $z_d$ be the unique preimage of $z_{\text{ref}}$ located on $\mathbb{S}^1$.
    \begin{itemize}
        \item $U_d$ is the component of $B^{-1}(U)$ associated with $z_d$.
        \item The \emph{root} of $U_d$ is $z_d \in \mathbb{S}^1$.
        \item The \emph{neck} $\tilde{z}_d \in \partial U_d$ is the unique preimage of $\tilde{z}$ on $\partial U_d$.
        \item The \emph{shaft} $S_d$ is the component of $B^{-1}(S)$ connecting $\tilde{z}_d$ to $z_d$.
    \end{itemize}
\end{enumerate}

Figure \ref{fig:DifeoLevel1} illustrates this configuration for $d=2$.

\begin{figure}
\begin{center}
	\includegraphics[scale=0.9]{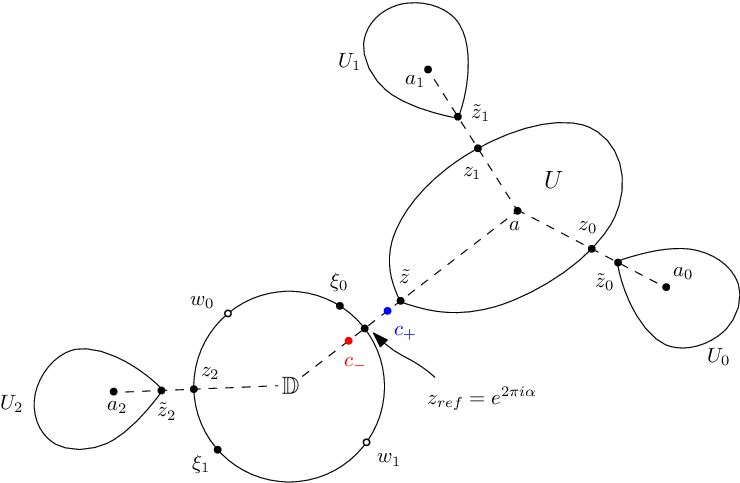}
	\caption{Sketch of the drops, roots and necks of level 1 for the case $d=2$. The preimages of the level 0 drop $U$ are shown: $U_0$ and $U_1$ and their roots $z_0$, $z_1$ and necks $\tilde{z}_0$ and $\tilde{z}_1$, while $U_2$ is the unique preimage of $z_ref=e^{2\pi i\alpha}$ on the circle,  associated with the root $z_2$ on $\mathbb{S}^1$, with neck $\tilde{z}_2$.}
    \label{fig:DifeoLevel1}
\end{center}
\end{figure}

\noindent\textbf{Drops of level $2$}

The labeling of the preimages of $U_d$ (denoted $U_{id}$ with $i=\underline{0},\dots , \underline{d}$) mirrors the procedure in the Attracting Homeomorphism case. We assign labels based on the position of the reference angle $\arg(e^{2\pi i \alpha})$ relative to the attracting fixed point $\xi_0$:

\begin{enumerate}
    \item \textbf{Case $\arg (e^{2\pi i \alpha}) \leq \arg (\xi_0)$:}
    The unique preimage associated to $\mathbb{S}^1$ lies to the right of the line segment from $0$ to $a$ and is labeled $U_{\underline{0}d}$. The remaining components attached to $\partial U$ are labeled $U_{id}$ for $i=\underline{1},\dots, \underline{d}$ in increasing order relative to the orientation of $\partial U$ starting from the neck $\tilde{z}$.

    \item \textbf{Case $\arg (e^{2\pi i\alpha}) > \arg (\xi_0)$:}
    The unique preimage assocaited to $\mathbb{S}^1$ is labeled $U_{\underline{d}d}$. The remaining components associated to $\partial U$ are labeled $U_{id}$ for $i=\underline{0},\dots, \underline{d-1}$ in increasing order relative to the orientation of $\partial U$ starting from $\tilde{z}$.
\end{enumerate}

These configurations are illustrated in Figure \ref{fig:Difeo-At-Step02}.

\begin{figure}
\centering
	\includegraphics[scale=0.75]{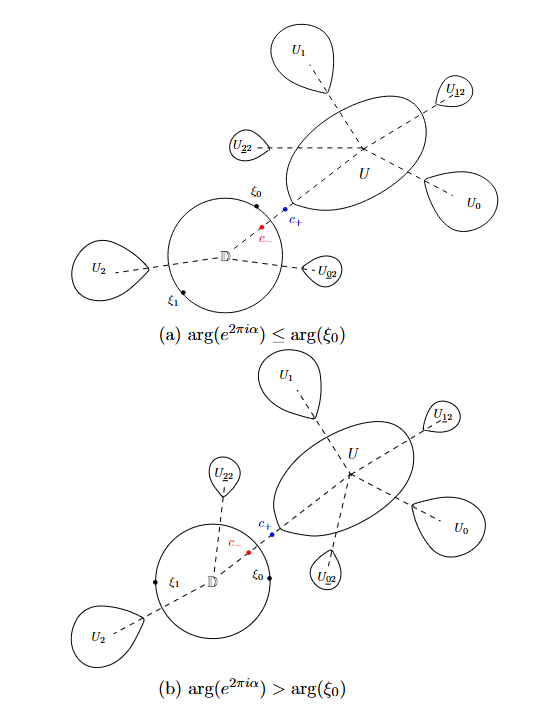}
 \caption{Diffeomorphism scenario for $d=2$: The two possible configurations for the preimages of $U_2$, depending on the location of the attracting point $\xi_0$. (a) If $\arg (e^{2\pi i \alpha}) \leq \arg (\xi_0)$, the unique preimage associated to $\mathbb{S}^1$ is $U_{\underline{0}2}$. (b) If $\arg (e^{2\pi i\alpha}) > \arg (\xi_0)$, the unique preimage associated to $\mathbb{S}^1$ is $U_{\underline{2}2}$.}
 \label{fig:Difeo-At-Step02}
\end{figure}
\vspace{0.5cm}
\noindent\textbf{Drops of level $k\geq 3$}

For higher levels ($k \geq 3$), we adopt the labeling scheme from the attracting homeomorphism case. We first define the subset of admissible words $\mathcal{S}^0_{\mathcal{A}_d}$ consisting of drops attached to the circle dynamics:
\begin{equation}
    \mathcal{S}^0_{\mathcal{A}_d} = \left\{ i_1 \dots i_k \in \Xi_{\mathcal{A}_d} \colon i_j \in \{\underline{0}, d\}, \ j=1, \dots, k \right\}.
\end{equation}

Given the drops up to level $k-1$, the drop $U_{i_1 \dots i_k}$ is defined by the following recursive rules:

\begin{enumerate}
    \item If the word $i_1 \dots i_k \in \mathcal{S}^0_{\mathcal{A}_d}$, then $U_{i_1 \dots i_k}$ is the unique preimage of $U_{i_2 \dots i_k}$ associated to $\mathbb{S}^1$.

    \item If the word $i_1 \dots i_k \notin \mathcal{S}^0_{\mathcal{A}_d}$, the association depends on the shifted word $i_2 \dots i_k$:
    \begin{itemize}
        \item If $i_2 \dots i_k \in \mathcal{S}^0_{\mathcal{A}_d}$, then $U_{i_1 \dots i_k}$ is the preimage of $U_{i_2 \dots i_k}$ associated to the drop $U_{i_1 \dots i_{k-3}}$ along the arc $(z_{i_1 \dots i_{k-3}j}, z_{i_1 \dots i_{k-3}i_1})$, where $j \equiv i_1-1 \pmod{d}$.
        \item If $i_2 \dots i_k \notin \mathcal{S}^0_{\mathcal{A}_d}$, then $U_{i_1 \dots i_k}$ is the preimage of $U_{i_2 \dots i_k}$ associated to the drop $U_{i_1 \dots i_{k-1}}$.
    \end{itemize}
\end{enumerate}

As with previous levels, the map $B$ acts on these drops as a right-hand shift on the symbolic labels.
\vspace{0.5cm}

\noindent\textbf{General Shafts ($k \geq 2$)}

For levels $k \ge 2$, we define the shafts recursively based on the construction at level 1. Let $w = i_1 i_2 \dots i_k \in \Xi_{\mathcal{A}_d}$ be an admissible word, and assume the shaft $S_{i_2 \dots i_k}$ for the shifted word has already been defined.

We define the \emph{shaft} $S_{i_1 i_2 \dots i_k}$ as the unique connected component of $B^{-1}(S_{i_2 \dots i_k})$ attached to the drop $U_{i_1 i_2 \dots i_k}$. It satisfies the following properties:

\begin{itemize}
    \item It connects the neck $\tilde{z}_{i_1 \dots i_k} \in \partial U_{i_1 \dots i_k}$ to the root $z_{i_1 \dots i_k}$.
    \item The root $z_{i_1 \dots i_k}$ lies on the boundary of the preceding drop $U_{i_1 \dots i_{k-1}}$ (or on $\mathbb{S}^1$ if the drop is attached directly to the circle).
    \item The map $B$ maps the shaft $S_{i_1 \dots i_k}$ bijectively onto the shaft $S_{i_2 \dots i_k}$.
\end{itemize}

\vspace{0.5cm}
\noindent\textbf{Limbs.} 
In the diffeomorphism region, the drops are topologically disjoint. To ensure the connectedness of the limbs, we extend their definition to explicitly include the connecting shafts $S_i$.

Let $j \in \{0, d-1\}$. We denote by $j^k$ the word of length $k$ consisting entirely of the symbol $j$, and by $\overline{j}$ the infinite sequence of $j$'s. We define the \emph{limbs} starting at $U$ as:
\begin{equation*}
    L_{\overline{j}} = \overline{U \cup \bigcup_{k\geq 1} U_{j^k}} \cup \bigcup_{k\geq 1} S_{j^k}.
\end{equation*}

We say that a limb $L_{\overline{j}}$ \emph{starts at} $U$, meaning $U$ is the drop of the lowest level within the set.

We recursively define the preimages of these limbs. For any index $i \in \{0, \dots, d\}$, let $L_{i \cdot \overline{j}}$ denote the connected component of $B^{-1}(L_{\overline{j}})$ contained in $\mathbb{C} \setminus \mathbb{D}$ that starts at the drop $U_i$.

For any $k \in \mathbb{N}$ and any admissible word $w = i_1 \dots i_k \in \Xi_{\mathcal{A}_d}$, we define the limb $L_{w \cdot \overline{j}}$ as the unique connected component of $B^{-1}(L_{i_2 \dots i_k \cdot \overline{j}})$ that starts at the drop $U_{w}$.

\begin{lemma}
\label{lemma:diam_drops_segments}
	Let $j \in \{0, d-1\}$. Consider the drop of level $k$ given by $U_{j^k} \subset L_{\overline{j}}$ and the shaft $S_{j^k}$. Then:
	\[
		\lim_{k \to \infty} \operatorname{diam}(\overline{U}_{j^k}) = 0 \quad \text{and} \quad \lim_{k \to \infty} \operatorname{length}(S_{j^k}) = 0.
	\]
\end{lemma}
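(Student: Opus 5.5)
The plan mirrors the proofs of Lemma~\ref{lemma:diamUto0} and Remark~\ref{remark:geometric_convergence}, now applied to the compact set $K=\overline{U}_j\cup S_j$ rather than to $\overline{U}_j$ alone. The first step is to produce a single analytic inverse branch that generates both sequences. Fix $j\in\{0,d-1\}$. For $k\ge 2$ the word $j^k$ contains neither the symbol $d$ nor an underlined symbol, so the drops $U_{j^k}$ and shafts $S_{j^k}$ stay away from $\mathbb{S}^1$. In the diffeomorphism region the free critical points $c_\pm$ lie off the circle and are attracted to the cycle $\{\xi_0,\xi_1\}\subset\mathbb{S}^1$. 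Their forward orbits therefore accumulate only on $\mathbb{S}^1$, on the superattracting points $0,\infty$, and in small neighbourhoods of the attracting cycle; the critical points $a$ and $1/\bar a$ map directly to $0$ and $\infty$. Hence the closed postcritical set $\overline{\mathcal{P}}$ is at positive distance from $K$. I expect to verify this by checking that $c_+$ and its orbit do not meet $\overline{U}_j\cup S_j$, which follows because $B(\overline{U}_j\cup S_j)=\overline U\cup S$ while $c_+\notin U$ for $r>2d+1$, as $c_+$ lies between $1$ and $a$ with $|c_+|<|a|$.

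The second step is to set up the contraction. Choose a simply connected neighbourhood $D\supset K$ disjoint from $\overline{\mathcal{P}}$. Theorem~\ref{thm:SteinmetzInverse} then provides an inverse branch $\phi_j$ on $D$ with $\phi_j(\overline U\cup S)=\overline U_j\cup S_j$. Since $B$ acts as a shift on labels and $B(S_{j^{k+1}})=S_{j^k}$, we get $\phi_j(\overline U_{j^k}\cup S_{j^k})=\overline U_{j^{k+1}}\cup S_{j^{k+1}}$. To apply Lemma~\ref{lemma:diam0} I need $\phi_j(D)\subset D$, and this will need some care. I would take $D$ to be a thin neighbourhood of the limb segment $\overline{\bigcup_{k\ge1}(U_{j^k}\cup S_{j^k})}$ together with $\overline U$; the branch $\phi_j$ maps $\overline U$ onto $\overline U_j$ and maps the limb into itself. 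The domain $D$ meets $\mathcal{J}$, because $\partial U_j\subset \mathcal{J}$ as $B^2(\partial U_j)=\mathbb{S}^1$ up to preimages. Lemma~\ref{lemma:diam0} then gives $\operatorname{diam}\phi_j^{k-1}(K)\to0$, which settles the first limit.

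The third step handles the shafts. A vanishing diameter does not control length, so I would use a derivative bound instead. By Schwarz--Pick, $\phi_j$ is a strict contraction of the hyperbolic metric of $D$, and it is uniform on the compact set $K$. The iterates $\phi_j^n$ remain in a compact subset where the hyperbolic and Euclidean densities are comparable. Combining these, $|(\phi_j^{n})'|\le C\lambda^n$ on $S_j$ for some $\lambda<1$. Since $S_j$ is the preimage of the straight segment $S$ under a branch that is smooth up to the boundary, it is a rectifiable analytic arc. Therefore
\[
\operatorname{length}(S_{j^{k}})=\int_{S_j}|(\phi_j^{k-1})'(z)|\,|dz|\le C\lambda^{k-1}\operatorname{length}(S_j)\to0 .
\]

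The main obstacle I expect is the second step: building a simply connected $D$ that avoids the postcritical set and satisfies $\phi_j(D)\subset D$. Should that invariance fail, my fallback is to run the hyperbolic-metric argument directly on $\Omega=\widehat{\mathbb{C}}\setminus\overline{\mathcal{P}}$. This set is hyperbolic because $\overline{\mathcal{P}}$ contains at least three points, namely $0$, $\infty$ and $\xi_0$. On $\Omega$ every inverse branch of $B$ does not increase the hyperbolic metric, and the contraction is strict on compact sets since $B:B^{-1}(\Omega)\to\Omega$ is a covering of degree greater than one. The same integral estimate then yields geometric decay of both diameters and lengths, provided the $K_k$ stay in a compact subset of $\Omega$, which holds because they accumulate only at the fixed point $\zeta_j$ off the postcritical set.
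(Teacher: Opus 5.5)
Your route is the paper's own. The drops shrink because the inverse branch $\phi_j$ contracts (Lemma~\ref{lemma:diam0}), and the shafts shrink because $S_{j^k}=\phi_j^{k-1}(S_j)$. Your integral $\int_{S_j}|(\phi_j^{k-1})'|\,|dz|$ is exactly what the paper's phrase ``the contraction also implies the length converges to zero'' needs, since a diameter bound does not control length. However, the argument is not closed, at the point you flag yourself.

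\textbf{The contraction step.} Even granting $\phi_j(D)\subset D$, Schwarz--Pick only gives a pointwise contraction, uniform on compact subsets of $D$. The chain rule for $(\phi_j^{n})'$ on $S_j$ evaluates $\phi_j'$ at points of $\phi_j^m(K)$, not of $K$. So you need both $\phi_j(D)\subset D$ and $\bigcup_m\phi_j^m(K)$ relatively compact in $D$. That $\phi_j$ maps the limb into itself does not make a thin neighbourhood of the limb invariant. Since the accumulation set of the limb is unknown a priori, you cannot even ensure such a neighbourhood misses $\overline{\mathcal P}$. Your fallback is circular: the fact that the pieces accumulate only at $\zeta_j$ is Lemma~\ref{lemma:intersection_is_point_diffeo}, whose proof uses the present lemma. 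The paper's two-line proof asserts the same contraction without checking any of this.

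\textbf{The postcritical check.} Your check is a non sequitur. Knowing $c_+\notin U$ says nothing about whether some $B^n(c_+)$, $n\ge 1$, lands in $\overline U_j\cup S_j$. The relation $B(\overline U_j\cup S_j)=\overline U\cup S$ only transfers the question to $\overline U\cup S$, and there it fails at once. On the ray through $a$, $|B(te^{2\pi i\alpha})|=t^{d+1}\bigl(\frac{r-t}{rt-1}\bigr)^d$ equals $1$ at $t=1$, increases up to $t=|c_+|$, then decreases to $0$ at $t=r$. Hence the neck lies beyond $c_+$, and the critical point $c_+$ sits on the shaft $S$ itself. What must actually be shown is that the critical orbits, which lie in $\mathcal{A}^*(\xi_0)\cup\mathcal{A}^*(\xi_1)$, never enter $\overline U_{j^k}\cup S_{j^k}$ for $k\ge1$. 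This is genuine dynamical input that you do not supply, and neither does the paper.

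\textbf{How to finish once that is granted.} Assuming $\overline{\mathcal P}\cap(\overline U_j\cup S_j)=\emptyset$, you can drop the self-map altogether.
\begin{enumerate}
\item Take any simply connected neighbourhood $D\subset\mathbb{C}\setminus\overline{\mathbb{D}}$ of $K=\overline U_j\cup S_j$ that misses $\overline{\mathcal P}$.
\item All branches $g_n$ of $B^{-n}$ exist on $D$. They omit the forward-invariant set $\overline{\mathcal P}\supset\{0,\infty,\xi_0\}$, so they form a normal family.
\item Every limit is constant, because $D$ meets $\mathcal J$ (for instance $\partial U_j$ contains preimages of the repelling cycle on $\mathbb{S}^1$). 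A non-constant limit would give a disc $\Delta$ about a Julia point with $B^{n}(\Delta)\subset D$ for infinitely many $n$, contradicting the blow-up property.
\item Since $\overline U_{j^{n+1}}\cup S_{j^{n+1}}=g_n(K)$ for a suitable branch, and these sets meet $\mathcal J\subset\mathbb C$, you get $\operatorname{diam} g_n(K)\to0$ and $g_n'\to0$ uniformly on $K$. Your length integral therefore tends to $0$.
\end{enumerate}
This proves the lemma as stated, with no invariance and no prior knowledge of $\zeta_j$. The geometric rate that Lemma~\ref{lemma:intersection_is_point_diffeo} sums needs one more step, e.g.\ uniform expansion of the hyperbolic map $B$ near $\mathcal J$ once the pieces are small.
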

\begin{proof}
	The convergence of the drops $\overline{U}_{j^k}$ follows from the same argument as in the homeomorphism case (Lemma \ref{lemma:diam0}), as the inverse branch $\phi _j$ is a strict contraction on the compact set $\overline{U} \cup \bigcup \overline{U}_i$. 
    
    Since the shaft $S_{j^k}$ connects the root of $U_{j^k}$ to its neck, and $S_{j^k} = \phi_j^{k-1}(S_j)$, the contraction of the inverse branch $\phi_j$ also implies that the length of these segments converges geometrically to zero as $k \to \infty$.
\end{proof}

\begin{lemma}
\label{lemma:intersection_is_point_diffeo}
Let $j \in \{0, d-1\}$. Then, the intersection
\begin{equation}
\label{eq:FixedPointInBoundaryDiffeo}
	\bigcap_{k=1}^\infty L_{j^k\cdot \overline{j}}
\end{equation}
is a single point, denoted as $\zeta_j$, which is fixed under $B$ and lies on $\mathbb{C} \setminus \overline{\mathbb{D}}$.
\end{lemma}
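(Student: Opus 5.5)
The plan is to follow the proof of Lemma~\ref{lemma:intersection_is_point} step for step. The shafts are the only new ingredient: they make the limb segments connected even though the drops are now pairwise disjoint.

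\textbf{Step 1: nested compacta.} For $k\ge 1$ define the limb segment
\[
L_{j^k\cdot\overline{j}}=\overline{\textstyle\bigcup_{l\ge k}U_{j^l}}\cup\textstyle\bigcup_{l\ge k+1}S_{j^l}.
\]
This is the component of $B^{-k}(L_{\overline{j}})$ starting at $U_{j^k}$, obtained by applying the inverse branch $\phi_j$ of Remark~\ref{remark:biholomorphism}, adapted to the diffeomorphism labelling, $k$ times. Since $\phi_j$ carries $U_{j^l}$ to $U_{j^{l+1}}$ and $S_{j^l}$ to $S_{j^{l+1}}$, we get $L_{j^{k+1}\cdot\overline{j}}=\phi_j(L_{j^k\cdot\overline{j}})\subset L_{j^k\cdot\overline{j}}$. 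The sets are nested.

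They are also compact and connected. The root $z_{j^{l+1}}$ lies on $\partial U_{j^l}$, and the neck $\tilde z_{j^{l+1}}$ lies on $\partial U_{j^{l+1}}$. Hence the shaft $S_{j^{l+1}}$ joins consecutive drops, and the union is a connected chain. Taking closures preserves connectedness, and boundedness gives compactness once the diameters are controlled.

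\textbf{Step 2: diameters tend to zero.} By the triangle inequality along the chain,
\[
\operatorname{diam}(L_{j^k\cdot\overline{j}})\le\sum_{l\ge k}\operatorname{diam}(\overline U_{j^l})+\sum_{l\ge k+1}\operatorname{length}(S_{j^l}).
\]
Lemma~\ref{lemma:diam_drops_segments} gives that both terms tend to zero. To make the tails vanish we need summability, not just convergence to zero. I would get this exactly as in Remark~\ref{remark:geometric_convergence}. Choose a simply connected neighbourhood $D$ of $K=\overline U_j\cup S_j$ that avoids the postcritical set. Its closure must miss the attracting cycle $\{\xi_0,\xi_1\}$, the orbits of $c_\pm$ and the points $0,\infty$. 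This is possible because $K$ lies off $\mathbb S^1$ and away from $\mathcal{A}^*(\xi_0)$ and its symmetric orbit.

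Schwarz--Pick then gives a uniform contraction factor $\lambda<1$ for $\phi_j$ on $K$. This yields $\operatorname{diam}(\phi_j^{l-1}(K))\le C\lambda^{l-1}$. The drop and shaft terms are therefore geometric, so the tails tend to zero. Cantor's intersection theorem then gives $\bigcap_k L_{j^k\cdot\overline{j}}=\{\zeta_j\}$.

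\textbf{Step 3: the point is fixed and lies outside $\overline{\mathbb D}$.}
\begin{itemize}
\item \emph{Fixed.} $B$ is a left inverse of $\phi_j$, so $B(L_{j^{k+1}\cdot\overline{j}})=L_{j^k\cdot\overline{j}}$. The same chain of inclusions as before then gives $B(\zeta_j)=\zeta_j$.
\item \emph{Outside the closed disk.} Every drop $U_{j^l}$ and shaft $S_{j^l}$ with $j\in\{0,d-1\}$ is a preimage of $U$ or $S$ attached along $\partial U$, and so lies in $\mathbb C\setminus\overline{\mathbb D}$. For the closure, the intersection is contained in $\overline U_{j^k}$ union later pieces, all inside $\phi_j^{k}(K)$. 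This set shrinks geometrically inside $\phi_j(D)\Subset\mathbb C\setminus\overline{\mathbb D}$.
\end{itemize}

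\textbf{Main obstacle.} The delicate point is Step 2: producing a single simply connected domain $D\supset K$ that is disjoint from the postcritical set and satisfies $\phi_j(D)\subset D$, so that Lemma~\ref{lemma:diam0} and the Schwarz--Pick argument apply. In the diffeomorphism case the free critical point $c_+$ lies outside $\overline{\mathbb D}$. We therefore have to check that its orbit stays away from the chain $U,S_j,U_j$. It is attracted to the cycle on $\mathbb S^1$ inside $\mathcal A^*(\xi_0)$, which is a Fatou component. The chain $U\cup S_j\cup U_j$ hugs $\partial U$ and meets $\mathcal{A}^*(\xi_0)$ only if the shaft $S$ does. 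So I would first verify that $S$ can be chosen, by perturbing it within the neck region, to avoid the closure of the critical orbit. After that, thicken $K$ slightly to obtain $D$.
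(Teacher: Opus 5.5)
Your outline is the paper's proof. You bound $\operatorname{diam}(L_{j^k\cdot\overline{j}})$ by the tail of $\sum_l(\operatorname{diam}\overline U_{j^l}+\operatorname{length}S_{j^l})$, use geometric decay to make the tail vanish, apply Cantor's theorem, and read off $B(\zeta_j)=\zeta_j$ from $B(L_{j^{k+1}\cdot\overline j})=L_{j^k\cdot\overline j}$. The paper takes the geometric decay directly from Lemma~\ref{lemma:diam_drops_segments}.

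The gap is in the part you add, your own derivation of that decay in Step~2. A Schwarz--Pick constant $\lambda<1$ for $\phi_j$ on $K=\overline U_j\cup S_j$ can only be iterated if the iterates $\phi_j^{l}(K)$ stay in one compact subset of a domain $D$ with $\phi_j(D)\subset D$. In the diffeomorphism case the drops are pairwise disjoint, and $\phi_j(K)=\overline U_{jj}\cup S_{jj}$ is attached to $K$ only at the root $z_{jj}\in\partial U_j$. So no slight thickening of $K$ is $\phi_j$-invariant. Any invariant $D\supset K$ must contain the whole chain $\bigcup_l\phi_j^l(K)$. A uniform $\lambda$ would then need the closure of that chain inside $D$, which presupposes the convergence to $\zeta_j$ you are trying to prove. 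Hence $\operatorname{diam}(\phi_j^{l-1}(K))\le C\lambda^{l-1}$ does not follow. Your ``main obstacle'' (keeping the critical orbit off $K$) is a necessary condition, but it is not where the argument breaks.

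A non-circular route uses hyperbolicity of $B$:
\begin{itemize}
\item All critical orbits tend to $0$, $\infty$ or $\{\xi_0,\xi_1\}$, so $\overline{P(B)}$ lies in the Fatou set.
\item On a simply connected neighbourhood $V$ of $K$ in $\widehat{\mathbb C}\setminus\overline{P(B)}$, every branch $\phi_j^l$ is defined. The family is normal by Montel, and its limits are constant because $V$ meets $\mathcal J$. This gives $\operatorname{diam}\phi_j^l(K)\to0$ without any invariance of $V$.
\item Each piece meets $\mathcal J$, so the pieces eventually lie in a neighbourhood of $\mathcal J$ carrying a conformal metric that $B$ expands uniformly. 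From that level on, the pieces shrink geometrically.
\end{itemize}

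For the last claim, passing from ``every piece lies in $\mathbb C\setminus\overline{\mathbb D}$'' to the limit only gives $|\zeta_j|\ge1$, and your fix uses the same invalid invariant $D$. It is cleaner to note that $\zeta_j$ is fixed by $B$. The restriction $B|_{\mathbb S^1}$ is a circle diffeomorphism with rotation number $1/2$, so it has no fixed points on $\mathbb S^1$. Therefore $|\zeta_j|>1$.
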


\begin{proof}
    By definition, the limb segment $L_{j^k \cdot \overline{j}}$ is the closure of the union of nested drops and their shafts starting from level $k$:
    \[
    L_{j^k \cdot \overline{j}} = \overline{\bigcup_{l=k}^{\infty} \left( U_{j^l} \cup S_{j^l} \right)}.
    \]
    These form a nested sequence of compact, connected sets: $L_{j^{k+1}\cdot \overline{j}} \subset L_{j^k\cdot \overline{j}}$ for all $k \ge 1$.
    
     The diameter of the limb segment is bounded by the sum of the diameters of its components.
    \[
        \operatorname{diam}(L_{j^k\cdot \overline{j}}) \le \sum_{l=k}^{\infty} \left( \operatorname{diam}(\overline{U}_{j^l}) + \operatorname{diam}(S_{j^l}) \right).
    \]
    From Lemma \ref{lemma:diam_drops_segments}, both terms geometrically converge to zero. Therefore, the tail of the series converges to zero, implying:
    \[
        \lim_{k \to \infty} \operatorname{diam}(L_{j^k\cdot \overline{j}}) = 0.
    \]
    By Cantor's Intersection Theorem, the intersection of this nested sequence of non-empty compact sets is a single point $\{\zeta_j\}$.
    
    Since $B$ maps the limb $L_{j^{k+1}\cdot \overline{j}}$ onto $L_{j^k\cdot \overline{j}}$, the intersection point must be fixed under $B$, i.e., $B(\zeta_j) = \zeta_j$. 
    Finally, since the entire structure lies in $\mathbb{C} \setminus \overline{\mathbb{D}}$, the fixed point $\zeta_j$ is in $\mathbb{C} \setminus \overline{\mathbb{D}}$.
\end{proof}

With these modified definitions ensuring that the limbs are shrinking under $B^{-n}$, the remainder of the construction and analysis of the limbs proceeds in the exact same way as in the super-attracting homeomorphism case.

\vspace{0.5cm}
\noindent\textbf{Puzzle Pieces Construction.} 
We keep the established notation for B\"ottcher rays and the sector $W$. The construction of the initial puzzle piece $P_0$ follows the logic of the Attracting Homeomorphism case, utilizing the attracting nature of the cycle $\{\xi_0, \xi_1\}$.

Let $\phi$ denote the linearized K\oe nigs coordinates near $\xi_0$. Due to the symmetry of $B$ under the involution $\mathcal{I}(z)$, we can find an $\epsilon > 0$ such that the domain $D_0 = \phi^{-1}(D(0, \epsilon)) \subset \mathcal{A}^*(\xi_0)$ satisfies the condition that both critical points lie on its boundary ($c_+, c_- \in \partial D_0$). Note that $D_0$ is disjoint from $\overline{U}$.

Let $D_2$ be the connected component of $B^{-2}(D_0)$ satisfying the inclusion $\overline{D}_0 \subset \overline{D}_2 \subset \mathcal{A}^*(\xi_0)$. Crucially, $\overline{D}_2$ does not intersect the open drops $U_{\underline{0}d}$ and $U_{\underline{d}d}$.

We define the puzzle piece $P_0$ by removing this trapping region from the base piece $P$ (defined in Section \ref{def:base_puzzle_P}):
\[
    P_0 \colon = P \setminus \overline{D}_2.
\]

\begin{figure}
\begin{center}
	\includegraphics[width=1\textwidth]{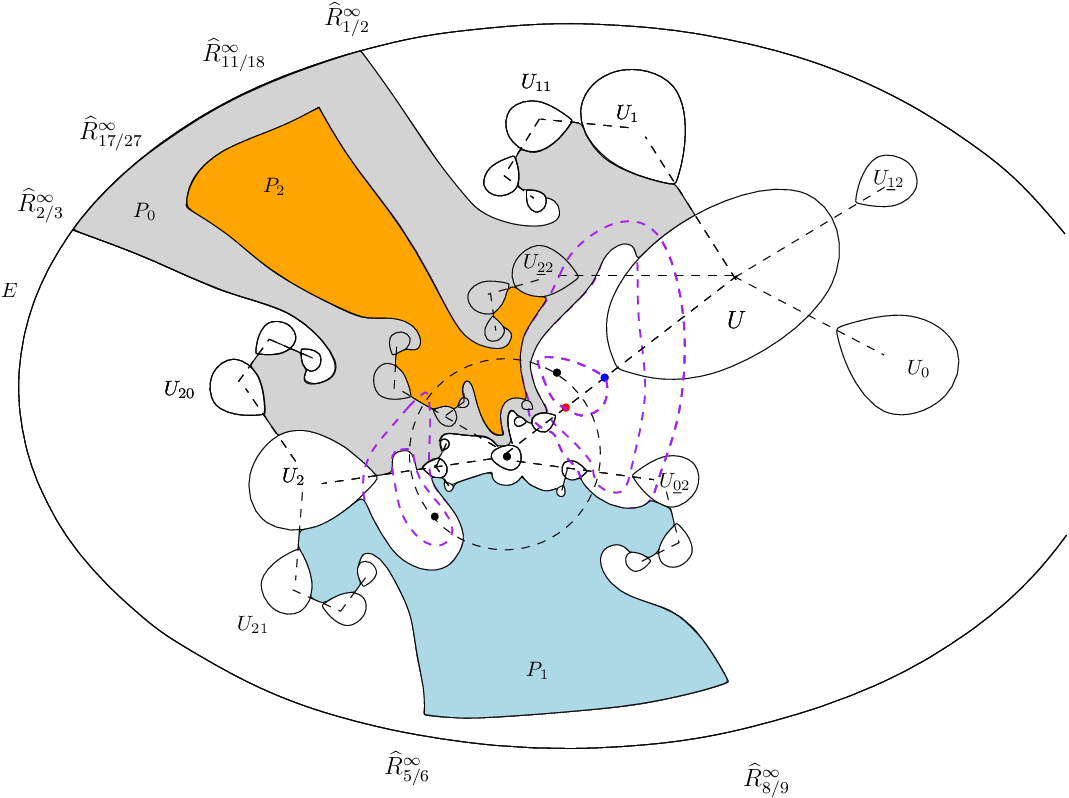}
	\caption{Diffeomorphism scenario for $d=2$: A sketch of the regions $P_0$ (in gray), $P_1$ (in blue), and $P_2$ (in orange). The figure also depicts the attracting cycle $\{\xi_0,\xi_1\}$ (in black) and the boundaries of the removed disks (in purple). The critical points $c_+$ and $c_-$ are colored in blue and red, respectively.}
	\label{fig:Difeo}
\end{center}
\end{figure}

\subsection{General Rotation Number}
\label{subsec:Generalizations}
We have established the 
puzzle pieces construction for 
the period 2 cycle with rotation number $\rho(X) = 1/2$. Specifically, for the case $d=2$, analyzing the map $m_3$ required constructing an interval bounded by preimages of $0$ and $1/2$ that compactly covers itself under $q=2$ iterations and isolates a single point of the cycle $\{5/8, 7/8\}$.

For the map $m_{d+1}$ and by Proposition~\ref{prop:boundary_dynamics_and_fixed_points}, we now generalize this construction for any given rational rotation number.  Our objective is to prove the existence of an interval with the following properties for any given $\rho = p/q$:
\begin{itemize}
    \item Its endpoints are preimages of two consecutive fixed points.
    \item It compactly covers itself under $q$ iterations.
    \item It contains exactly one point of the cycle with rotation number $p/q$.
\end{itemize}

We rely on Theorem \ref{thm:Goldberg}, which guarantees that for every irreducible fraction $0 < p/q < 1$, there exists a cycle $X = \{t_1, \dots, t_q\}$ under $m_{d+1}(t) = (d+1)t \pmod{\mathbb{Z}}$ with rotation number $\varrho(X) = p/q$ located within the interval:
\[
    X \subset \left[\frac{d}{d+1}, 1\right] \equiv \left[\frac{d}{d+1}, 0\right] \pmod{\mathbb{Z}}.
\]

\begin{proposition}
\label{prop:gen_interval}
Fix $d\geq 1$. Let $0<p/q<1$ be a fraction in lowest terms and let $X=\{t_1,\ldots,t_q\}\subset [(d-1)/d, 1)$ be a $q$-cycle under $m_{d+1}$ with rotation number $\rho(X)=p/q$. Then, there exist $a,b\in [(d-1)/d, 1)$ such that
\begin{enumerate}
    \item $m_{d+1}^q(a)=(d-1)/d$, $m_{d+1}^{q+1}(b)=0\equiv 1 \pmod{\mathbb{Z}}$,
    \item $t_1\in (a,b)$, $t_j\notin (a,b)$ for $j\neq 1$, and
    \item $[a,b]\subset m_{d+1}^q([a,b])$.
\end{enumerate}
\end{proposition}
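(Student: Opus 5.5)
The plan is to build $[a,b]$ as the image of a fixed arc $K\subset[(d-1)/d,1]$ under the inverse branch of $m_{d+1}^q$ that follows the orbit of one chosen point of $X$. Items 1–3 should then follow from injectivity and contraction of that branch.

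\textbf{Step 1: the inverse branch.} Index the cycle so that $m_{d+1}(t_i)=t_{i+1}$ (indices mod $q$). The point $t_1$ will be fixed at the end of Step 2. For each $i$, let $g_i$ be the inverse branch of $m_{d+1}$ with $g_i(t_{i+1})=t_i$. It is defined on the circle slit at a suitable point, is monotone, and multiplies lengths by $1/(d+1)$. Set
\[
\psi=g_1\circ g_2\circ\cdots\circ g_q ,
\]
defined on any arc $K$ on which all the intermediate compositions stay inside the domains of the $g_i$. Then $\psi(t_1)=t_1$, and $m_{d+1}^q\circ\psi=\mathrm{id}$ on $K$. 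Moreover $\psi$ is a strict contraction by the factor $(d+1)^{-q}$, so $t_1$ is its unique fixed point.

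\textbf{Step 2: the arc $K$.} Take $K=[(d-1)/d,\,c]$, where $c\in[(d-1)/d,1]$ is a preimage of $0$ under $m_{d+1}$, chosen so that $X\subset K$ and $K$ is mapped into itself by $\psi$. Then define
\[
a=\psi\big((d-1)/d\big),\qquad b=\psi(c).
\]
This gives $m_{d+1}^q(a)=(d-1)/d$ and $m_{d+1}^{q}(b)=c$, hence $m_{d+1}^{q+1}(b)=0$, which is item 1. Once $\psi(K)\subset K$ holds, we have $[a,b]=\psi(K)\subset K=m_{d+1}^q([a,b])$, which is item 3.

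To choose $c$ and $t_1$, I will use the structure of the rotation cycle. Since $m_{d+1}$ preserves the circular order on $X$ and $X$ lies in a single fixed-point gap (Theorem~\ref{thm:Goldberg}), the gaps of $X$ are permuted like those of a rigid rotation by $p/q$. The one long gap is the one containing the fixed points and the preimage endpoints. I will take $t_1$ to be the point of $X$ whose image arc under the appropriate branch lands adjacent to $(d-1)/d$, that is, the first point of $X$ after $(d-1)/d$ in the circular order. I will take $c$ to be the preimage of $0$ lying just beyond the last point of $X$.

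\textbf{Step 3: the unique-cycle-point property (item 2).} Suppose $t_j\in[a,b]=\psi(K)$. Then $m_{d+1}^q(t_j)=t_j\in K$. Since $m_{d+1}^q$ is injective on $\psi(K)$ with inverse $\psi$, this gives $\psi(t_j)=t_j$. As $\psi$ has the unique fixed point $t_1$, we get $t_j=t_1$. The point $t_1$ lies in the open interval because $\psi$ is an orientation-preserving contraction fixing $t_1$, and $t_1$ is interior to $K$.

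\textbf{Main obstacle.} The hard step is Step 2: verifying that the composed branch $\psi$ is well defined on all of $K$ (each intermediate image $g_{i}\circ\cdots\circ g_q(K)$ must avoid the slit of the next branch) and that $\psi(K)\subset K$. My first attempt will be an induction along the orbit. Each intermediate image $g_i\circ\cdots\circ g_q(K)$ is an arc of length at most $(d+1)^{-(q-i+1)}$ containing $t_i$. I will show it stays inside the gap of $X$ that is adjacent to $t_i$ on the side dictated by the circular order. This uses the order-preserving property of $m_{d+1}$ on $X$ together with the deployment condition that $X$ sits in one fixed-point gap. If that bookkeeping gets messy, the fallback is to use the monotone degree-one extension of $m_{d+1}|_X$ mentioned in the definition of rotation sets. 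In its lift, all these arcs are ordered, and containment reduces to comparing lifts of the points $(d-1)/d$ and $c$.
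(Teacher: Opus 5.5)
Your overall scheme is sound. Once $\psi=g_1\circ\cdots\circ g_q$ is defined on $K$ with $\psi(K)\subset K$, items 1 and 3 follow as you say, and your Step~3 is correct: if $t_j=\psi(y)$ with $y\in K$, then $y=m_{d+1}^q(t_j)=t_j$, so $t_j$ is fixed by the contraction $\psi$.

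\textbf{The gap.} That hypothesis, which you flag yourself as the main obstacle, is never established. The induction you propose for it rests on a false claim. The arc $g_i\circ\cdots\circ g_q(K)$ contains $t_i$ in its interior, so it cannot lie in a gap of $X$. Indeed, already $g_q(K)$ is a full branch interval containing every point of $X$ on the same side of $d/(d+1)$ as $t_q$. The fallback via the monotone extension of $m_{d+1}|_X$ does not address this either, since $\psi$ is built from branches of $m_{d+1}$, not of that extension. The circular-order and rotation-number bookkeeping is a detour. Only $q\ge 2$ and the hypothesis $X\subset[(d-1)/d,1)$ are needed; that hypothesis is assumed in the statement, not supplied by Theorem~\ref{thm:Goldberg}.

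\textbf{The missing idea} is the Markov structure of $m_{d+1}$ on $I=[(d-1)/d,1)$, which is what the paper's proof rests on. Slit the circle at $0$. Since $t_i\ge (d-1)/d\ge (d-1)/(d+1)$, every $g_i$ is one of the two top branches
\[
h_0(y)=\frac{y+d-1}{d+1},\qquad h_1(y)=\frac{y+d}{d+1}.
\]
These map $\overline I=[(d-1)/d,1]$ monotonically onto $\overline{J_0}=[(d-1)/d,\,d/(d+1)]$ and onto $\overline{J_1}=[a',1]$, where $a'=\frac{d^2+d-1}{d(d+1)}$. Both images lie inside $\overline I$.

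Your choice of $c$ is $1\equiv 0$, because the largest point of $X$ lies in $J_1$, above every other preimage of $0$ in $\overline I$. So $K=\overline I$. Writing $g_i=h_{s_i}$, the map $\psi$ is defined on $K$ and $\psi(K)\subset h_{s_1}(\overline I)\subset K$, with no induction needed.

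\textbf{Comparison with the paper.} Your interval $[a,b]=\psi(K)$ is the closure of the itinerary cylinder $J_{s_1\cdots s_q}$. The paper instead takes the smaller cylinder $J_{s_1\cdots s_q s_1}$, with $t_1=\min X\in J_0$. Your scheme also produces that interval if you take $K=\overline{J_0}$, i.e.\ $c=d/(d+1)$. The requirement $X\subset K$ is unnecessary, since Step~3 only uses $t_j\in\psi(K)$. With this step filled in, your fixed-point argument for item~2 replaces the paper's appeal to the conjugacy of $(\Lambda,m)$ with the full $2$-shift. It is slightly more self-contained.
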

\begin{proof}
Fix $d\geq 1$ and set $I:=[(d-1)/d,1)\subset \mathbb{R}$. For simplicity, consider the action of $m_{d+1}$ on the  interval $[0,1)$, that is, $m_{d+1}:[0,1)\to [0,1)$ given by $m_{d+1}(t)=(d+1)t-\lfloor (d+1)t \rfloor$. In particular, the restriction $m|_I:I\to [0,1)$ is given by
\[
m(t) = \begin{cases}
    (d+1)t-(d-1) & \text{if } \frac{d-1}{d}\leq t< \frac{d}{d+1}, \\
    (d+1)t-d & \text{if } \frac{d}{d+1}\leq t< 1.\end{cases}
\]
Each endpoint of $I$ has exactly two preimages under $m$: the endpoint itself and a second preimage lying in the interior of $I$. The non-fixed preimages are given by
\[a'=\frac{d^2+d-1}{d(d+1)}\longmapsto m(a')=\frac{d-1}{d}\qquad\text{and}\qquad b'=\frac{d}{d+1}\longmapsto m(b')=0 \equiv 1\mod \mathbb{Z},\]
and where $(d-1)/d<b'\leq a'<1$. These values generate a partition of $I$ into three subintervals
\[J_0=[(d-1)/d, b'),\qquad J_*=[b',a'),\qquad J_1=[a',1)\]
so that $I=J_0\sqcup J_*\sqcup J_1$. Both $J_0$ and $J_1$ are mapped under $m$ to $I$ while $m(J_*)=[0,(d-1)/d)$. See Figure~\ref{fig:m3}.

\begin{figure}
\centering
     \includegraphics[width=0.7\linewidth]{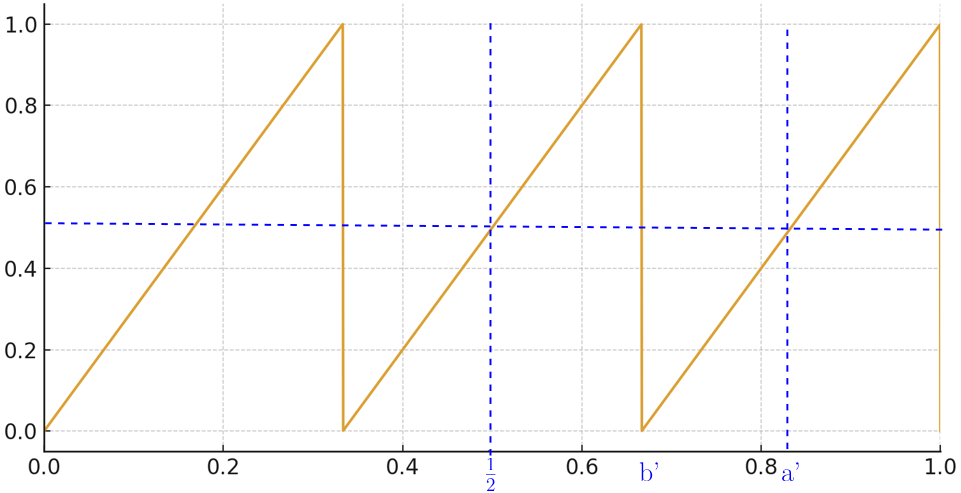}
    \caption{Plot of $m_{d+1}$ for the case $d=2$. The interval $I=[1/2,1)$ is partitioned into three subintervals, with  $J_0=[1/2,b']$ and $J_1=[a',1]$ mapping into $I$, while $J_*=[b',a')$ maps to $[0,1/2)$.}
    \label{fig:m3}
\end{figure}

Since $m|_I$ is an expansive endomorphism, it follows from standard arguments in the theory of interval maps (see for example \cite{Brin2002}), the existence of a positively invariant set,
\[\Lambda\subset J_0\sqcup J_1\subset I,\qquad\text{where}\qquad\Lambda =\{t\in I~:~ m^k(t)\in J_0\sqcup J_1, \forall k\in \mathbb{N}_0\}.\]

Furthermore, there exists a topological conjugacy between the dynamical systems $(\Lambda, m)$ and $(\Sigma_2,\sigma)$, where $\Sigma_2=\{0,1\}^\mathbb{N}$ and $\sigma:\Sigma_2\to \Sigma_2$ is the right-shift mapping. The conjugacy is defined by the homeomorphism (also known as the \emph{itinerary map})
\[S:\Lambda\to \Sigma_2,\qquad S(t)=s_1s_2s_3\ldots,\]
if and only if $m^k(t)\in J_{s_k}$ for all $k\geq 1$.

We now proceed to show the existence of an interval $[a,b]\subset I$ with the required properties. Clearly, $X\subset \Lambda$, so each point in $X$ has a well-defined itinerary. The hypothesis on $p/q$ implies that $q\geq 2$, and hence, $X\cap J_0\neq \emptyset$ and $X\cap J_1\neq \emptyset$, for otherwise, if the $q$-cycle is contained in a single interval $J_s$ for some  $s\in \{0,1\}$, then every point in $X$ has the same itinerary $0000\ldots$ or $1111\ldots$, both of them fixed points of $\sigma$. The conjugacy implies that $X$ reduces to a single fixed point, a contradiction.

Relabeling the points in $X$, let $t_1$ be the smallest value in $X\cap J_0$, so that
\[\frac{d-1}{d}<t_1<t_j<1\]
for all $j=2,\ldots,q$. Let $S(t_1)=\overline{s_1 s_2\ldots s_q}\in \Sigma_2$ be the $q$-periodic itinerary of $t_1$ and observe that $s_1=0$. Define the set

\begin{equation}
\begin{aligned}
J_{s_1\ldots s_q s_1} &:= \{x\in I~:~x\in J_{s_1}, m(x)\in J_{s_2},\ldots,m^{q-1}(x)\in J_{s_q}, m^q(x)\in J_{s_1}\} \\
&= J_{s_1} \cap m^{-1}(J_{s_2})\ldots m^{-q+1}(J_{s_1})\cap m^{-q}(J_{s_1}).
\end{aligned}
\end{equation}
Since $m|_{J_{s_k}}$ is monotone increasing, then $J_{s_1\ldots s_q s_1}$ is a single, non-trivial subinterval of $J_{s_1}$ that contains $t_1$ in its interior. Furtheremore
\[m(J_{s_1\ldots s_q s_1})= J_{s_2}\cap \ldots \cap m^{-q+1}(J_{s_1})=J_{s_2\ldots s_q s_1}\]
since $m(J_{s_1})=I$. Therefore $m^q(J_{s_1\ldots s_q s_1})=J_{s_1}=J_0$. Set $[a,b]=J_{s_1\ldots s_q s_1}$. Then, 

\begin{enumerate}
    \item $m$ restricted to $[a,b]$ is monotone increasing, thus the endpoints of $[a,b]$ are mapped to the endpoints of $J_0$, that is,
    \[m^q(a)=(d-1)/d,\qquad \text{and}\qquad m^q(b)=b',\]
    and therefore $m^{q+1}(b)=0 \equiv 1 \mod \mathbb{Z}$.
    \item If $t_j\in X\cap [a,b]$, then its itinerary $S(t_j)$ starts with the word $s_1\ldots s_q$. By the topological conjugacy, this happens if and only if $j=1$. Thus, $X\cap [a,b]=\{t_1\}$.
    \item Since $[a,b]\subset J_{s_1}=J_0$ and $m^q([a,b])=J_{s_1}$ then $[a,b]\subset m^q([a,b])$.
\end{enumerate}    
\end{proof}

\subsection{Proof of Theorem A}

We now prove Theorem~\ref{thm:biaccessibility}. While the construction presented in this section focused on $\rho=1/2$, the line of reasoning holds for a general rotation number $\rho=p/q$.

\begin{proof}[Proof of Theorem~\ref{thm:biaccessibility}]
    Let $\rho = p/q$. By Proposition~\ref{prop:gen_interval}, there exists an interval $I$ on the circle containing exactly one point $t_1$ of the cycle for $m_{d+1}$ such that $I$ covers itself under $m_{d+1}^q$.
    
    We map the interval structure to the dynamical plane of $B$. We define the region $W$ bounded by the extended Böttcher rays corresponding to the endpoints of $I$. We define the base puzzle piece $P_0 \subset W$ by removing, if needed, small disks around the cycle (if super-attracting) or linearization disks (if attracting).
    
   By Theorem \ref{thm:SteinmetzInverse}, and considering that $P_0$ is a topological disk on the Riemann sphere with no critical points, there exists an analytical inverse of $B^2|_{P_2}\colon P_2\to P_0$, denoted as $\varphi$.  Given that $\overline{\varphi\left(P_0\right)}=\overline{P}_2\subset P_0$, then,  following Theorem \ref{thm:Contraction}, there exists a unique fixed point for $\varphi$ in $P_2$, denoted as $z^*$. Moreover, $z^*$ is also a unique fixed point for $\varphi$ in $P_2$:
\[
	B^2\left(z^*\right)=B^2\left(B^{-2}(z^*)\right) = z^*.
\]

Note that the map $B^2$, when restricted to $P_0\cap \partial\mathcal{A}^*(\infty)$, has a fixed point at $\zeta_{t_1}$, which is the landing point of the ray $\widehat{R}^\infty_{t_1}$ with 
\[
t_1=\frac{d^2+d-1}{d(d+1)}.
\]
Similarly, the restriction of $B^2$ to $P_0\cap\partial\mathcal{A}^*(0)$ has a fixed point at $\zeta_{s_2}$, the landing point of the ray $\widehat{R}^0_{s_2}$, where $s_2\in P_2$. By the uniqueness of $z^*$, it follows that
\[
\zeta_{t_1}=z^*=\zeta_{s_2}.
\]
Therefore, $\zeta_{t_1}$ is a bi-accessible point, and so is $\zeta_{t_2}=B(\zeta_{t_1})$.

Finally, we have obtained a bi-accessible cycle $\{ \zeta_{t_2},\zeta_{t_2}\} $ with rotation number $\varrho=1/2$, as desired.
\end{proof}

\section{Connectedness of the Julia Set}
\label{sec:connectedness}

In this section, we employ the theoretical framework and constructive results developed previously to analyze the connectivity of the Julia set for the generalized Blaschke family. We consider the family of rational maps $B_a \colon \widehat{\mathbb{C}} \to \widehat{\mathbb{C}}$ defined by:
\begin{equation*}
    B_a(z) = z^{d+1} \left( \frac{z-a}{1-\bar{a}z} \right)^d,
\end{equation*}
where $d \ge 1$ is a fixed integer and $a \in \mathbb{C}$.

We utilize the parametrization $a = r e^{2\pi i \alpha}$ with $r \in (0, \infty)$ and $\alpha\in(-\frac{1}{4d},\frac{1}{4d}]$, denoting the map as $B_{r,\alpha}$. As noted in prior discussions regarding Herman rings, the Fatou set of this family may contain doubly connected components for parameters $r=|a| > 2d+1$.  Since the existence of such components prevents the Julia set from being connected, our main result establishes that $\mathcal{J}(B_{r,\alpha})$ is connected if and only if $\mathcal{F}(B_{r,\alpha})$ contains no Herman rings.

The connectedness of the Julia set is intimately linked to the topology of the Fatou components. Recall that the Julia set of a rational map is connected if and only if every Fatou component is simply connected. Given the non-existence of Siegel disks (Theorem~\ref{thm:rotation-domains}) and the established simple connectivity of the basins of $0$ and $\infty$ (Proposition~\ref{thm:super-attracting-basins}), our investigation focuses on the connectedness of periodic attracting or parabolic basins that may arise within the Arnold tongues.

We rely on Theorem~\ref{thm:biaccessibility}, which guarantees the existence of a bi-accessible cycle for adjacent parameters within the tongues. This topological feature acts as an obstacle to the existence of multiply connected basins, thereby ensuring the connectedness of the Julia set.

\subsection{Connectedness Results}

In this subsection, we consider parameters $(r,\alpha)$ with $r \in (1, 2d+1)$, for which both free critical points of $B_{r,\alpha}$ lie in the same Fatou component.

The following lemma establishes that if a point on the unit circle is accessible from both the basin of zero and the basin of infinity, it effectively bridges the dynamical plane.

\begin{lemma}
\label{lemma:path-connected-K}
    Consider $B_{r,\alpha}$ as a generalized Blaschke product, where $r \in (1, 2d+1)$, and both free critical points associated with $B_{r,\alpha}$ reside in the same Fatou component. Suppose there exists a bi-accessible point $\omega_0$ within the Julia set $\mathcal{J}(B_{r,\alpha})$. That is, 
    \[
    \omega_0\in\partial \mathcal{A}^*(0)\cap \partial\mathcal{A}^*(\infty).
    \] 
    Then, the set 
    \[
    K \colon = \overline{\mathcal{A}^*(0)} \cup \overline{\mathcal{A}^*(\infty)} \cup B_{r,\alpha}^{-1}\left( \overline{\mathcal{A}^*(0)} \cup \overline{\mathcal{A}^*(\infty)}\right)
    \] 
    is path connected.
\end{lemma}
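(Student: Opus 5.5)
The plan is to show path connectivity piece by piece, building paths through the bi-accessible point $\omega_0$ and its preimages. First I would show that $K_0:=\overline{\mathcal{A}^*(0)}\cup\overline{\mathcal{A}^*(\infty)}$ is path connected. By Theorem~\ref{thm:super-attracting-basins}, both immediate basins are simply connected and contain no free critical point. For parameters in this range the map is hyperbolic on these basins (both free critical points lie in one Fatou component, which is distinct from the basins of $0$ and $\infty$), so, as in the proof of Proposition~\ref{prop:boundary_dynamics_and_fixed_points}, the Böttcher coordinates extend to homeomorphisms of the closures onto $\overline{\mathbb{D}}$. In particular each closure is path connected: any boundary point is joined to $\infty$ (resp.\ $0$) by a landing Böttcher ray. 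By hypothesis $\omega_0$ lies in both closures, since it is the landing point of a ray from each basin. Concatenating the ray from $\infty$ to $\omega_0$ with the ray from $\omega_0$ to $0$ therefore gives a path joining the two closures, and $K_0$ is path connected.

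Next I would handle the preimage $B^{-1}(K_0)$. Each component $V$ of $B^{-1}(\mathcal{A}^*(0))$ or $B^{-1}(\mathcal{A}^*(\infty))$ maps properly onto the corresponding basin. Since $V$ avoids the free critical points, $V$ is simply connected by the Riemann--Hurwitz count in Theorem~\ref{thm:super-attracting-basins}. Its closure is the image of a continuous extension of an inverse composition of the Böttcher parametrization, and hence is path connected. The key point is to attach every such $\overline{V}$ to $K_0$.

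For this I would use the preimages of $\omega_0$, together with the symmetry $\mathcal{I}$. The components of $B^{-1}(\mathcal{A}^*(\infty))$ other than $\mathcal{A}^*(\infty)$ itself contain the pole $1/\bar{a}$ or preimages of $\infty$ lying in $\mathbb{D}$, and symmetrically for $0$. A non-immediate component $V\subset\mathbb{D}$ of $B^{-1}(\mathcal{A}^*(\infty))$ is mapped onto $\mathcal{A}^*(\infty)$, so its closure contains a point $\omega'\in B^{-1}(\omega_0)$. I then have to show that $\omega'$ also lies in the closure of some component of $B^{-1}(\mathcal{A}^*(0))$ that meets $\overline{\mathcal{A}^*(0)}$, or lies directly in $\overline{\mathcal{A}^*(0)}$. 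Since $\omega_0$ is bi-accessible, lifting the two rays at $\omega_0$ along the local branch of $B^{-1}$ at $\omega'$ gives two arcs landing at $\omega'$, one in $V$ and one in a component $V'$ of $B^{-1}(\mathcal{A}^*(0))$. The lift is single-valued provided $\omega'$ is not critical; if $\omega'$ happens to be critical, the lifting still gives landing arcs. This makes $\overline{V}\cup\overline{V'}$ path connected. I would then argue that $V'$ is either $\mathcal{A}^*(0)$ or is linked in the same way to $\mathcal{A}^*(\infty)$. One natural route is to use the drop $U$ and its mirror image, whose boundaries map onto $\mathbb{S}^1$ and so meet $\partial\mathcal{A}^*(0)$.

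The main obstacle is exactly this chaining step: showing that the finitely many preimage components form a connected incidence graph. The tool I would use is that $B^{-1}(K_0)\supset K_0$ and that $B$ maps each connected component of $B^{-1}(K_0)$ onto the connected set $K_0$. If $B^{-1}(K_0)$ had a component $C$ disjoint from $K_0$, then $C$ would be a closed set mapped onto $K_0$ and would contain full preimages of $0$ or $\infty$ in the region bounded away from $K_0$. I would derive a contradiction by a degree count: the $d+1$ sheets over $\mathcal{A}^*(\infty)$ near $\infty$, the $d$ sheets over the drop (containing $1/\bar{a}$), and the corresponding mirror sheets must be glued along the lifts of the ray-pair at $\omega_0$. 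Making this count rigorous, with the two free critical points confined to a single component outside $K$, is where I expect most of the work.
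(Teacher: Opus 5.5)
\textbf{There is a genuine gap at the chaining step.} Your first step and your lifting step match the paper's proof: Jordan boundaries, a path $\infty\to\omega_0\to 0$, and lifts of that path into preimage components. But you leave the chaining step open, and the degree count you sketch cannot close it.

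Since the only preimages of $0$ and $\infty$ are $0,a$ and $\infty,1/\bar a$, one has $B^{-1}(\mathcal{A}^*(0))=\mathcal{A}^*(0)\sqcup V_a$ and $B^{-1}(\mathcal{A}^*(\infty))=\mathcal{A}^*(\infty)\sqcup\mathcal{I}(V_a)$. Here $V_a\ni a$ lies in the drop $U$, and $\mathcal{I}(V_a)\subset\mathcal{I}(U)\subset\mathbb{D}$. Recall that $\overline{U}\cap\mathbb{S}^1=[c_-,c_+]$. Near the open arc $(c_-,c_+)$ the outside lies in $U$ and the inside lies in $\mathcal{I}(U)$. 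So Julia points on that arc belong to neither $\partial\mathcal{A}^*(0)$ nor $\partial\mathcal{A}^*(\infty)$.

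Suppose your point $\omega'\in B^{-1}(\omega_0)\cap\partial\mathcal{I}(V_a)$ lies on this arc. Then the lifted zero-side arc lands at $\omega'$ from inside $V_a$, not from $\mathcal{A}^*(0)$. All you learn is that $\overline{V_a}\cup\overline{\mathcal{I}(V_a)}$ is connected. Degree bookkeeping does not exclude this. A component $\overline{V_a}\cup\overline{\mathcal{I}(V_a)}$ of $B^{-1}(K_0)$ that is disjoint from $K_0$ and covers it with degree $d$ is consistent with degree $d+1$ on $K_0$ and $2d+1$ overall. So the obstruction has to be geometric, not numerical.

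\textbf{The missing idea} is to use a preimage of $\omega_0$ on the part of $\partial U$ off the circle. Let $\omega'\in B^{-1}(\omega_0)\cap(\partial U\setminus\mathbb{S}^1)$, so $|\omega'|>1$.
\begin{itemize}
\item Since $B$ is open, $B^{-1}(\overline{\mathcal{A}^*(0)})=\overline{\mathcal{A}^*(0)}\cup\overline{V_a}$. Because $\overline{\mathcal{A}^*(0)}\subset\overline{\mathbb{D}}$, this forces $\omega'\in\overline{V_a}$.
\item Likewise $B^{-1}(\overline{\mathcal{A}^*(\infty)})=\overline{\mathcal{A}^*(\infty)}\cup\overline{\mathcal{I}(V_a)}$, with the second set in $\overline{\mathbb{D}}$. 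This forces $\omega'\in\overline{\mathcal{A}^*(\infty)}$.
\end{itemize}
So $\overline{V_a}$ meets $K_0$ directly, and by the symmetry $\mathcal{I}$ so does $\overline{\mathcal{I}(V_a)}$. Path connectivity of $K$ then follows with no incidence-graph argument.

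For $d\ge 2$ such an $\omega'$ exists. The map $B|_{\partial U}\colon\partial U\to\mathbb{S}^1$ is a $d$-fold covering, so $\partial U$ contains $d$ preimages of $\omega_0$. Since $B$ is monotone on $[c_-,c_+]$, at most one of them lies on the circle.

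For $d=1$ the single preimage may a priori lie on $(c_-,c_+)$, so an extra argument is required. For instance, it would suffice to show that some bi-accessible point lies outside the arc $B([c_-,c_+])$; every point of the forward orbit of $\omega_0$ is bi-accessible.

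The paper's own proof passes over this point, simply asserting that concatenation connects every point of $K$ to $\infty$. So this is the ingredient you would need to supply either way.
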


\begin{proof}
    Since the map $B_{r,\alpha}$ is hyperbolic and $\mathcal{A}^*\left(\infty \right)$ is simply connected, the boundary $\partial\mathcal{A}^*\left(\infty \right)$ is a Jordan curve. Consequently, for each $z\in\partial\mathcal{A}^*\left(\infty \right)$, we can construct a path, denoted as $\gamma_z^\infty$, within $\overline{\mathcal{A}^*\left(\infty\right)}$ that connects $z$ to $\infty$. In particular, there exists a curve $\gamma_{\omega_0}^\infty $ connecting the bi-accessible point $\omega_0$ to $\infty $ in $\overline{\mathcal{A}^*\left(\infty \right)}$.

    By the symmetry of the map $\mathcal{I}(z)=1/\bar{z}$, the basin $\mathcal{A}^*(0)$ is also simply connected. Thus, for any $z\in\partial\mathcal{A}^*(0)$, there are curves within $\overline{\mathcal{A}^*\left(0\right)}$, denoted as $\gamma_z^0$, that connect $z$ to $0$. For the specific point $\omega_0$, which lies on the common boundary, the union of the curve from $\infty$ to $\omega_0$ and from $\omega_0$ to $0$ forms a path connecting $\infty$ to $0$ passing through the Julia set only at the point $\omega_0$.

    Consider now a component $V$ of $B_{r,\alpha}^{-1}\left(\mathcal{A}^*\left(\infty \right)\right)$. Since $B_{r,\alpha}$ maps $V$ onto $\mathcal{A}^*(\infty)$, any curve $\gamma_\infty$ connecting $\infty$ to $\omega_0$ gives a curve in $V$ connecting a preimage of $\infty$ (which is $\infty$ or the pole) to a preimage of $\omega_0$. Through the concatenation of these curves and the specific path through $\omega_0$, any point in the set $K$ can be connected to $\infty$. Therefore, $K$ is path connected.
\end{proof}

\begin{theorem}
\label{thm:A-atractora-simplemente-conexa}
    Consider $B_{r,\alpha}$ as a generalized Blaschke product, where $r \in (1, 2d+1)$, and both free critical points associated with $B_{r,\alpha}$ reside in the same Fatou component. Assume that the Julia set $\mathcal{J}(B_{r,\alpha})$ contains a bi-accessible point $\omega_0$. If $z_0 \in \mathbb{S}^1$ is an attracting periodic point, then its immediate basin of attraction $\mathcal{A}^*(z_0)$ is simply connected. Consequently, every component of the basin of attraction $\mathcal{A}(z_0)$ is simply connected.
\end{theorem}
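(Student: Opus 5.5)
We argue by contradiction. Suppose $\mathcal{A}^*(z_0)$ is not simply connected. The aim is to find a Jordan curve inside it that separates $\mathcal{A}^*(0)$ from $\mathcal{A}^*(\infty)$, and to show that this is incompatible with the path through the bi-accessible point $\omega_0$ given by Lemma~\ref{lemma:path-connected-K}.

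\emph{Step 1: reduction to a fixed basin and symmetry.} Replacing $B_{r,\alpha}$ by its $q$-th iterate, we may assume $z_0$ is fixed. Since $z_0\in\mathbb{S}^1$ and $\mathcal{I}(z_0)=z_0$, the argument of Theorem~\ref{thm:general-components}(1) gives $\mathcal{I}(\mathcal{A}^*(z_0))=\mathcal{A}^*(z_0)$. Hence $\mathcal{A}^*(z_0)$ meets $\mathbb{S}^1$ in an open arc, or in a union of open arcs. By hypothesis both free critical points $c_\pm\in\mathbb{S}^1$ lie in this component. No other critical points lie there, because the remaining critical points belong to the basins of $0$ and $\infty$.

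\emph{Step 2: where multiple connectivity comes from.} Take the Kœnigs linearizing domain $U_0\ni z_0$ and its successive pullbacks $U_{n+1}$, the components of $B^{-1}(U_n)$ containing $z_0$. These exhaust $\mathcal{A}^*(z_0)$. Apply Riemann--Hurwitz at each step, using that $U_n$ contains at most the two critical points $c_\pm$ and that $B|_{U_{n+1}}$ is proper. Connectivity can only increase at the first stage $n$ where $U_n$ contains $c_+$ and $c_-$ but the critical values $v_\pm$ lie in different complementary pieces; this is the mechanism of \cite[Prop.~3.1]{Canela2020}. At that stage $U_{n+1}$ contains a Jordan curve $\gamma$ that is not null-homotopic in $\mathcal{A}^*(z_0)$. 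We choose $\gamma$ symmetric under $\mathcal{I}$: take a curve $\gamma'$ in $U_{n+1}\cap\{|z|\ge 1\}$ and set $\gamma=\gamma'\cup\mathcal{I}(\gamma')$. Then $\gamma$ crosses $\mathbb{S}^1$ at exactly two points of the arc(s) $\mathcal{A}^*(z_0)\cap\mathbb{S}^1$.

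\emph{Step 3: the separation contradiction.} Each complementary component of $\gamma$ must contain Julia points, because $\gamma$ is essential. The key claim is that $\gamma$ separates $0$ from $\infty$. If instead one complementary disk $\Omega$ of $\gamma$ contained neither $0$ nor $\infty$, then $\Omega$ would avoid the two superattracting basins. By the maximum principle, applied to the iterates on $\Omega$, which omit a neighborhood of $0$ and $\infty$, the family $\{B^n|_\Omega\}$ would be normal, so $\Omega$ would lie in the Fatou set. Then $\gamma$ would be null-homotopic in $\mathcal{A}^*(z_0)$, a contradiction. Moreover, $\mathcal{I}$-symmetry of $\gamma$ forces $0$ and $\infty$ into different components, since they are exchanged by $\mathcal{I}$ while the two complementary components of a symmetric curve crossing $\mathbb{S}^1$ twice are each preserved or swapped coherently. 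Now Lemma~\ref{lemma:path-connected-K} gives a path from $0$ to $\infty$ inside $\overline{\mathcal{A}^*(0)}\cup\overline{\mathcal{A}^*(\infty)}$. This path meets $\mathcal{J}$ only at $\omega_0$, so it is disjoint from $\mathcal{A}^*(z_0)\supset\gamma$. This contradicts the separation. Hence $\mathcal{A}^*(z_0)$ is simply connected.

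\emph{Step 4: preimage components.} A component $V$ of $\mathcal{A}(z_0)$ that is not periodic maps properly onto its image. If $V$ is not symmetric, then $V\cap\mathbb{S}^1=\emptyset$ and $V$ contains no free critical point, since $c_\pm\in\mathcal{A}^*(z_0)$. Then $B|_V$ is a covering of a simply connected domain, hence a homeomorphism, and $V$ is simply connected by induction on the preimage level. If $V$ is symmetric, it meets $\mathbb{S}^1$ and contains no critical point, and the same covering argument applies.

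The main obstacle is Step 3, specifically the claim that an essential curve must separate $0$ from $\infty$. If that normality argument proves too weak, the fallback is to work with $\mathcal{I}$-symmetry and the circle directly: a non-separating essential loop would bound a disk meeting $\mathbb{S}^1$ in an arc containing Julia points of the circle map. We would then contradict this using that such points are accumulated by rays from $\mathcal{A}^*(\infty)$.
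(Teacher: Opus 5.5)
\textbf{The gap: Step~3's key claim restates the theorem and is not proved.} Since $\omega_0\in\partial\mathcal{A}^*(0)\cap\partial\mathcal{A}^*(\infty)$, the set $E=\overline{\mathcal{A}^*(0)}\cup\overline{\mathcal{A}^*(\infty)}$ is connected and disjoint from $\mathcal{A}^*(z_0)$. So \emph{no} curve in $\mathcal{A}^*(z_0)$ separates $0$ from $\infty$. Your key claim therefore says there are no essential curves at all, i.e.\ it is the theorem itself.

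The argument you give for it is the polynomial maximum-principle argument, and it fails for $B$. Your $\Omega$ avoids $\mathcal{A}^*(0)$ and $\mathcal{A}^*(\infty)$, but not their preimages ($a$, $1/\bar a$, and deeper ones). So the iterates need not omit neighbourhoods of $0$ and $\infty$ on $\Omega$. In fact they cannot: you have just shown $\Omega\cap\mathcal{J}\neq\emptyset$. Since $\mathcal{J}=\partial\mathcal{A}(\infty)$ and $\partial\Omega=\gamma$ lies in the Fatou set, $\Omega$ contains a whole component of $\mathcal{A}(\infty)$, hence a pole of some $B^n$. And $\{B^n|_\Omega\}$ is never normal on a set meeting $\mathcal{J}$.

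The symmetry sentence is also backwards. If $\gamma=\mathcal{I}(\gamma)\neq\mathbb{S}^1$, some point of $\mathbb{S}^1$ lies in a complementary component, which is then $\mathcal{I}$-invariant. Hence both components are $\mathcal{I}$-invariant, and $0,\infty$ lie in the same one. So the symmetric curve of Step~2 never separates $0$ from $\infty$, and Steps~2 and~3 are incompatible. The fallback is not yet an argument: nothing forces the arc of $\mathbb{S}^1$ enclosed by $\gamma$ to contain Julia points, let alone points of $\partial\mathcal{A}^*(\infty)$.

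\textbf{What is missing} is a dynamical argument that handles the preimages of $0$ and $\infty$ that can sit inside $\Omega$. The paper supplies it by placing an iterated preimage of $\omega_0$ in the hole (density of backward orbits in $\mathcal{J}$) and pushing it forward into the set $K$ of Lemma~\ref{lemma:path-connected-K}, which contains $B^{-1}(E)$.

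With your $\Omega$ this can be done cleanly by a first-entry argument. This also avoids needing a pushed-forward point to stay ``inside'' the pushed-forward curve.
\begin{enumerate}
\item Each $B^n(\Omega)$ is open with $\partial B^n(\Omega)\subset B^n(\gamma)$, and $B^n(\gamma)$ lies in the Fatou cycle of $z_0$. Hence any connected set that misses this cycle and meets $B^n(\Omega)$ is contained in $B^n(\Omega)$.
\item Let $n\geq 1$ be minimal with $B^n(\Omega)\cap E\neq\emptyset$. It exists because $\Omega$ meets $\mathcal{J}\subset\partial\mathcal{A}(0)$.
\item By the first point, $E\subset B^n(\Omega)$, so $0\in B^n(\Omega)$. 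Since $0\notin B^{n-1}(\Omega)$ and $B^{-1}(0)=\{0,a\}$, we get $a\in B^{n-1}(\Omega)$.
\item But $a\in K$, and $K$ is connected and misses the Fatou cycle of $z_0$. Therefore $E\subset K\subset B^{n-1}(\Omega)$. This contradicts the minimality of $n$, or $\Omega\cap E=\emptyset$ if $n=1$.
\end{enumerate}
This makes the symmetrization and the Riemann--Hurwitz discussion of Step~2 unnecessary.
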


\begin{proof}
    We proceed by contradiction. Assume that $\mathcal{A}^*(z_0)$ is not simply connected. This implies that $\mathcal{A}^*(z_0)$ is at least doubly connected; therefore, there exists a simple closed curve $\gamma \subset \mathcal{A}^*(z_0)$ that is not null-homotopic in $\mathcal{A}^*(z_0)$. Let $D$ be the bounded component of $\widehat{\mathbb{C}} \setminus \gamma$. The fact that there is non-simple connectivity implies that $D$ must include points from the Julia set, which means $\mathcal{J}(B_{r,\alpha}) \cap D$ is not empty.

    Since the bi-accessible point $\omega_0$ lies in the Julia set, and the backward orbit of any point in the Julia set is dense in $\mathcal{J}(B_{r,\alpha})$, there exists a preimage of $\omega_0$ inside $D$. Let $\omega$ be this preimage, so $\omega \in D \cap \mathcal{J}(B_{r,\alpha})$, and let $k \ge 1$ be the smallest integer such that $B_{r,\alpha}^k(\omega) = \omega_0$.
 
    The curve $\gamma$ maps to a closed curve $\gamma' = B_{r,\alpha}^{k-1}(\gamma)$ inside the immediate basin $\mathcal{A}^*(z_0)$, and the point $\omega$ maps to $\omega' = B_{r,\alpha}^{k-1}(\omega)$. Note that $B_{r,\alpha}(\omega') = \omega_0$. Thus, $\omega'$ is a preimage of the bi-accessible point $\omega_0$, which implies $\omega' \in K$, with the set $K$ given in Lemma~\ref{lemma:path-connected-K}.
    
    By Lemma~\ref{lemma:path-connected-K}, there exists a path in $K$ connecting $\omega'$ to $\infty$. However, $\omega'$ lies in the interior of the curve $\gamma' \subset \mathcal{A}^*(z_0)$. Since the basin of attraction $\mathcal{A}^*(z_0)$ is disjoint from $K$ (which contains components of the basins of $0$ and $\infty$), the curve $\gamma'$ acts as an obstruction that the path in $K$ cannot cross.
    
    This results in a contradiction: $\omega'$ is supposed to be connected to infinity through $K$, yet it is trapped inside a bounded region $D'$ enclosed by $\gamma'$. Consequently, such a curve $\gamma$ cannot exist, implying that $\mathcal{A}^*(z_0)$ must be simply connected.
\end{proof}

\begin{theorem}
\label{thm:A-parabolica-simplemente-conexa}
    Consider $B_{r,\alpha}$ as a generalized Blaschke product, where $r \in (1, 2d+1)$, and both free critical points associated with $B_{r,\alpha}$ reside in the same Fatou component. Assume that the Julia set $\mathcal{J}(B_{r,\alpha})$ contains a bi-accessible point $\omega_0$. If $z_0 \in \mathbb{S}^1$ is a parabolic periodic point, then its immediate basin of attraction $\mathcal{A}^*(z_0)$ is simply connected. Consequently, every component of the basin of attraction $\mathcal{A}(z_0)$ is simply connected.
\end{theorem}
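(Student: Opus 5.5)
The plan is to run the same separation argument as in Theorem~\ref{thm:A-atractora-simplemente-conexa}, changing only the dynamical input that is specific to parabolic basins. Assume, for contradiction, that $\mathcal{A}^*(z_0)$ is not simply connected. Then there is a Jordan curve $\gamma\subset\mathcal{A}^*(z_0)$ that is not null-homotopic in $\mathcal{A}^*(z_0)$. Both complementary components of $\gamma$ then meet $\mathcal{J}(B_{r,\alpha})$.

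\emph{Choosing the side of $\gamma$.} The parabolic point $z_0\in\mathbb{S}^1$ lies in the Julia set, and $\mathcal{A}^*(z_0)$ is disjoint from the basins of $0$ and $\infty$, so $\gamma$ avoids $0$ and $\infty$. Let $D$ be a complementary component of $\gamma$ that meets $\mathcal{J}(B_{r,\alpha})$. If both components contain $0$ or $\infty$, one of them still contains Julia points that are not in $\overline{\mathcal{A}^*(0)}\cup\overline{\mathcal{A}^*(\infty)}$. I will therefore work with a component $D$ containing neither $\infty$ nor the closure of the component of $K$ through $\infty$; if needed, I will post-compose with $\mathcal{I}$ and use the symmetry.

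\emph{Trapping a preimage.} Density of backward orbits in $\mathcal{J}$ gives a preimage $\omega\in D$ of the bi-accessible point $\omega_0$, with minimal $k$ such that $B^k(\omega)=\omega_0$. Push forward by $B^{k-1}$. The image $\gamma'=B^{k-1}(\gamma)$ is a closed curve in $\mathcal{A}^*(z_0)$, which is forward invariant under the period, so the image stays in the cycle of immediate basins. The point $\omega'=B^{k-1}(\omega)$ is a preimage of $\omega_0$, hence lies in the set $K$ of Lemma~\ref{lemma:path-connected-K}. That lemma needs its hypotheses, namely $r\in(1,2d+1)$, both free critical points in one component, and the existence of $\omega_0$; all of these are assumed here. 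Its proof also uses that $\partial\mathcal{A}^*(\infty)$ is locally connected. This came from hyperbolicity, which fails in the parabolic case. I would instead invoke the general fact that a geometrically finite rational map with connected immediate basin has locally connected basin boundary. Alternatively, since Böttcher rays landing at $\omega_0$ are given by the definition of bi-accessibility, I can replace the curves $\gamma^\infty_{\omega_0}$, $\gamma^0_{\omega_0}$ by those rays and their pullbacks, which makes local connectivity unnecessary.

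\emph{Main obstacle.} The hard step is showing that $\omega'$ is enclosed by $\gamma'$ on the side away from $\infty$. The map $B^{k-1}$ need not be injective on $\gamma$, so $\gamma'$ need not be a Jordan curve, and $\omega'$ might escape the bounded region. My first attempt is to choose $\gamma$ as a boundary curve of a small neighborhood of a single Julia complementary piece inside $D$. I would then argue via the maximum principle/open mapping, applied to the component of $B^{-(k-1)}$ of the complement of the unbounded component of $\widehat{\mathbb{C}}\setminus\gamma'$. This shows that the image of $D$ under $B^{k-1}$ is contained in a bounded complementary component of $\gamma'$, or else $B^{k-1}(D)$ contains $\infty$. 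The second case forces $D$ to contain a preimage of $\infty$, namely $\infty$ or the pole $1/\bar a\in\mathbb{D}$. Both are in $K$ and connected to $\infty$ within $K$, and this contradicts $\gamma\subset\mathcal{A}^*(z_0)$ being disjoint from $K$, because $K$ is path connected and contains points on both sides of $\gamma$.

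Once $\omega'$ is trapped, the path in $K$ from $\omega'$ to $\infty$ has to cross $\gamma'\subset\mathcal{A}^*(z_0)$, which is disjoint from $K$. This contradiction shows $\mathcal{A}^*(z_0)$ is simply connected. For the non-immediate components, Riemann–Hurwitz applies: the free critical points lie in $\mathcal{A}^*(z_0)$, so every strictly preperiodic component of $\mathcal{A}(z_0)$ maps properly onto a simply connected component with no critical points, or only the critical points $a,1/\bar a$, whose orbits go to $0,\infty$ and so are absent. Such a map is therefore a covering of a disk, hence a conformal isomorphism. Induction on the preperiod finishes the proof.
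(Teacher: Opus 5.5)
Your route is the paper's: the separation argument of Theorem~\ref{thm:A-atractora-simplemente-conexa}, driven by the set $K$ of Lemma~\ref{lemma:path-connected-K}. You correctly spot two unjustified points in the paper. One is that $\omega'$ is enclosed by $\gamma'$. The other is that the lemma's hyperbolicity assumption fails for parabolic parameters. However, your repair of the first point does not work.

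First, the trapping alternative in your dichotomy can never occur. We have $\omega'\in K$, $K$ is path connected, contains $\infty$, and is disjoint from $\gamma'\subset\mathcal{A}(z_0)$. So $\omega'$ always lies in the complementary component of $\gamma'$ that contains $\infty$. Since $B^{k-1}(D)$ is open, contains $\omega'$, and has boundary in $\gamma'$, it contains that whole component, and in particular contains $\infty$. Everything therefore rests on your second case.

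There, the inference ``$\infty\in B^{k-1}(D)$ forces $D$ to contain $\infty$ or the pole $1/\bar a$'' fails once $k\ge 3$. You only get a point of
\[
B^{-(k-1)}(\infty)=\{\infty\}\cup\bigcup_{j=0}^{k-2}B^{-j}(1/\bar a)
\]
in $D$. For $j\ge 1$, such a point is the center of a component of $\mathcal{A}(\infty)$ of level $j+1\ge 2$. That point is not in $K$, and nothing you have proved joins it to $\infty$ off $\mathcal{A}(z_0)$. So no contradiction arises. Since $k$ comes from density of backward orbits, you cannot force $k\le 2$. Choosing $\gamma$ around ``a single Julia complementary piece'' does not change this.

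The missing idea is to arrange that only \emph{first} preimages of $\infty$ ever enter, since those are exactly what $K$ contains. Here is one way.
\begin{enumerate}
\item Take a simply connected absorbing domain $\Omega_0$: an attracting petal at $z_0$ whose boundary avoids the postcritical set.
\item Pull back one step at a time along the cycle of immediate basins: let $\Omega_n$ be the appropriate component of $B^{-1}(\Omega_{n-1})$. These exhaust the cycle of immediate basins, so a non-null-homotopic $\gamma$ lies in some $\Omega_n$.
\item Take $n$ minimal with $\Omega_n$ multiply connected; then $n\ge 1$.
\item Since $K$ is connected and disjoint from $\Omega_n$, some complementary component $E$ of $\Omega_n$ misses $K$.
\item The map $B\colon\Omega_n\to\Omega_{n-1}$ is proper onto a simply connected domain. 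By the open mapping argument near a non-critical boundary point, $B(E)\supset\widehat{\mathbb{C}}\setminus\overline{\Omega_{n-1}}\ni\infty$.
\item Hence $E$ contains a point of $B^{-1}(\infty)=\{\infty,1/\bar a\}\subset K$. This is the desired contradiction.
\end{enumerate}
Your Riemann--Hurwitz treatment of the non-immediate components is fine.
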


\begin{proof}
    We assume for the sake of contradiction that the immediate parabolic basin $\mathcal{A}^*(z_0)$ is not simply connected. Thus, there exists a simple closed curve $\gamma \subset \mathcal{A}^*(z_0)$ that is not null-homotopic within the basin. Let $D$ denote the bounded component of $\widehat{\mathbb{C}} \setminus \gamma$. Since the basin is multiply connected, $D$ must contain points belonging to the Julia set $\mathcal{J}(B_{r,\alpha})$.
    
    Given that $\omega_0$ is a bi-accessible point in the Julia set and the backward orbit of the Julia set is dense, we can find a preimage $\omega$ of $\omega_0$ such that $\omega \in D \cap \mathcal{J}(B_{r,\alpha})$. Let $k \ge 1$ be the smallest integer satisfying $B_{r,\alpha}^k(\omega) = \omega_0$.
    
    Applying the map, the curve $\gamma$ transforms into a closed curve $\gamma' = B_{r,\alpha}^{k-1}(\gamma)$ contained within the immediate basin $\mathcal{A}^*(z_0)$. Simultaneously, the point $\omega$ maps to $\omega' = B_{r,\alpha}^{k-1}(\omega)$, which satisfies $B_{r,\alpha}(\omega') = \omega_0$. This identifies $\omega'$ as a preimage of the bi-accessible point, meaning $\omega' \in K$, where $K$ is the path-connected set defined in Lemma~\ref{lemma:path-connected-K}.
    
    According to Lemma~\ref{lemma:path-connected-K}, there is a path within $K$ linking $\omega'$ to $\infty$. However, $\omega'$ is situated in the interior of $\gamma' \subset \mathcal{A}^*(z_0)$. As the parabolic basin $\mathcal{A}^*(z_0)$ is disjoint from $K$, the boundary curve $\gamma'$ serves as a topological barrier that separates $\omega'$ from the unbounded component of the complement of the basin.
    
    This leads to a contradiction: $\omega'$ must be connected to infinity via $K$, but it is enclosed by $\gamma'$. Therefore, no such non-contractible curve $\gamma$ can exist, and $\mathcal{A}^*(z_0)$ is simply connected.
\end{proof}

\subsection{Characterization of the Connectedness of Julia Sets}

We have established the necessary tools to characterize the connectedness of the Julia set for the Blaschke family. By combining the classification of Fatou components with the dynamical properties proven in previous sections (the non-existence of Siegel disks in Theorem~\ref{thm:rotation-domains} and the existence of bi-accessible cycles within the tongues in Theorem~\ref{thm:biaccessibility}) we determine exactly when the Julia set fails to be connected.

\vspace{0.5cm}

\noindent
{\small {\bf Theorem B} \textit{Let $d \in \mathbb{N}$ and consider the Blaschke family $B_{r,\alpha}$. For any parameters $r > 0$ and $\alpha \in [0,1)$, the Julia set $\mathcal{J}\left(B_{r,\alpha }\right)$ is connected if, and only if, the Fatou set $\mathcal{F}\left(B_{r,\alpha }\right)$ does not contain Herman rings.}}

\begin{proof}
    If the Julia set is connected, then all Fatou components are simply connected. Since Herman rings are doubly connected domains, they cannot exist.

    Conversely, assume that $B_{r,\alpha}$ has no Herman rings. By Theorem~\ref{thm:rotation-domains}, we know that the family does not possess Siegel disks. We demonstrate that all remaining Fatou components are simply connected by analyzing the parameter space regions established before. 

    \begin{enumerate}
        \item \textbf{The unit disk ($r \leq1$):} 
        From Lemma~\ref{lemma:Julia-is-S1-a-less-1}, the Julia set is the unit circle $\mathbb{S}^1$. Since the circle is a connected set, the result holds.

        \item \textbf{The homeomorphism/diffeomorphism region ($r \ge 2d+1$):} 
        In this region, the map is a diffeomorphism or homeomorphism on $\mathbb{S}^1$. If there are no Herman rings, then every periodic Fatou component is simply connected. Hence, the Julia set is connected.

        \item \textbf{The endomorphism region ($1 < r < 2d+1$):} 
        In this endomorphism region, we consider the possible types of Fatou components:
        \begin{itemize}
            \item The basins of attraction for $0$ and $\infty$ are simply connected by Proposition~\ref{thm:super-attracting-basins}.
            \item If there is a Fatou component $U$ that is not $\mathcal{A}(0)$ or $\mathcal{A}(\infty)$, it must be an attracting or parabolic basin associated with a cycle on $\mathbb{S}^1$. This implies that the parameter belongs to a tongue $T_{p/q}$.
            
            We distinguish two sub-cases for $U$:
            \begin{enumerate}
                \item If $U$ contains at most one free critical point, the map $B\colon U \to B(U)$ is proper of degree at most 2. The Riemann-Hurwitz formula implies that $m_U - 2 = 2(m_U - 2) + 1$, which gives $m_U = 1$, so $U$ is simply connected.
                \item If $U$ contains both free critical points, it corresponds to an adjacent parameter in the tongue. By Theorem~\ref{thm:biaccessibility}, the map possesses a bi-accessible cycle. Consequently, by Theorems \ref{thm:A-atractora-simplemente-conexa} and \ref{thm:A-parabolica-simplemente-conexa}, the immediate basin $U$ is simply connected.
            \end{enumerate}
        \end{itemize}
    \end{enumerate}
    In all cases, every Fatou component is simply connected. Therefore, the Julia set $\mathcal{J}(B_{r,\alpha})$ is connected.
\end{proof}

\subsection*{Acknowledgments}
Parts of this article have been drawn from the first author's PhD thesis, under the advise of the second author. The first author was supported by a SECIHTI (formerly CONACyT) grant No. 781230 during her graduate studies, as well as from the Sofia Kovalevskaia Award in 2022.

\printbibliography
\end{document}